\newtheorem{theorem}{Theorem}
\newtheorem{lemma}{Lemma}
\m@th\displaystyle{##}\m@th\displaystyle{##}\hfil}
\m@th\displaystyle{##}\m@th\displaystyle{##}\hfil}{\lbrace}{.}
\newcommand{\commentout}[1]{}
\DeclareMathOperator*{\argmin}{arg\,min}
\newtheorem{remark}{Remark}[section]
\theoremstyle{plain}
\theoremstyle{definition}
\newcommand{\bc}{\mathbf{c}}
\newcommand{\bx}{\mathbf{x}}
\newlength{\dhatheight}
\begin{document}
	
	\title{WG-IDENT: Weak Group Identification of PDEs with Varying Coefficients}
	\author{
 Cheng Tang\thanks{Department of Mathematics, Hong Kong Baptist University, Kowloon Tong, Hong Kong. Email: 22481184@life.hkbu.edu.hk. }, 
 Roy Y. He\thanks{Department of Mathematics, City University of Hong Kong, Tat Chee Avenue, Kowloon, Hong Kong.
	Email: royhe2@cityu.edu.hk.},  
 Hao Liu\thanks{Department of Mathematics, Hong Kong Baptist University, Kowloon Tong, Hong Kong.
		Email: haoliu@hkbu.edu.hk.}
  }
	
	\date{}
	\maketitle

	\begin{abstract}
The identification of Partial Differential Equations (PDEs) has emerged as a prominent data-driven approach for mathematical modeling and has attracted considerable attention in recent years. The stability and precision in identifying PDE from heavily noisy spatiotemporal data present significant difficulties. This problem becomes even more complex when the coefficients of the PDEs are subject to spatial variation. In this paper, we propose a \textbf{W}eak formulation of \textbf{G}roup-sparsity-based framework for \textbf{IDENT}ifying PDEs with varying coefficients, called \textbf{WG-IDENT}, to tackle this challenge. Our approach utilizes the weak formulation of PDEs to reduce the impact of noise.
We represent test functions and unknown PDE coefficients using B-splines, where the knot vectors of test functions are optimally selected based on spectral analysis of the noisy data. To facilitate feature selection, we propose to integrate group sparse regression with a newly designed group feature trimming technique, called GF-Trim, to eliminate unimportant features. Extensive and comparative ablation studies are conducted to validate our proposed method.
The proposed method not only demonstrates greater robustness to high noise levels compared to state-of-the-art algorithms but also achieves superior performance while exhibiting reduced sensitivity to hyperparameter selection.

	\end{abstract}

\section{Introduction}
The use of differential equations to model empirical observations provides a fundamental framework for understanding the governing principles of complex systems across diverse fields, including physics, environmental science, and biomedical engineering. Advances in data acquisition technologies have led to the availability of vast amounts of data. This has facilitated the emergence of data-driven methods for model discovery, which complement classical modeling approaches. Various algorithms based on symbolic regression~\cite{JBongard_symbolic,Schaeffer_sparse_reg} and sparse-regression~\cite{Discoveringgoverning_sparsereg,Schaeffer2017LearningPD_sparsereg,Data_drivenPDE_sparsereg,IDENT_sparsereg,he2022robust, Messenger_2021_sparse_reg, Rudy_sparse_reg, tang2022weakident,  WU2019200,  Messenger_2021} have been proposed  under diverse settings. More recent work also involves deep learning techniques~\cite{long2018pdenet,long2019pde}. Most of them~\cite{Qin_2019, chen2025duedeeplearningframework, NEURIPS2023_70518ea4,chen2022deep,churchill2025principal,wu2020data} focus on learning the underlying evolution operators (flow maps) directly from data, providing powerful surrogate models but without yielding explicit PDE structures. For instance, regression-based approaches for identifying partial differential equations (PDEs) often start with a generic form
\begin{equation} \label{pde_form}
\partial_t u = F(\bx, \partial_\bx u, \partial^2_\bx u,\dots),
\end{equation}
where the right-hand side represents a combination of terms from a predefined feature dictionary. The dictionary contains various spatial derivatives and nonlinear combinations that are hypothesized to be relevant to the system. The function $F$ represents the governing equation of the system and is assumed to be a linear combination of a small subset of the dictionary. This reflects the general principle that physically meaningful PDEs are typically sparse and involve only smooth functional forms, which both enhances interpretability and reduces the risk of overfitting. Consequently, the objective of PDE identification is to identify a sparse vector of coefficients that determine the contribution of each feature in the dictionary to the overall equation.

One of the primary difficulties in PDE identification arises from \textbf{noise amplification} during numerical differentiation, as illustrated in Figure~\ref{fig:noise_differentiation}. Numerical differentiation magnifies the noise in the original signal, rendering the derivative of the noisy signal considerably more erratic and less reliable than that of the clean signal. In many regression-based methods~\cite{he2022robust,he2023group,IDENT_sparsereg,Li_Sun_Zhao_Lehman_2020,Data_drivenPDE_sparsereg, Rudy_sparse_reg,Schaeffer2017LearningPD_sparsereg}, partial derivatives of the observed trajectories must be approximated to construct a dictionary of candidate features.  Since the identification is highly sensitive to approximation errors in these features, various techniques—such as  total variation~\cite{Schaeffer2017LearningPD_sparsereg},  local polynomial fitting~\cite{Data_drivenPDE_sparsereg, Rudy_sparse_reg}, least-squares moving average~\cite{IDENT_sparsereg}, and Successively Denoised Differentiation (SDD)~\cite{he2022robust}—have been proposed to enhance robustness. In general, achieving accurate estimates of partial derivatives requires striking a careful balance between the sampling frequency and the shapes of the smoothing kernels~\cite{he2022asymptotic}. 

\begin{figure}
    \centering
    \includegraphics[width=0.6\textwidth]{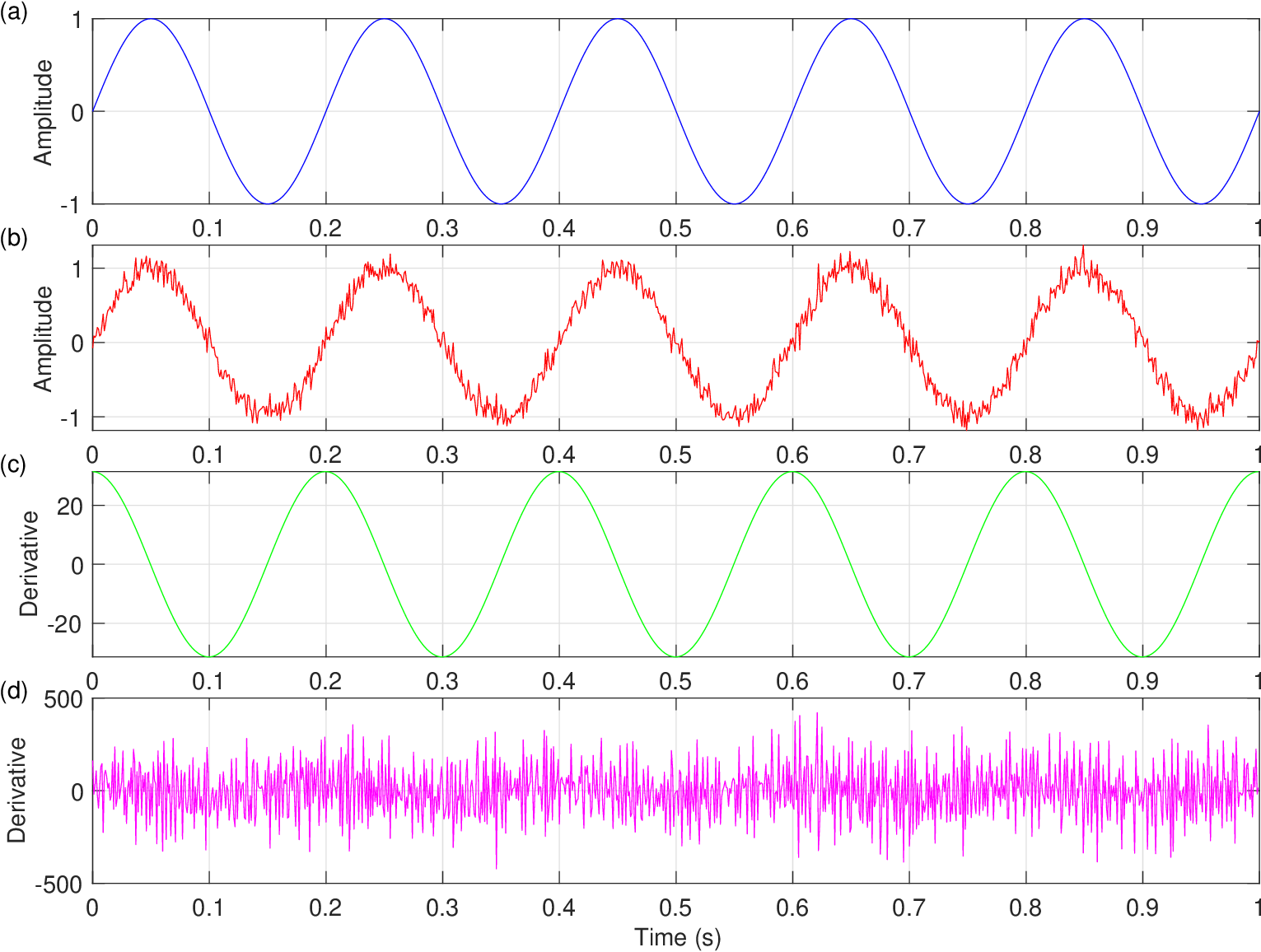}
    \caption{Demonstration of noise amplification in numerical differentiation. (a) Clean sinusoidal signal; (b) Noisy signal obtained by adding to (a) independent Gaussian noise of standard deviation 0.1; (c) Numerical derivative of (a); (d) Numerical derivative of (b). Here the derivatives are estimated via forward difference approximation. }
    \label{fig:noise_differentiation}
\end{figure}

The second difficulty arises because the observed dynamics often occur in \textbf{non-uniform environments}. For instance, in biological aggregation phenomena, organisms such as cells or animals interact through nonlocal potentials while moving in heterogeneous habitats with varying resources or obstacles~\cite{mogilner1999nonlocal,pmlrv178yao22a,he2022numerical}. In physical systems, interactions among particles can be influenced by external fields or medium heterogeneities~\cite{feng2024data,carrillo2024sparse}. As a result, the problem of identifying PDE coefficients becomes an infinite-dimensional one. Common strategies for reducing it to a finite-dimensional setting include finite element expansions~\cite{IDENT_sparsereg,Rudy_sparse_reg}, B-spline approximations~\cite{he2023group}, and implicit neural representations~\cite{long2018pdenet,long2019pde}. These methods can enhance the adaptability of the identification framework; however, the adverse effect of noise becomes more pronounced due to the increased degrees of freedom.

To address the first challenge of noise amplification in numerical differentiation, the weak formulation provides an effective framework~\cite{Messenger_2021, Messenger_2021_sparse_reg,tang2022weakident,Reinbold2019UsingNO}. More specifically, weak-form PDE identification in the  \textbf{constant-coefficient setting} seeks to identify equations of the form:
\begin{equation}  
\partial_t u = a_1 \partial_{\bx}^{\alpha_1} f_1(u) + a_2 \partial_{\bx}^{\alpha_2} f_2(u) + \cdots + a_K \partial_{\bx}^{\alpha_K} f_K(u),  
\label{eq_model_general_divergence_form_constantcoef}  
\end{equation}
where \(u: \Omega \times [0,T] \to \mathbb{R}\) is the observed function, \(a_1\ldots a_K\) represent the constant coefficients, and \(\partial_{\bx}^{\alpha_k}\) denotes the partial derivative associated with a multi-index \(\alpha_k\). Each \(f_k\) encodes a possible nonlinear function of \(u\). \(K\) denotes the total number of candidate features in the dictionary. By integrating the equation against smooth test functions over localized spatiotemporal domains, the weak form avoids direct computation of high-order derivatives from noisy data. Multiplying both sides of the PDE by compactly supported test functions and applying integration by parts, which transfers derivatives onto the test functions, avoids direct numerical differentiation from noisy measurements. This transformation inherently acts as a low-pass filter, suppressing high-frequency noise while preserving the essential dynamics. Moreover, the spectral properties of the noisy data can be exploited to optimize the choice of test functions for noise suppression. Previous studies~\cite{Messenger_2021, Messenger_2021_sparse_reg,tang2022weakident,Reinbold2019UsingNO} have demonstrated the effectiveness of weak-form-based approaches for constructing overdetermined linear systems from sparse measurements across multiple test functions. This methodology enables robust identification of PDEs from noisy data and allows stable coefficient recovery even in the presence of significant noise.

Despite these successes in constant-coefficient settings, a critical challenge remains in extending this framework to heterogeneous systems with spatially varying coefficients. The infinite-dimensional nature of coefficient functions poses both computational and theoretical difficulties. Existing methods are not readily applicable to the identification of varying coefficients, particularly when the data are further corrupted by noise. This gap motivates the development of novel weak-form strategies designed for heterogeneous systems.

To address this gap, in this paper, we focus on identifying PDEs with \textbf{spatially varying coefficients}, which can be expressed in divergence form as
\begin{equation}  
\partial_t u = c_1(\bx) \partial_{\bx}^{\alpha_1} f_1(u) + c_2(\bx) \partial_{\bx}^{\alpha_2} f_2(u) + \cdots + c_K(\bx) \partial_{\bx}^{\alpha_K} f_K(u),  
\label{eq_model_general_divergence_form}  
\end{equation}  
where \(\bx \in \mathbb{R}^d\) and  \(c_k: \Omega \to \mathbb{R}\) denote spatially varying coefficients. 
This formulation encompasses a wide range of PDEs, including those with multiple nonlinear terms and high-order derivatives. For instance, the one-dimensional viscous Burgers' equation with varying coefficients can be recovered by setting \(f_1(z)=z\), \(f_2(z)=z^2\), and selecting appropriate partial derivatives (e.g., \(\partial_x\), \(\partial_x^2\)), with other coefficients set to zero.  A feature is considered \emph{active} if its corresponding coefficient \(c_k(\bx)\) is non-zero for some \(\bx \in \Omega\).

To avoid noise amplification during numerical differentiation, the weak-form approach multiplies both sides of equation~\eqref{eq_model_general_divergence_form} by a compactly supported smooth test function \( \varphi \in C_c(\Omega \times (0, T)) \) and then integrates to obtain 
\begin{equation}  
\int_{\Omega \times (0, T)} \partial_t u(\bx,t)\cdot \varphi(\bx,t) \, d\bx \, dt = \sum_{k=1}^K \int_{\Omega \times (0, T)} c_k(\bx)\partial_{\bx}^{\alpha_k} f_k(u(\bx,t))  \varphi(\bx,t) \, d\bx \, dt.  
\end{equation}  
Integration by parts leads to the weak formulation:  
\begin{equation}  
-\langle \partial_t \varphi, u \rangle = \sum_{k=1}^K (-1)^{|\alpha_k|} \left\langle \partial_{\bx}^{\alpha_k} (\varphi c_k), f_k(u) \right\rangle,
\label{eq_model_general_weak}  
\end{equation}  
where $|\alpha_k|$ denotes the order (multi-index length) of the derivative operator $\partial_{\bx}^{\alpha_k}$. With properly chosen test functions \(\varphi\), this weak formulation effectively suppresses high-frequency noise. The denoising effect is achieved by integrating shifted versions of \(\varphi\), which effectively acts as a convolution under periodic boundary conditions~\cite{Messenger_2021,tang2022weakident}. 

In this paper, we propose a \textbf{W}eak formulation of \textbf{G}roup-sparsity-based framework for \textbf{IDENT}ifying PDEs with spatially varying coefficients,  named \textbf{WG-IDENT}. Specifically, our WG-IDENT employs to use B-splines~\cite{schumaker2007spline,unser1993b1,unser1993b2} as both the test functions and the bases for approximating the unknown varying coefficients, which is in contrast to previous works that mainly focused on PDEs with constant coefficients. Existing approaches~\cite{Messenger_2021, tang2022weakident}, typically employ truncated polynomial test functions due to their simple construction. Unlike truncated polynomial test functions, the B-spline test functions adopted in our framework form a partition of unity, thereby providing consistent weighting across the domain and improved stability in noisy settings. Additionally, It is well known that smooth varying coefficients can be approximated to arbitrary accuracy by uniform B-splines, similar in spirit to the B-spline Galerkin finite element method~\cite{fischer2009b,iqbal2020cubic,gorgulu2018numerical} where spline bases are employed to approximate solutions. Such a reduction from an infinite-dimensional problem to a finite-dimensional one allows us to cast the task of PDE identification as a group-sparse feature selection problem~\cite{he2023group,he2024groupprojectedsubspacepursuit}. Meanwhile, we note that~\eqref{eq_model_general_weak} is equivalent to passing the data and the derived features through a low-pass filter determined by the test function~\cite{Messenger_2021,tang2023fourier,he2023much}. The spectral decay of uniform B-splines is closely related~\cite{Neuman} to the order of the basis functions, the knot spacing, and the number of knots. We design an effective scheme (See Section~\ref{subsec:testfnc}) to adaptively construct an optimal set of test functions to suppress high-frequency noise.

Our method generates a pool of candidate PDEs using Group Projected Subspace Pursuit (GPSP)~\cite{he2023group,he2024groupprojectedsubspacepursuit}. To refine this pool, we introduce a technique called Group Feature Trimming (GF-Trim) to facilitate feature selection. GF-Trim extends the trimming strategy of~\cite{tang2022weakident} to accommodate PDEs with varying coefficients. This technique effectively eliminates unimportant group features, thereby enhancing the identifiability of true features.

Our contributions are summarized as follows:

\begin{enumerate}
    \item We propose \textbf{WG-IDENT}, a novel Weak-form-based Group-sparsity framework for identifying PDEs with varying coefficients.  Our method successfully identifies a broad class of PDEs even in the presence of substantial noise, outperforming existing methods.
    \item  We derive an adaptive scheme for selecting the support of the test functions based on the spectral analysis of noisy data. Numerical experiments show that our approach significantly improves robustness in identification.
    \item We develop a Group Feature Trimming (GF-Trim) technique to eliminate unimportant group features from the candidate PDEs, enhancing the efficiency and stability of identification.
    \item We conduct comprehensive numerical experiments, including ablation studies and comparisons with state-of-the-art algorithms, to demonstrate the effectiveness of our method.
\end{enumerate}

This paper is organized as follows. In Section \ref{sec:proposed_algorithm}, we formulate the problem in weak form as a group-sparse problem. In Section \ref{sec:alg.overview}, we introduce the proposed method, WG-IDENT, for identifying PDEs with varying coefficients, which includes candidate PDE generation (Section~\ref{sec:candidate_gen}), group feature trimming for feature refinement (Section~\ref{sec:G_trim}) and residual-based model selection (Section~\ref{sec:RR}). Implementation details are discussed in Section~\ref{sec:implementation_details}. In Section~\ref{sec:numerical_experiments}, we demonstrate the effectiveness of the proposed method by comprehensive numerical experiments. Finally, Section~\ref{sec:conclusion} concludes the paper.

\section{Problem formulation} \label{sec:proposed_algorithm}

We discuss the formulation of PDE identification.
For simplicity, we present the settings for one-dimensional problem. Extension to higher dimensions is straightforward. 

Consider a space-time domain $\Omega\times [0,T]$ where $\Omega = [L_1,L_2]$ with $0\leq L_1<L_2$ and $T>0$. We focus on evolutionary PDEs of form~\eqref{pde_form} that can be expressed as a linear combination:
\begin{align}
		\partial_t u(x,t) = \sum_{k=1}^K c_k(x)F_k(u), ~\text{with} ~F_k = \frac{\partial^{\alpha_k}}{\partial x^{\alpha_k}}f_k(u)\in L^\infty(\Omega\times [0,T]),
  \label{eq_ori}
	\end{align}
 where $\alpha_k\in\{0,1,\ldots,\alpha_\text{max}\}$ denotes the order of spatial derivatives, with $\alpha_{\text{max}}$ being the highest order. The functions $f_k$ are smooth (e.g., constants, polynomials), and $c_k(x)$ defined on $[L_1,L_2]$ represents spatially varying PDE coefficients. We assume that (\ref{eq_ori}) satisfies the periodic boundary condition in space, and the right-hand side is sparse, i.e., $c_k$ is constant zero for most $k$.

Using 
$N^x+1$ spatial grid points and $N^t+1$ temporal grid points, we define step sizes $\Delta x := \frac{L_2-L_1}{N^x}$ and $\Delta t := \frac{T}{N^t}$, respectively. The given data (discretized PDE solution with noise) is denoted by
\begin{align}
\mathcal{D}=\{U_i^n = u(x_i,t^n)+\varepsilon_i^n, i = 0, 1,\ldots,N^x, n = 0, 1,\ldots,N^t\}\label{dataset},
\end{align}
where $x_i=i\Delta x \in [L_1,L_2] $ and $t^n=n\Delta t\in [0,T]$; $\varepsilon_i^n$ represents independent zero-mean noise; and $u(x_i,t^n)$ represents the clean data, for $i = 0, 1,\ldots,N^x,$ and $ n = 0, 1,\ldots,N^t$.

Our objective is to identify the expression of (\ref{eq_ori}) from the dataset $\mathcal{D}$ by finding the correct terms from the dictionary $\{F_k(u)\}_{k=1}^K$ with the corresponding varying coefficients. In our method, we combine the strengths of B-spline functions with the Galerkin method, employing B-splines as both test functions and bases for representing varying coefficients in PDEs discovery. 

\subsection{Proposed basis and test functions}\label{sec:proposed_basisNtestfnc}
Our approach employs B-splines in two key roles. Firstly, for approximating varying coefficients, each $c_k(x)$ in~\eqref{eq_ori} is represented as a linear combination of B-spline basis functions $\{\psi_m(x)\}_{m=1}^M$:
\begin{align}\label{eq_varyingcoef_appr}
    c_k(x)\approx\widetilde{c}_k(x) =\sum_{m=1}^M c_{k,m} \psi_m(x).
\end{align}
where $c_{k,m}\in\mathbb{R}$ is the $m$-th coefficient in the basis expansion approximation for the $k$-th feature. 
Secondly, we construct test functions $\{\varphi_r\}_{r=1}^S$ as tensor products of B-splines to capture both spatial and temporal variations:

\begin{equation}\label{eq_testfnc_tensor}
\varphi_r(x,t) = B_{r_x}(x)B_{r_t}(t), \quad r=1,\dots,S,
\end{equation}
where $B_{r_x}(x)$ and $B_{r_t}(t)$ represent B-spline basis functions in space and time, respectively.

The effectiveness of this framework depends on careful parameter selection, particularly the number and order of B-spline bases \cite{799930}. While the number of basis functions influences approximation accuracy\cite{HUGHES20054135}, our numerical experiments demonstrate that identification results remain stable across a range of number of basis (detailed in \ref{Appendix:numberofbasis}). For test functions, we develop a comprehensive parameter selection strategy (Section \ref{subsec:testfnc}) that optimizes their filtering properties, effectively capturing signal-dominant regions while suppressing noise.

To ensure the validity of our coefficient approximation approach, we recall a fundamental property of well-posed PDEs: small perturbations in the coefficients lead to proportionally small errors in the solution. Specifically, when the underlying PDE is well-posed (i.e., it admits a unique solution that depends continuously on coefficients and initial data), sufficiently accurate approximations of spatially varying coefficients via B-splines guarantee that the resulting PDE solution remains close to the original unperturbed solution.

\subsection{PDE Identification as group sparse regression}\label{sec:group_sparse_regression}

The objective of PDE identification is to determine a sparse set of coefficients that can interpret and represent the dynamics of observed data. If we want to identify a PDE with sparsity $\theta$, based on the weak formulation (\ref{eq_model_general_weak}) and incorporating the varying-coefficient approximation~\eqref{eq_varyingcoef_appr} with B-spline test functions~\eqref{eq_testfnc_tensor}, we can formulate  the identification problem as the following group sparse regression problem:
\begin{equation}\label{eq_regression_model}
\begin{aligned} 
\bc^\theta\in\arg\min_{c_{k,m}}&\sum_{r=1}^S\left(\langle \partial_t\varphi_r,u\rangle + \sum_{k=1}^K(-1)^{|\alpha_k|}\left\langle \partial_{\bx}^{\alpha_k}\left(\varphi_r\sum_{m=1}^M c_{k,m}\psi_m\right), f_k(u)\right\rangle\right)^2\\
\text{s.t.}&\quad\#\{\|\bc_k\|_1\neq 0\}=\theta.
\end{aligned}
\end{equation}
In~\eqref{eq_regression_model}, $\langle\cdot,\cdot\rangle$ denotes the inner product in $L^2(\Omega \times [0,T])$, $c_{k,m} \in \mathbb{R}$ are the sparse coefficients to be estimated, and $\partial_t$ and $\partial_{\bx}^{\alpha_k}$ represent temporal and spatial derivatives, respectively. The sparsity parameter $\theta$ serves as a critical constraint, explicitly limiting the number of non-zero coefficient groups. This formulation allows for a flexible representation of the PDE while maintaining sparsity, which is crucial for identifying the underlying structure of the equation. The non-zero groups of $\mathbf{\mathbf{c}}^\theta$ determines which terms to be included in the corresponding PDE model; thus each solution of~\eqref{eq_regression_model} corresponding to different $\theta$ yields a candidate PDE model.

\section{Proposed  method: WG-IDENT}
\label{sec:alg.overview}
In this section, we introduce our \textbf{W}eak formulation of \textbf{G}roup-sparsity based framework \textbf{IDENT}ification (\textbf{WG-IDENT}) method. 
Our method consists of three steps: generating candidate PDEs with different sparsity levels, refining these candidates, and selecting the optimal PDE.

\noindent \textbf{Step 1: [Generate the candidate PDEs]} (Section~\ref{sec:candidate_gen}) For each sparsity level $\theta=1,...,K,$ where $K$ is the total number of group features, we generate a candidate PDE by solving the group sparse regression problem~\eqref{eq_regression_model} using the Group Projected Subspace Pursuit Algorithm (GPSP)~\cite{he2023group}. \\
\noindent \textbf{Step 2: [Refine the candidate PDEs]} (Section~\ref{sec:G_trim}) For each sparsity level $\theta$, Step 1 gives a group sparse vector. We employ a new technique, Group Feature Trimming (GF-Trim), to rule out unimportant features.\\
\noindent \textbf{Step 3: [Select the optimal PDE]} (Section~\ref{sec:RR}) We use the Reduction in Residual (RR) criterion to assess each trimmed candidate PDE, select the optimal sparsity level $\theta^*$, and recover the corresponding coefficients using least squares regression.

In the following subsections, we discuss each step of the proposed algorithm in details.

\subsection{Candidate generation} \label{sec:candidate_gen}
To facilitate algorithm presentation, we reformulate equation \eqref{eq_regression_model} as a linear system using matrix notation. Let $\phi_{k,m}^{r}=(-1)^{|\alpha_k|}\left\langle \partial_{\bx}^{\alpha_k}\left(\varphi_r\psi_m\right), f_k(u)\right\rangle$, and define matrices:
	\begin{align} \label{linearsystem_mtx}
		\mathbf{F}&=
		\begin{bmatrix}
			\phi^{1}_{1,1}&\phi^{1}_{1,2}&\cdots&\phi^{1}_{1,M}&\phi^{1}_{2,1}&\cdots&\phi^{1}_{K,M}\\
			\phi^{2}_{1,1}&\phi^{2}_{1,2}&\cdots&\phi^{2}_{1,M}&\phi^{2}_{2,1}&\cdots&\phi^{2}_{K,M}\\
			\vdots&\vdots&\vdots&\vdots&\vdots&\vdots&\vdots\\
			\phi^{S}_{1,1}&\phi^{S}_{1,2}&\cdots&\phi^{S}_{1,M}&\phi^{S}_{2,1}&\cdots&\phi^{S}_{K,M}\\
		\end{bmatrix}\in\mathbb{R}^{S\times KM},\\
	 \mathbf{b} &=
		\begin{bmatrix}
			-\langle \partial_t \varphi_{1},u \rangle\\
			\vdots\\
			-\langle \partial_t\varphi_{S},u\rangle
		\end{bmatrix}\in\mathbb{R}^{S}, \quad 
		\mathbf{c} =
		\begin{bmatrix} \label{linearsystem_c}
			c_{1,0}\\
			c_{1,1}\\
			\vdots\\
			c_{1,M}\\
			c_{2,0}\\
			\vdots\\
			c_{K,M}
		\end{bmatrix}\in\mathbb{R}^{KM}.
	\end{align}
Here, $\mathbf{F}$ is the feature matrix that represents the complete dictionary matrix which incorporates the weak formulation of all feature groups; $\mathbf{b}$ is the feature response that represents the vector of the weak formulation of $\partial_tu$; and $\mathbf{c}$ includes all the coefficients corresponding to the feature groups, concatenated together. The group-sparse regression problem \eqref{eq_regression_model} can be rewritten as

\begin{align}
\bc^{\theta}=\argmin_{\substack{\bc\in\mathbb{R}^{KM},\ \|\mathbf{c}\|_{\ell_{1,0}}=\theta}} \|\mathbf{F}\bc - \mathbf{b}\|_2^2,
\label{group_sparse_reg_linearsystem}
\end{align}
where $\|\mathbf{c}\|_{\ell_{1,0}}=\left\|\left[\|\mathbf{c}_{1:M}\|_1,...,\|\mathbf{c}_{((K-1)M+1):(KM)}\|_1\right]^{\top}\right\|_0$ counts the number of non-zero groups in $\mathbf{c}$. Here, the vector is partitioned into \(K\) groups, each comprising $M$ consecutive columns. Specifically, the first group is denoted as \(\bc_{1:M}\), the second group as \(\bc_{M+1:2M}\), and so forth, with the final group being \(\bc_{(K-1)M+1:(KM)}\).

Details of GPSP are shown in
Algorithm~\ref{SPalgo} (\ref{appendixalg}). As highlighted in~\cite{he2023group}, this sparsity-constrained framework effectively avoids generating redundant candidates, making it more efficient than approaches based on sparsity-inducing regularizations.

\subsection{Group feature trimming for feature refinement} \label{sec:G_trim}
Step 1 generates a pool of candidate PDEs with distinct sparsity.
Each candidate PDE comprises a number of group features corresponding to its sparsity level \(\theta\). However, some PDEs may include  features that contribute marginally for describing \( \mathbf{b} \), especially when \( \theta \) exceeds the true underlying sparsity \(\theta^*\). We refer to these features as low-contribution features, which typically do not exist in the actual PDE. Notably, even if the estimated coefficients of a feature group have large magnitudes, the product between the estimated coefficients and its corresponding features may remain small, indicating that the selected group features may be fitting noise rather than the true signal (see~\cite[Example 2]{he2024groupprojectedsubspacepursuit}). 

To mitigate the influence of low-contribution features, we propose Group Feature Trimming (GF-Trim)—a systematic approach for eliminating feature groups with negligible impact on the regression model. This technique is inspired by the trimming method introduced in~\cite{tang2022weakident}. Based on equations~\eqref{linearsystem_mtx} and~\eqref{linearsystem_c}, we define the feature matrix for the selected feature groups as \(\mathbf{F}^\theta \in \mathbb{R}^{S\times \theta M}\) and the corresponding estimated coefficients as \(\mathbf{c}^\theta \in \mathbb{R}^{\theta M}\). For each selected feature group \(v \in \{1,\ldots,\theta\}\), we denote the corresponding feature matrix by \(\mathbf{F}^{\theta}_v \in \mathbb{R}^{S\times M}\) and the coefficient vector by \(\mathbf{c}^{\theta}_v \in \mathbb{R}^M\).

We define a contribution score \( \chi^{\theta}_v \) for the \( v \)-th selected feature group in the candidate model containing \(\theta\) terms as
\begin{align}
\chi^{\theta}_v = \frac{\alpha^{\theta}_v}{\max_{1 \leq v' \leq \theta} \alpha^{\theta}_{v'}} \quad \text{with} \quad \alpha^{\theta}_v = \| \mathbf{F}^{\theta}_v \mathbf{c}^{\theta}_v \|_2, \quad v = 1, 2, \ldots, \theta. \label{eq_score}
\end{align}
The quantity \( \alpha^{\theta}_v \) measures the contribution of the \( v \)-th feature group towards approximating the target vector \( \mathbf{b} \) in the candidate model determined by the group sparsity $\theta$, and \( \chi^{\theta}_v \) represents the normalized contribution. For a fixed threshold \(\tau > 0\), we compare the contribution scores \( \chi^{\theta}_v \) for each selected feature group and remove any feature group for which \( \chi^{\theta}_v < \tau \). After trimming, the coefficients of the remaining feature groups are re-computed. We apply this procedure to each sparsity level to generate a pool of refined PDE candidates. Compared to the conventional column-wise trimming strategies, the proposed GF-Trim operates at the group level, ensuring that the  feature groups with limited contributions are systematically removed. This prevents relevant coefficients within a group from being mistakenly discarded, and improves the overall stability of model selection. As will be demonstrated later in Section~\ref{eq_gp_trim_contribution}, GF-Trim yields more reliable identification results under noisy conditions by providing a wider valid range for the residual-reduction criterion.

\subsection{Model selection via residuals}\label{sec:RR}
To select the optimal PDE from the refined pool of candidates, we adopt the Reduction in Residual (RR) criterion \cite{he2023group}.
For each sparsity $\theta$, we compute the residual for each candidate PDE by:
        \begin{align*}
        q^\theta=||\mathbf{F}^\theta\mathbf{c}^\theta-\mathbf{b}^\theta||_2^2.
	\end{align*}
 Let $L$, $1\leq L<K$, be an integer to be specified. The Reduction in Residual is then calculated as:
 \begin{align}
        s^\theta=\frac{q^\theta-q^{\theta+L}}{Lq^1}, ~\theta = 1,\ldots,K-L.
	\end{align}
 This metric quantifies the average reduction in residual error as the sparsity level $\theta$ increases. A positive value of $s^\theta$ indicates an improvement in accuracy with increasing sparsity. However, it is important to note that the feature index set selected at sparsity level $\theta$ may not necessarily be a subset of that at $\theta+1$. Hence, $q^\theta-q^{\theta+1}$ may not always be positive. When $s^\theta$ becomes small, we opt for the smallest sparsity level $\theta$, instead of selecting the $\theta$ with the smallest $s^\theta$ value. This decision is made to avoid overfitting to noise, ensuring that we do not select a sparsity level that may include irrelevant features. 
 
To formalize our selection process, we introduce a threshold $\rho^\text{R}>0$ and define the optimal sparsity level $\theta^*$ as follows:
        \begin{align*}
        \theta^* = \min\{\theta: 1\leq \theta \leq K-L, s^\theta<\rho^{\text{R}} \}.
	\end{align*}
 The choice of the threshold $\rho^R$ significantly impacts the determination of the optimal sparsity level \(\theta^*\). If $\rho^R$  is set too high, we risk selecting a very low sparsity level, potentially omitting important features. Conversely, if it is set too low, we may include irrelevant features in our selection. We find that the Group Feature Trimming process introduced in Section~\ref{sec:G_trim} enhances the robustness of our method, allowing for a broader valid range of $\rho^{\text{R}}$. This is particularly beneficial when dealing with a large feature dictionary, as it facilitates the effective identification and discarding of candidates with marginal contributions. The details of this enhancement are demonstrated in Section~\ref{sec:benefittrim}.

\begin{remark}
The conditioning of the feature matrix $F$ is closely related to the richness of the underlying solution dynamics. 
If the observed trajectory is relatively flat or lacks sufficient variation, the resulting feature columns become nearly indistinguishable, leading to an ill-conditioned system. 
In degenerate cases (e.g., constant solutions), the governing PDE is not uniquely identifiable, since multiple operators are compatible with the same dynamics, as also emphasized in~\cite{HeZhaoZhong2024FoCM}. 
In practice, our weak formulation alleviates part of this issue by acting as a low-pass filter, but additional stabilization such as ridge regularization can be introduced if necessary. 
Ensuring sufficiently rich initial conditions and solution dynamics is therefore crucial for maintaining a well-conditioned system and robust recovery. 

We also note that the Group Projected Subspace Pursuit (GPSP) algorithm used in WG-IDENT admits convergence guarantees under block-RIP conditions \cite{he2024groupprojectedsubspacepursuit}, which help ensure unique and stable recovery when the feature matrix satisfies such incoherence properties. In this sense, rich solution spectra and favorable matrix conditions together mitigate the risk of confusion between highly correlated terms.

\end{remark}

\section{Implementation details}\label{sec:implementation_details}
\begin{algorithm}[t!]
\caption{\textbf{WG-IDENT}\label{alg.SPalgo1}}
\footnotesize
\KwIn{
    $\mathbf{F} \in \mathbb{R}^{S \times KM}$: Feature matrix.\\
    $\mathbf{b} \in \mathbb{R}^{S}$: Feature response.\\
    $\tau$: Threshold for Group Feature Trimming (Section \ref{sec:G_trim}).\\
    $\rho^R$: Threshold for Reduction in Residual (RR) criterion.\\
    $L$: Parameter for RR criterion (Section~\ref{sec:RR}).
}

\textbf{Step 1: Generate Candidate PDEs} \\
\For{$\theta = 1$ \KwTo $K$}{
    Compute $\mathbf{c}^\theta = \underset{\substack{\mathbf{c} \in \mathbb{R}^{KM}, \\ \|\mathbf{c}\|_{\ell_{1,0}} = \theta}}{\arg\min} \|\mathbf{F}\mathbf{c} - \mathbf{b}\|_2^2$ using the Group Projected Subspace Pursuit (GPSP) algorithm \cite{he2023group}; see Algorithm \ref{SPalgo}.
}

\textbf{Step 2: Refine Candidate PDEs} \\
\For{$\theta = 1$ \KwTo $K$}{
    \textbf{a. Compute Normalized Coefficients:} \\
    \For{$v = 1$ \KwTo $\theta$}{Compute\\
        $\chi_v^\theta = \frac{n_v^\theta}{\max_{1 \leq v' \leq \theta} n_{v'}^\theta}$  with $\alpha_v^\theta = \|\mathbf{F}_v^\theta \mathbf{c}_v^\theta\|_2$,
    }
    
    \textbf{b. Group Feature Trimming:} \\
    Initialize an active feature group list for sparsity level $\theta$ with all feature groups included.\\
    \For{$v = 1$ \KwTo $\theta$}{
        \If{$\chi_v^\theta < \tau$}{
            Remove feature group $v$ from the active list.
        }
    }
    
    \textbf{c. Update Coefficients with Trimmed Features:} \\
    Recompute $\mathbf{c}^\theta$ by solving the regression problem using only the remaining active feature groups. If no feature groups are removed, retain $\mathbf{c}^\theta$ as obtained in Step 1.
}

\textbf{Step 3: Select Optimal PDE Using Reduction in Residual} \\
\For{$\theta = 1$ \KwTo $K - L$}{
    Compute the residual $q^\theta = \|\mathbf{F}\mathbf{c}^\theta - \mathbf{b}\|_2^2$. \\
    Compute the residual reduction ratio $s^\theta = \frac{q^\theta - q^{\theta+L}}{L q^1}$. \\
}
Choose the optimal sparsity level $\theta^* = \min\{\theta \mid 1 \leq \theta \leq K - L \text{ and } s^\theta < \rho^R\}$.

\KwOut{
    $\theta^*$: Optimal sparsity level.\\
    $\mathbf{c}^{\theta^*}$: Sparse coefficient vector corresponding to the optimal PDE after Group Feature Trimming.
}

\end{algorithm}

In this section, we present the \emph{boundary–condition–adapted} B-spline bases for both (i) approximating the spatially varying coefficients and (ii) constructing the test functions. By “adapted” we mean that the \textbf{spatial} B-splines are chosen \emph{periodic} on $\Omega$, while the \textbf{temporal} B-splines vanish at the endpoints $t=0$ and $t=T$ (zero Dirichlet), so that the boundary terms produced by integration by parts in time are eliminated. We then describe the design of tensor–product test functions $\varphi_r(x,t)=B^x_r(x)\,B^t_r(t)$ and derive an adaptive procedure (Section~\ref{subsec:testfnc}) for selecting their supports based on the spectral analysis of noisy data, which enhances noise suppression while preserving the essential dynamics.  The complete process is summarized in Algorithm \ref{alg.SPalgo1}.

\subsection{Boundary condition adapted B-spline basis} \label{subsec:bspline}

Let the degree of the B-splines be \( d \) and the order be \( p = d + 1 \). We consider an equidistant knot sequence \( z_j = z_0 + jh \) with knot spacing $h$ for \( j = 0, 1, \ldots, G \), where \( G \) denotes the total number of knots. For \( m = 0, 1, \ldots, G - p - 1 \), the B-spline basis is defined via the Cox-de Boor recursion formula \cite{deBoor1978} as follows:

\begin{equation}
B_{m}^{0}(x) =
\begin{cases}
  1 & \text{if } z_m \leq x < z_{m+1}, \\
  0 & \text{otherwise,}
\end{cases}
\end{equation}

\begin{equation}
B_{m}^{p}(x) = \left( \frac{x - z_m}{z_{m+p} - z_m} \right) B_m^{p-1}(x) + \left( \frac{z_{m+p+1} - x}{z_{m+p+1} - z_{m+1}} \right) B_{m+1}^{p-1}(x). \label{recursiveformula}
\end{equation}

The B-spline bases can be naturally extended to accommodate periodic boundary condition for the spatial domain, as illustrated in Figure~\ref{fig:testfnc2}(a). The corresponding formulas are detailed in \ref{Bspline_eqs}. For the time domain, we employ zero Dirichlet boundary condition for test functions to ensure that the boundary term from integration by parts vanishes  at the temporal boundaries, as shown in Figure~\ref{fig:testfnc2}(b). For higher-dimensional problems, we extend these bases by constructing tensor-product B-splines, as depicted in Figure~\ref{fig:testfnc2}(c). 

In this paper, we assume periodic boundary conditions for the PDE in space. In equations~\eqref{eq_varyingcoef_appr} and~\eqref{eq_testfnc_tensor}, the coefficient basis \(\{\psi_m(x)\}_{m=1}^M\) and spatial test functions \(B_{r_x}(x)\) are constructed using periodic B-splines, while the temporal test functions \(B_{r_t}(t)\) are constructed using B-splines with zero Dirichlet conditions.

\begin{figure}
    \centering
    \begin{tabular}{c c c}

        {\includegraphics[width=0.23\linewidth]{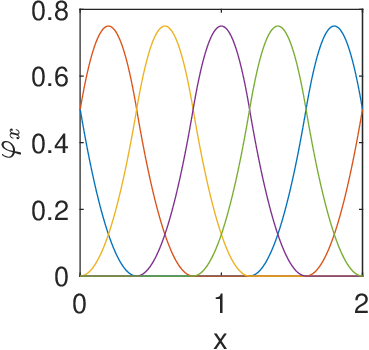}} & 
        {\includegraphics[width=0.23\linewidth]{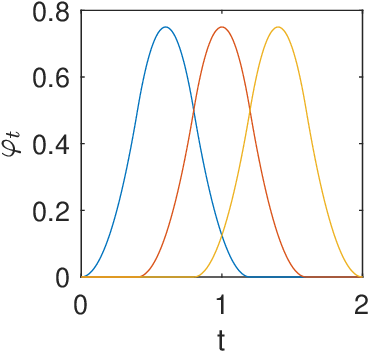}} &
        {\includegraphics[width=0.32\linewidth]{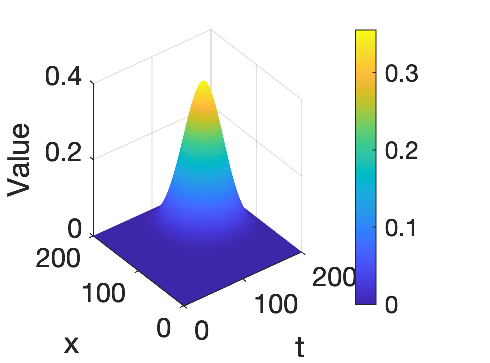}}\\
          (a) & (b) & (c) \\

    \end{tabular}
    \caption{(a) Periodic B-spline bases in the spatial domain. (b) B-spline bases in the time domain. (c) Tensor-product bivariate B-spline bases used for test functions in this work. }
    \label{fig:testfnc2}
\end{figure}

\begin{remark}

In the weak formulation with temporal derivatives, integration by parts uses boundary terms at $t=0$ and $t=T$. 
To eliminate them, the temporal test functions are chosen to vanish at the endpoints.
This choice does not aim to approximate the solution near the temporal boundaries, but rather ensures that the test functions serve as valid tool to extract features of the interior dynamics. 
Consequently, the weak formulation remains well-defined, and the constructed family of test functions continues to provide sufficient information for reliable PDE identification.

\end{remark}

\subsection{Design of the test function} \label{subsec:testfnc} 
As illustrated in Section~\ref{subsec:bspline}, different test functions are realized as translations of a single B-spline basis function. Therefore, the integral forms in~\eqref{eq_regression_model} can be associated with samples of convolutions of feature terms with the B-spline kernel. This was also exploited in~\cite{tang2022weakident,Messenger_2021}. From this perspective, the robustness of the weak formulation benefits from the low-pass filtering effect achieved through proper choice of test functions. 

In our proposed method, we use B-splines as test functions, whose spectral properties are determined by the local polynomial order and the placement of knot vectors \cite{Lenz_Bspline}. 
We discuss the design of test functions in space. Test functions in time can be designed in a similar way.

Following ideas in \cite{Messenger_2021,tang2022weakident}, we analyze the spectral property of noisy data to identify a critical frequency $k_x^*$ which divides the frequency domain into signal-dominate region $(-k_x^*,k_x^*)$ and noise-dominate region $(-\infty,-k_x^*]\cup[k_x^*,+\infty)$. By controlling the support of test functions, we ensure that the Fourier transform of test function is concentrated within the signal-dominant region, minimizing the impact of noise. Details on computing $k_x^*$ is deferred to \ref{Test_Fnc_cons}.

Let \( B^{p,h}(x) \) be an equidistant knot B-spline of order \( p \) with knot spacing \( h \) and centered at the origin, and $\widehat{B}^{p,h}(\omega)$ be its Fourier transform. Denote the support of $B^{p,h}$ by $(-\alpha_x,\alpha_x)$.
To concentrate the frequency of  B-spline within the signal-dominant region \((-k_x^*, k_x^*)\), we aim to select the spline order \(p\) and knot spacing \(h\) such that
\begin{align}\label{inequality_FT_bspline}
    1-\frac{\int_{-k_x^*}^{k_x^*}\left| \widehat{B}^{p,h}(\omega) \right|d\omega}{\int_{-\infty}^{\infty}\left| \widehat{B}^{p,h}(\omega) \right|d\omega} < \epsilon^* 
\end{align}
where \(\epsilon^*\) is a ratio measuring how $\widehat{B}^{p,h}(\omega)$ concentrates on the signal dominate region. Inequality~\eqref{inequality_FT_bspline} reveals a complicated, nonlinear relationship among the support size \(\alpha_x\), knot spacing \(h\) and spline order \(p\).

Motivated by theoretical connections between B-splines and Gaussian functions~\cite{christensen2017bsplineapproximationsgaussiangabor}, we address this challenge by approximating B-spline using a Gaussian function. 

This approximation is useful because the Fourier transform of a Gaussian function is another Gaussian function, which is easy to analyze:
\begin{lemma}[Fourier Transform of Gaussian Functions \cite{bracewell2000fourier}] \label{thm.Gaussian_Fourier}
Let $\rho(x)$ be a Gaussian function defined as $\rho(x) = \frac{1}{\sqrt{2\pi}\sigma}\exp\left(-\frac{x^2}{2\sigma^2}\right)$, where $\sigma$ is the standard deviation. Then the Fourier transform of $\rho(x)$, denoted $\widehat{\rho}(\omega)$ (with \(\omega\) as the frequency variable), is also Gaussian:

\[
\widehat{\rho}(\omega) = \exp\left(-\frac{\omega^2 \sigma^2}{2}\right).
\]

\end{lemma}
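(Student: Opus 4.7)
The plan is to establish this classical identity directly from the definition of the Fourier transform using the completion-of-squares technique together with a contour-shift argument. I would first fix the convention being used (which, from the form of the stated answer, must be $\widehat{\rho}(\omega)=\int_{-\infty}^{\infty}\rho(x)\,e^{-i\omega x}\,dx$, i.e.\ the unnormalized transform, since the constant $1/(\sqrt{2\pi}\sigma)$ out front of $\rho$ together with the absence of a $\sqrt{2\pi}$ factor in $\widehat\rho$ is consistent with this convention). Making this convention explicit at the start is important because the statement of the lemma is convention-dependent.

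Next I would substitute $\rho$ into the Fourier integral and combine the two exponentials into a single quadratic exponent $-\tfrac{x^{2}}{2\sigma^{2}}-i\omega x$. Completing the square yields
\begin{equation*}
-\frac{x^{2}}{2\sigma^{2}}-i\omega x \;=\; -\frac{(x+i\omega\sigma^{2})^{2}}{2\sigma^{2}} \;-\; \frac{\omega^{2}\sigma^{2}}{2},
\end{equation*}
so that the factor $\exp(-\omega^{2}\sigma^{2}/2)$ can be pulled outside the integral, leaving the contour-shifted Gaussian integral $\int_{-\infty}^{\infty}\exp\!\bigl(-(x+i\omega\sigma^{2})^{2}/(2\sigma^{2})\bigr)\,dx$.

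The key step, and the one I expect to require the most care, is showing that this shifted integral equals the standard Gaussian integral $\sqrt{2\pi}\,\sigma$. The cleanest route is a contour-integration argument: consider the rectangular contour with vertices $\pm R$ and $\pm R + i\omega\sigma^{2}$, apply Cauchy's theorem to the entire integrand $e^{-z^{2}/(2\sigma^{2})}$, and send $R\to\infty$, noting that the vertical sides vanish because of the Gaussian decay in the real part. Multiplying the result $\sqrt{2\pi}\,\sigma$ by the prefactor $1/(\sqrt{2\pi}\sigma)$ and by $\exp(-\omega^{2}\sigma^{2}/2)$ yields exactly the claimed expression.

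Everything else is routine: the Gaussian integral $\int_{-\infty}^{\infty}e^{-x^{2}/(2\sigma^{2})}\,dx=\sqrt{2\pi}\,\sigma$ is standard (from squaring and passing to polar coordinates), so the only genuine obstacle is the justification of the imaginary shift. For a proof suitable for this paper, one could alternatively cite the contour-shift argument as standard and simply state it, since the lemma is invoked only as a tool to analyze the spectral concentration of B-spline test functions.
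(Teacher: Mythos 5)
Your proposal is correct: the completion-of-squares computation together with the contour-shift (rectangular contour, Cauchy's theorem, vanishing vertical sides) is the standard derivation, and your identification of the convention $\widehat{\rho}(\omega)=\int_{-\infty}^{\infty}\rho(x)e^{-i\omega x}\,dx$ is exactly the one the paper uses implicitly (it is what makes $\widehat{\rho}(\omega)=\exp(-p h^{2}\omega^{2}/24)$ with $\sigma^{2}=ph^{2}/12$ in the proof of Theorem~3 consistent). Note, however, that the paper gives no proof of this lemma at all — it is quoted as a classical fact from the cited reference — so your self-contained argument is not in conflict with anything in the paper; it simply supplies the textbook proof that the authors chose to cite rather than reproduce.
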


The following theorem gives a way to choose the standard deviation of Gaussian functions so that it has the same first four moments as those of $B^{p,h}(x)$ (see a proof in \ref{proof_prop}):
\begin{theorem} \label{prop:gaussian_bspline_moments}
Let $\rho(x)$ be a Gaussian function with standard deviation $\sigma$, where $\sigma$ is given by $\frac{\sqrt{p}}{2\sqrt{3}}h$. Then, the first four moments of the Gaussian function match those of a normalized equidistant knot B-spline of order $p$ and knot spacing $h$.
\end{theorem}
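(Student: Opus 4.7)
The plan is to exploit the well-known representation of the equidistant-knot B-spline of order $p$ with knot spacing $h$ as the $p$-fold self-convolution of the indicator function of an interval of length $h$. After dividing by the appropriate normalizing constant, the resulting function $B^{p,h}$ is the probability density of the random variable $S_p = X_1 + \cdots + X_p$, where the $X_i$ are i.i.d.\ uniform on an interval of length $h$ (shifted, without loss of generality, so that the distribution is centered at the origin). From this point onward the theorem reduces to a calculation of the low-order moments of a sum of i.i.d.\ centered uniforms.

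Concretely, I would compute, in order, the zeroth, first, second, and third moments of $S_p$. The zeroth moment is $1$ by construction; the first is $\mathbb{E}[S_p] = 0$ by centering; the third vanishes from the symmetry of each $X_i$ about $0$ together with independence. The only substantive quantity is the variance, which by additivity of variance for independent variables equals $\mathrm{Var}(S_p) = p \cdot \mathrm{Var}(X_1) = p h^2/12$, using the elementary fact $\mathrm{Var}(X_1) = h^2/12$ for a uniform on an interval of length $h$. None of these steps requires machinery beyond standard moment calculus.

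To conclude, I would compare these four numbers against the corresponding moments of a zero-mean Gaussian with standard deviation $\sigma$, which are $1, 0, \sigma^2, 0$. Three of the four matches (orders $0$, $1$, $3$) hold automatically by symmetry and normalization, while the second-moment match forces $\sigma^2 = p h^2/12$, giving exactly $\sigma = \frac{\sqrt{p}}{2\sqrt{3}}\,h$ as claimed.

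The main obstacle is interpretational rather than computational. A direct calculation using $\mathbb{E}[S_p^{4}] = p\,\mathbb{E}[X_1^{4}] + 3p(p-1)\,\mathrm{Var}(X_1)^{2}$ together with $\mathbb{E}[X_1^{4}] = h^{4}/80$ shows that the fourth central moment of $B^{p,h}$ does not in general equal $3\sigma^{4}$; hence a single free parameter $\sigma$ can at best match the four moments of orders $0, 1, 2, 3$ simultaneously, and the phrase \emph{first four moments} in the theorem should be read in this sense. This is consistent with the heuristic justification of the Gaussian approximation used earlier in the paper, as well as with the central limit theorem, which guarantees that the discrepancy in the higher-order moments vanishes as $p$ grows.
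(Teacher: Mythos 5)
Your proof is correct, and it reaches the same conclusion by a genuinely different route. The paper's argument quotes the explicit moment formulas for equidistant-knot B-splines from the literature (the cited identities $\mu_0=1$, $\mu_1=L_1+\tfrac{ph}{2}$, $\mu_2=\mu_1^2+\tfrac{ph^2}{12}$), normalizes the B-spline so its zeroth moment is one, uses symmetry to kill the odd moments, and then matches the variance $\tfrac{ph^2}{12}$ with $\sigma^2$. You instead derive those facts from first principles via the standard self-convolution representation: the normalized equidistant-knot B-spline of order $p$ is the density of $S_p=X_1+\cdots+X_p$ with $X_i$ i.i.d.\ uniform on a centered interval of length $h$, so the zeroth, first and third moments are immediate from normalization and symmetry, and the variance is $p\,\mathrm{Var}(X_1)=\tfrac{ph^2}{12}$ by additivity — exactly forcing $\sigma=\tfrac{\sqrt{p}}{2\sqrt{3}}h$. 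What your approach buys is self-containedness (no appeal to the external moment formulas) and a transparent link to the central limit theorem, which also underlies the paper's later convergence result for the Fourier transforms. Your closing observation is also apt and consistent with the paper: the fourth-order (central) moment of $S_p$ is $p\,\tfrac{h^4}{80}+3p(p-1)\bigl(\tfrac{h^2}{12}\bigr)^2\neq 3\sigma^4$ in general, so "first four moments" must be read as orders $0$ through $3$ — which is precisely the set of moments the paper's own proof verifies.
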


Figure \ref{fig:gaussian_bspline} illustrates the similarity between a B-spline basis function and a Gaussian distribution with parameters specified as in Theorem \ref{prop:gaussian_bspline_moments}. The B-spline basis function, denoted by $\varphi_x$, is constructed using equidistant knots. The Gaussian function, represented by $\rho_{\sigma}$, has a standard deviation $\sigma = \frac{\sqrt{p}}{2\sqrt{3}}h$, where $h$ is the knot spacing and $p$ is the B-spline order. As stated in the proposition, this specific choice of $\sigma$ ensures that the first four moments of the Gaussian function match those of the B-spline. The figure shows that the two functions are nearly indistinguishable.

\begin{figure}[t!] \centering \includegraphics[width=0.4\textwidth]{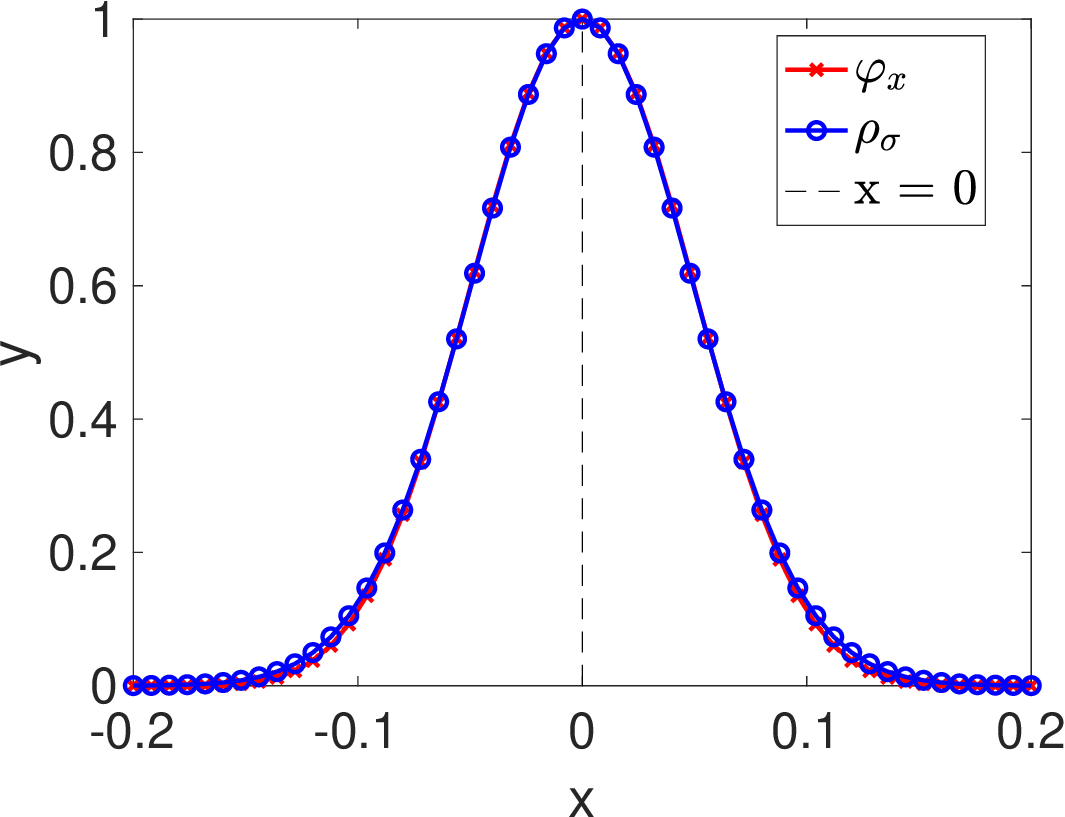} \caption{Comparison between a B-spline basis function $B^{p,h}(x)$ with $p=7,h=0.4$ and a Gaussian distribution $\rho_{\sigma}$ with $\sigma=\frac{\sqrt{p}}{2\sqrt{3}}h$. } \label{fig:gaussian_bspline} \end{figure}

The following theorem  states the accuracy of this approximation quantitatively.
\begin{theorem}\label{lemma:ft_difference_bound}

Let \( B^{p,h}(x) \) be an equidistant knot B-spline of order \( p \) with knot spacing \( h \), centered at the origin, and let
\[
\rho(x) = \frac{1}{\sqrt{2\pi}\sigma} e^{-\frac{x^2}{2\sigma^2}}, \quad \sigma = \frac{\sqrt{p}}{2\sqrt{3}} h
\]
be a Gaussian matching the first four moments of \( B^{p,h}(x) \). Define \(g^{p,h}(x)=\frac{1}{h} B^{p,h}(x)\), and denote the Fourier transform of $\rho$ and $g^{p,h}$ by \(\widehat{\rho}(\omega)\) and \(\widehat{g}^{p,h}(\omega)\), respectively. 
When $p<13$, we have
\begin{align}
    |\widehat{\rho}(\omega)-\widehat{g}^{p,h}(\omega)|\leq 
    \begin{cases}
        \frac{4}{5e^2p}\left(1+\frac{17\ln p}{7p}\right), &\mbox{for } |\omega|\leq\frac{2}{h}\sqrt{\frac{12\ln p}{p}},\\
        \exp\left(-\frac{p(h\omega)^2}{24}\right)+\left(\frac{2}{h\omega}\right)^p, &\mbox{for } |\omega|>\frac{\pi}{h}.
    \end{cases}
\end{align}

When $p\geq13$, we have
\begin{align}
    |\widehat{\rho}(\omega)-\widehat{g}^{p,h}(\omega)|\leq
        \begin{cases}
            \frac{4}{5e^2p}\left(1+\frac{17\ln p}{7p}\right), & \mbox{for }|\omega| < \frac{2}{h}\sqrt{\frac{12 \ln p}{p}},\\
            \frac{4(\ln p)^2}{5p^3}\left(1+\frac{17\ln p}{7p}\right), & \mbox{for }\frac{2}{h}\sqrt{\frac{12 \ln p}{p}} \leq |\omega| \leq \frac{\pi}{h},\\
            \exp\left(-\frac{p(h\omega)^2}{24}\right)+\left(\frac{2}{h\omega}\right)^p, &\mbox{for } |\omega|>\frac{\pi}{h}.
        \end{cases}
\end{align}
Furthermore,
\[
\lim_{p \to \infty} \left( \widehat{\rho}(\omega) -\widehat{g}^{p,h}(\omega) \right) = 0 \quad \text{for all } \omega \in \mathbb{R}~\text{and}~h>0.
\]
\end{theorem}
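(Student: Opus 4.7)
The plan is to reduce to a single-variable problem through $x = h\omega/2$ and then exploit the intimate relationship between $\mathrm{sinc}^p(x)$ and the Gaussian $\exp(-px^2/6)$. First I write the explicit Fourier transforms: since $g^{p,h}$ is (up to normalization) the $p$-fold convolution of a box of width $h$, the standard formula gives $\widehat{g}^{p,h}(\omega) = \mathrm{sinc}^p(h\omega/2)$ with $\mathrm{sinc}(x) = \sin(x)/x$, while Lemma~\ref{thm.Gaussian_Fourier} together with $\sigma = \sqrt{p}\,h/(2\sqrt{3})$ yields $\widehat{\rho}(\omega) = \exp(-p(h\omega)^2/24)$. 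Substituting $x = h\omega/2$, the task reduces to bounding $D(x,p) := \exp(-px^2/6) - \mathrm{sinc}^p(x)$ on the three regions $|x| \leq \sqrt{12\ln p/p}$, $\sqrt{12\ln p/p} \leq |x| \leq \pi$, and $|x| > \pi$.

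For the two inner regions, the main tool is the Euler product $\sin(x)/x = \prod_{n \geq 1}(1 - x^2/(n\pi)^2)$, from which I obtain the absolutely convergent logarithmic series $-\ln\mathrm{sinc}(x) = \sum_{k\geq 1}\frac{\zeta(2k)}{k\pi^{2k}}x^{2k}$ on $|x| < \pi$. The $k=1$ term equals $x^2/6$, exactly matching the Gaussian exponent; factoring it out gives $\mathrm{sinc}^p(x) = \exp(-px^2/6)\exp(-E(x,p))$ with $E(x,p) := p\sum_{k\geq 2}\frac{\zeta(2k)}{k\pi^{2k}}x^{2k} \geq 0$. The elementary inequality $1 - e^{-E} \leq E$ then yields $0 \leq D(x,p) \leq \exp(-px^2/6)\,E(x,p)$. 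The leading term of $E$ is $px^4/180$, and the resulting envelope $\exp(-px^2/6)\cdot px^4/180$ has a unique interior critical point at $x_\star = 2\sqrt{3}/\sqrt{p}$, where direct substitution gives the value $\tfrac{4}{5e^2 p}$. This produces the first line in each case. For $p \geq 13$, on the second region $x \geq \sqrt{12\ln p/p}$ we have $x \geq x_\star$ (equivalent to $\ln p \geq 1$), so the envelope is strictly decreasing and its maximum is attained at the left endpoint, where $px^2 = 12\ln p$; direct substitution then gives $\tfrac{4(\ln p)^2}{5p^3}$. The threshold $p=13$ is simply the smallest integer for which this second bound is genuinely smaller than $\tfrac{4}{5e^2 p}$ and therefore worth stating separately.

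For the tail $|x| > \pi$ the Euler product is no longer available, so I bound the two transforms separately: $|\widehat{\rho}(\omega)| = \exp(-p(h\omega)^2/24)$ trivially, and $|\widehat{g}^{p,h}(\omega)| \leq (2/(h|\omega|))^p$ because $|\sin|\leq 1$ forces $|\mathrm{sinc}(h\omega/2)| \leq 2/(h|\omega|)$; the triangle inequality then yields the last line of each case. The pointwise limit $p \to \infty$ is immediate since every bound above decays in $p$, and $D(0,p) = 0$ identically. The main obstacle I anticipate is producing the sharp multiplicative correction $\bigl(1 + 17\ln p/(7p)\bigr)$: this requires bounding the remainder $\sum_{k\geq 3}\frac{\zeta(2k)}{k\pi^{2k}}x^{2k}$ against the $k=2$ term uniformly in $x$, using the monotonicity $\zeta(2k) \leq \zeta(4)$ for $k \geq 2$ and the geometric factors $(x/\pi)^{2(k-2)}$, and then tracking this extra factor through the subsequent optimization of the envelope. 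The estimate is tedious but essentially mechanical; the conceptual content sits in the factorization of the Gaussian out of the $\mathrm{sinc}^p$ expansion carried out above.
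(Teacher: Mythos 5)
Your proposal is correct in outline but takes a genuinely different route from the paper. The paper's proof of Theorem~\ref{lemma:ft_difference_bound} is essentially a reduction to a cited result: it computes $\widehat{g}^{p,h}(\omega)=\bigl(\sin(h\omega/2)/(h\omega/2)\bigr)^p$ and $\widehat{\rho}(\omega)=\exp(-p(h\omega)^2/24)$, invokes Lemma~2.1 of \cite{christensen2017bsplineapproximationsgaussiangabor} (restated as Lemma~\ref{lemma.exp}) with the substitution $x=h\omega/2$ to obtain the two inner-region bounds for $p\ge 13$, handles $|\omega|>\pi/h$ by the same triangle-inequality tail estimate you use, and disposes of $p<13$ by observing that then $\tfrac{2}{h}\sqrt{12\ln p/p}>\pi/h$, so the middle region is empty. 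You instead re-derive the inner-region estimates from scratch: the Euler-product expansion of $\ln\mathrm{sinc}$, the factorization $\mathrm{sinc}^p(x)=e^{-px^2/6}e^{-E(x,p)}$ with $E\ge 0$, the bound $1-e^{-E}\le E$, and optimization of the envelope $e^{-px^2/6}\,p x^4/180$; your critical-point and endpoint evaluations giving $\tfrac{4}{5e^2p}$ and $\tfrac{4(\ln p)^2}{5p^3}$ are exactly right. Your route would make the theorem self-contained and shows where the constants come from; the paper's route outsources the only genuinely delicate step, namely the uniform multiplicative correction $\bigl(1+\tfrac{17\ln p}{7p}\bigr)$ obtained by controlling the $k\ge 3$ remainder against the $k=2$ term and tracking that factor through the optimization. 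In your write-up this step remains a sketched promissory note, yet it is precisely the content of the cited lemma, so you must either carry it out in full or cite it as the paper does.

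Two bookkeeping points should be fixed. First, with $x=h\omega/2$ the theorem's regions correspond to $|x|<\sqrt{12\ln p/p}$, $\sqrt{12\ln p/p}\le|x|\le\pi/2$, and $|x|>\pi/2$ --- not $\pi$. The tail argument is unaffected, since $|\mathrm{sinc}(x)|\le 1/|x|$ holds for all $x\ne 0$, but the middle region must stop at $\pi/2$: your comparison of $\sum_{k\ge 3}\frac{\zeta(2k)}{k\pi^{2k}}x^{2k}$ with the $k=2$ term via the geometric factors $(x/\pi)^{2(k-2)}$ degenerates as $x\to\pi$, where $E(x,p)$ blows up, so a uniform correction factor on $[\sqrt{12\ln p/p},\pi]$ is neither attainable nor needed. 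Second, your explanation of the threshold $p=13$ is not the right one: the comparison $\tfrac{4(\ln p)^2}{5p^3}<\tfrac{4}{5e^2p}$ reduces to $e\ln p<p$, which already holds for every integer $p\ge 2$. The actual reason is that $p=13$ is the smallest integer with $\sqrt{12\ln p/p}\le \pi/2$, i.e.\ the smallest $p$ for which the middle region is nonempty; for $p<13$ the first region already covers $|\omega|\le\pi/h$, which is exactly how the paper dispatches that case.
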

Theorem \ref{lemma:ft_difference_bound} is proved in \ref{proof_lemma1}. 
Theorem \ref{lemma:ft_difference_bound} shows that as the order of B-splines goes to infinity, its Fourier transform converges to that of Gaussian functions with properly chosen $\sigma$, demonstrating that  Gaussian function is a good approximator of B-splines. 
In our experiments, $p=7$ gives good numerical results.

The approximation results in Theorem \ref{prop:gaussian_bspline_moments} and \ref{lemma:ft_difference_bound} lead to a practical method for determining the support size \(\alpha_x\) of B-splines:
\begin{theorem}\label{thm.alphax}
    For a given $\tau_x>0$, consider a B-spline basis function of order $p$ and support $[-\alpha_x,\alpha_x]$. Let $\rho(x)$ be the Gaussian function whose first four moments are the same as those of the B-spline basis. Set 
    \begin{equation} 
    \alpha_x = \frac{\sqrt{3p} \tau_x}{k_x^*}.
\end{equation}
Then $k_x^*$ falls $\tau_x$ times of the standard deviations into the tail of the spectrum of $\rho(x)$ in the frequency domain.
\end{theorem}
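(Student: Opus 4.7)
The plan is to chain together three facts that have already been established in the excerpt: the support of an order-$p$ equidistant B-spline is determined by its knot spacing $h$, Theorem~\ref{prop:gaussian_bspline_moments} ties $h$ to the standard deviation of the moment-matching Gaussian $\rho$, and Lemma~\ref{thm.Gaussian_Fourier} tells us the Fourier transform of a Gaussian is again Gaussian with a reciprocally related width. Combining these will turn the desired condition ``$k_x^*$ is $\tau_x$ standard deviations into the frequency tail'' into an explicit equation in $\alpha_x$.

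First I would record the geometric identity relating support and knot spacing. An equidistant knot B-spline of order $p$ with knot spacing $h$, centered at the origin, is supported on an interval of length $ph$; hence its half-support satisfies $\alpha_x = ph/2$, equivalently $h = 2\alpha_x/p$. Next, substituting this into Theorem~\ref{prop:gaussian_bspline_moments} yields
\begin{equation*}
\sigma \;=\; \frac{\sqrt{p}}{2\sqrt{3}}\,h \;=\; \frac{\sqrt{p}}{2\sqrt{3}}\cdot\frac{2\alpha_x}{p} \;=\; \frac{\alpha_x}{\sqrt{3p}},
\end{equation*}
which is the standard deviation of the moment-matching Gaussian $\rho(x)$ in the spatial domain.

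Then I would pass to the frequency side via Lemma~\ref{thm.Gaussian_Fourier}. The transform $\widehat{\rho}(\omega)=\exp(-\omega^2\sigma^2/2)$ is Gaussian in $\omega$ with standard deviation $\sigma_\omega = 1/\sigma$. The phrase ``$k_x^*$ falls $\tau_x$ standard deviations into the tail of the spectrum of $\rho$'' is then interpreted as the equation $k_x^* = \tau_x\,\sigma_\omega = \tau_x/\sigma$. Solving for $\sigma$ gives $\sigma = \tau_x/k_x^*$, and equating with the expression $\sigma = \alpha_x/\sqrt{3p}$ produces
\begin{equation*}
\frac{\alpha_x}{\sqrt{3p}} \;=\; \frac{\tau_x}{k_x^*} \qquad\Longleftrightarrow\qquad \alpha_x \;=\; \frac{\sqrt{3p}\,\tau_x}{k_x^*},
\end{equation*}
which is the claimed formula.

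There is no genuinely hard step; the only subtlety I would flag is to state the convention used clearly, namely which sign convention for the Fourier transform is adopted and that the ``standard deviation of the spectrum'' refers to the width parameter of the Gaussian envelope $\widehat{\rho}$ rather than a moment of $|\widehat{\rho}|$. Once that convention is fixed, the proof is just the three substitutions above, and the accuracy of using $\rho$ as a proxy for the true B-spline spectrum is what Theorem~\ref{lemma:ft_difference_bound} is there to justify quantitatively.
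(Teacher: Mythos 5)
Your proposal is correct and follows essentially the same route as the paper's proof: use Lemma~\ref{thm.Gaussian_Fourier} to identify the spectral standard deviation $1/\sigma$, set $k_x^*=\tau_x/\sigma$, and combine with the moment-matching relation $\sigma=\alpha_x/\sqrt{3p}$ from Theorem~\ref{prop:gaussian_bspline_moments} to solve for $\alpha_x$. The only difference is that you explicitly derive $\sigma=\alpha_x/\sqrt{3p}$ from $h=2\alpha_x/p$, a step the paper cites directly, so your write-up is if anything slightly more complete.
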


Theorem \ref{thm.alphax} is proved in \ref{proof_thm}. Theorem \ref{thm.alphax} provides a systematic way to choose the B-spline support size $\alpha_x$. By positioning the critical frequency $k_x^*$ at $\tau_x$ standard deviations into the Gaussian tail, we ensure that 
$\epsilon^*$ in (\ref{inequality_FT_bspline}) is small, and $\widehat{B}^{p,h}$ concentrates on the signal-dominate region.

For practical implementation using the discrete Fourier transform (DFT), we adjust this relationship to account for sampling effects:
\begin{align}\label{eq_alpha}
    \alpha_x = \frac{\sqrt{3p} (N^x-1) \Delta x \tau_x}{2 \pi k_x^*}
\end{align}
where $\Delta x$ is the spatial step size, $N^x$ is the number of spatial grid points. This expression positions the critical frequency \(k_x^*\) at approximately \(\tau_x\) standard deviations into the tail of the B-spline basis function's Fourier spectrum. By doing so, the test function effectively emphasizes the signal-dominant frequency region \((-k_x^*, k_x^*)\) while minimizing its response in the noise-dominant region \((-\infty,-k_x^*]\cup[k_x^*, +\infty)\).

The relation~\eqref{eq_alpha} reveals the following properties of the proposed test function:
\begin{itemize}
    \item \textbf{Grid resolution effect:} The term $(N^x-1)\Delta x$ represents the total length of the spatial domain. A larger domain length increases $\alpha_x$, meaning that test functions have wider support.
    \item \textbf{Noise level impact:} A larger $k_x^*$ indicates that signal-dominant frequencies extend to higher values. Therefore, $\alpha_x$ decreases to ensure that test functions capture higher frequencies by having narrower spatial support.
    \item \textbf{Polynomial degree influence:} A higher B-spline order $p$ results in a smoother test function. The factor $\sqrt{p}$ in the numerator implies that increasing $p$ will increase $\alpha_x$, allowing for a wider support to maintain smoothness.
\end{itemize}

From~\eqref{eq_alpha}, we can determine the number of test functions required for the weak formulation. Given that we are using equidistant knots with knot spacing \( h = \frac{2\alpha_x}{p} \), for each knot, we will construct a test function by translating the previous one, therefore, the total number of test functions over the spatial domain $[L_1, L_2]$, denoted as $ J $, is:

\begin{equation} \label{total_number_testfn}
    J = \left\lceil \frac{L_2 - L_1}{\frac{2\alpha_x}{p}} \right\rceil = \left\lceil \frac{(L_2 - L_1) p}{2\alpha_x} \right\rceil,
\end{equation}
where the ceiling function \( \lceil \cdot \rceil \) ensures that \( J \) is an integer. This selection guarantees that the spatial domain \([L_1, L_2]\) is adequately covered by the test functions.
\begin{remark}\label{remark_test}
In existing works \cite{Messenger_2021, tang2022weakident}, a test function \(\varphi(x)\) is defined using truncated polynomials. Specifically, for \(i = 1, \ldots, N^x\), an order $p$ and a positive integer \(m_x\) depending on $p,k^*$, one defines:
\begin{equation} \label{otherstestfnc}
    \varphi(x) = 
    \begin{cases} 
        \left(1 - \left(\frac{x - x_i}{m_x \Delta x}\right)^2\right)^{p_x} 
        , & 
        |x - x_i| \leq m_x \Delta x, \\ 
        0 & \text{otherwise}.
    \end{cases}
\end{equation}

While these test functions are simple to construct, they do not satisfy the partition-of-unity property and may yield inconsistent weighting across the domain. By contrast, our B-spline test functions inherently form a partition of unity, ensuring consistent weighting and improved numerical stability. Our numerical experiments (Section~\ref{sec:robustness_of_bspline}) further demonstrate that using B-splines with adaptively chosen support sizes yields superior performance in terms of accuracy and robustness.

\end{remark}

\section{Numerical experiments} \label{sec:numerical_experiments}

\begin{table}[t!]
\centering
\footnotesize
\begin{tabular}{lll} 
\toprule
Equation  & Coefficients & Grid \\ 
\midrule

Advection diffusion 
& 
$\begin{cases}
    a(x) = 3(\sin(2\pi x)+3) \\
    c    = 0.2
\end{cases}$  
& $1001\times201$ \\

\refstepcounter{equation}\label{eq:advection-diffusion}
$\displaystyle u_t = a(x)u_x + cu_{xx}$ \hfill (\theequation)
& 
$
\begin{cases}
    u(x,0)=\sin(4\pi x)^2\cos(2\pi x)\\
    \hspace{1.5cm}+\sin(6\pi x)
\end{cases}
$

& 
$[0,0.05]\times[0,2)$ \\ 

\bottomrule

Viscous Burgers'
& 
$\begin{cases}
    a(x) = 0.8(\sin(2\pi x)+1) \\
    c   = 0.1
\end{cases}$  
&
$501 \times 201$ \\

\refstepcounter{equation}\label{eq:viscous-burgers}
$\displaystyle u_t = a(x)uu_x + cu_{xx}$ \hfill (\theequation)
& 
$
\begin{cases}
    u(x,0)=4(\sin(2\pi x)^22\cos(2\pi x)\\
    \hspace{1.5cm}+\sin(2 \pi x+0.2))
\end{cases}
$

& 
$[0,0.15] \times [0,2)$ \\  
\bottomrule

Korteweg-de Vires (KdV) 
& 
$\begin{cases}
    a(x) = 0.5(2+0.3\cos(\pi x/2)) \\
    b(x) = 0.01(0.5+0.1\sin(\pi x/2))
\end{cases}$  
&
$1212 \times 512$ \\

\refstepcounter{equation}\label{eq:kdv}
$\displaystyle u_t = a(x)uu_x + b(x)u_{xxx}$ \hfill (\theequation)
& 
$
\begin{cases}
    u(x,0)=2(6\sin(\pi x)\cos (\pi x)\\
    \hspace{1.5cm}+2\sin(\pi x)+2)
\end{cases}
$
& 
$[0,0.1] \times [-2,2)$ \\ 
\bottomrule 

Kuramoto-Sivashinsky (KS) 
& 
$\begin{cases}
    a(x) = 1 + 0.5\sin(\pi x/20) \\  
    b(x) = -1 + 0.2\sin(\pi x/20) \\  
    c(x) = -1 - 0.2\sin(\pi x/20)
\end{cases}$  
& 
$700 \times 500$ \\

\refstepcounter{equation}\label{eq:ks}
$\begin{aligned}
    \displaystyle u_t = &a(x)uu_x + b(x)u_{xx} \\
    &+ c(x)u_{xxxx}
\end{aligned}$ \hfill (\theequation)
& 
$
\begin{aligned}
    u(x,0)=\cos(\pi x /20)(1+\sin(\pi x/20))
\end{aligned}
$

& 
$[0,60]\times[0,80)$ \\ 
\bottomrule 

Schr\"{o}dinger 
& 
$\begin{cases}
    a(x) = -5\cos(\pi x/3) \\  
    c = 0.5 
\end{cases}$   
& 
$2000 \times 100$\\

\refstepcounter{equation}\label{eq:schrodinger}
$\displaystyle iu_t = cu_{xx} + a(x)u$ \hfill (\theequation)
& 
$\begin{cases}
    u_0(x,0) = \sin(2\pi x + 0.2) \\  
    \hspace{1.5cm}+0.5\cos((x-3)\pi/3)\\
    \hspace{1.5cm}\cos(2\pi x) \\
    u_1(x,0) = 0.5\cos(2\pi x/3)\\
    \hspace{1.5cm}+0.5\sin(2\pi x +0.2)  
\end{cases}$  
& $[0,2]\times[-3,3)$ \\ 
\bottomrule 

Nonlinear Schr\"{o}dinger (NLS) 
& 
$\begin{cases}
    a(x) = -5\cos(\pi x/3) \\  
    c = 0.5 
\end{cases}$   
& 
$2000 \times 200$ \\

\refstepcounter{equation}\label{eq:nls}
$\displaystyle iu_t = cu_{xx} + a(x)|u|^2u$ \hfill(\theequation)
&
$\begin{cases}
    v_0(x,0) = \sin(4\pi x + 0.2)\\
    \hspace{1.5cm}+\cos((x-3)\pi /3)\cos(2\pi x )\\
    w_0(x,0) = 0.5\cos(2\pi x/6)
     +\sin(2\pi x)  
\end{cases}$  
& $[0,0.5]\times[-3,3)$ \\ 
\bottomrule
\end{tabular}
\caption{Summary of the differential equations tested in this paper. For the complex function $u(x,t)$ in Schr\"{o}dinger and NLS equations, we denote $u(x,t)=v(x,t)+\sqrt{-1}w(x,t)$ with $v(x,t)$ being the real part and $w(x,t)$ being the imaginary part. We use $v_0,w_0$ denote the real and imaginary parts of initial conditions, respectively. }
\label{tab:differential_equations}
\end{table}

We demonstrate the effectiveness of WG-IDENT through comprehensive experiments\footnote{Our code is available at GitHub repository
\url{https://github.com/ChengTang1205/WG_IDENT_demo}.
}, with additional numerical results provided in \ref{appendix_comparisonmtd}. First, we outline the settings used in these experiments.

\textbf{Tested PDE models:}~We evaluate our method on the PDEs listed in Table \ref{tab:differential_equations}. For the complex function $u(x,t)$ in  Schr\"{o}dinger and NLS equations, we denote $u(x,t)=v(x,t)+\sqrt{-1}w(x,t)$ with $v(x,t)$ being the real part and $w(x,t)$ being the imaginary part.

\textbf{Feature dictionary:}~Our dictionary comprises polynomials up to degree 3 and their derivatives up to the fourth order. In the first four numerical examples — the advection-diffusion equation, viscous Burgers' equation, KdV equation, and KS equation — we use a feature matrix consisting of 16 distinct feature groups, including the terms: 1, $u$, $u^2$, $u^3$, and their first four derivatives. For the last two examples, involving the Schr\"{o}dinger equation (Sch) and the Nonlinear Schr\"{o}dinger equation (NLS), which contain both real and imaginary components, we extend our dictionary by adding 30 additional terms involving imaginary components. This expansion increase the total number of terms to 46, including $w$, $w^2$, $w^3$, $vw$, $v^2w$ and $vw^2$ along with their first four spatial derivatives.

\textbf{Noise model:}~All of our experiments are conducted with both clean and noisy data. 

Let \[U_{\text{max}} = \max_{j=1,\ldots,N^x} \max_{m=1,\ldots,N^t} U_j^m\] and 
\[U_{\text{min}} = \min_{j=1,\ldots,N^x} \min_{m=1,\ldots,N^t} U_j^m.\] We consider a noise model $\epsilon\sim\mathcal{N}(0,\sigma^2)$ with  standard deviation \(\sigma\)  defined as:
\[
\sigma = \frac{\sigma_{\text{NSR}}}{N^t N^x} \sum_{i=1}^{N^x} \sum_{n=1}^{N^t} \left| U_i^n - \frac{U_{\text{max}} + U_{\text{min}}}{2} \right|^2.
\]
Here, $\sigma_{\text{NSR}}\in [0,1]$ indicates the noise-to-signal ratio (NSR).

\textbf{Hyper-parameters:} We utilize seven B-spline basis functions (\(M = 7\)) for approximating varying coefficients, as defined in equation~\eqref{eq_regression_model}. To construct test functions, we manually select \(\tau_x = 3.5\) for all PDEs, except for the Nonlinear Schrödinger (NLS) equation, for which we set \(\tau_x = 4.0\). The temporal parameter \(\tau_t = 0.6\) is applied uniformly across all PDEs. During the GF-trim process, we employ a threshold \(\tau = 0.1\) to eliminate the low-contribution features of those candidate PDEs. Subsequently, for the Reduction in Residual (RR) process, we set \(L = 3\) and \(\rho^{\text{R}} = 0.01\) for all PDEs, except for the Nonlinear Schrödinger (NLS) equation and the Schrödinger equation, where we use \(\rho^{\text{R}} = 0.1\). To ensure sufficient smoothness of the test functions, the degree of B-splines used for both varying coefficients and test functions is set to \(d = 6\). 
Importantly, we further perform systematic robustness tests with respect to the spectral support parameter \(\tau_x\) and the spline degree \(p\). It  confirms that the identification results remain stable across a broad range of choices; these experiments are reported in \ref{app:tau_sensitivity}

\textbf{Evaluation Metrics:} We use the following four criteria to quantify the effectiveness of our method.

\begin{itemize}
    \item \textbf{Relative Coefficient Error}:
    \begin{equation} \label{relative_e2}
        E_2 = \frac{||\textbf{c}^* - \textbf{c}||_2}{||\textbf{c}^*||_2}.
    \end{equation}
    This metric measures the accuracy of the recovered coefficients \(\textbf{c}\) against the true coefficients \(\textbf{c}^*\) using the relative \(l_2\) norm. Specifically, we treat both vectors \(\textbf{c}\) and \(\textbf{c}^*\) as having length $KM$. The recovered coefficient vector \(\textbf{c}\) is initialized as an all-zero vector of size $KM$, with nonzero entries corresponding only to the active (identified) features.

    \item \textbf{Residual Error}:
    \begin{equation} \label{Residual_error}
        E_{\text{res}} = \frac{||\textbf{F}\textbf{c} - \textbf{b}||_2}{||\textbf{b}||_2}.
    \end{equation}
    The residual error quantifies how well the learned differential equation \(\textbf{F}\textbf{c}\) fits the observed data \(\textbf{b}\). Specifically, it measures the relative difference between the predicted values \(\textbf{F}\textbf{c}\) and the actual data \(\textbf{b}\) using the \(l_2\) norm. A lower residual error indicates a better fit of the model to the data.

    \item \textbf{True Positive Rate (TPR)}:
    \begin{equation} \label{tpr}
        \text{TPR} = \frac{|\{ l : \textbf{c}^*(l) \neq 0 \ \text{and} \ \textbf{c}(l) \neq 0 \}|}{|\{ l : \textbf{c}^*(l) \neq 0 \}|}.
    \end{equation}
    This metric computes the ratio of correctly identified non-zero coefficients to the total number of true non-zero coefficients. It quantifies the method's ability to correctly identify relevant features in the underlying differential equation.

    \item \textbf{Positive Predictive Value (PPV)}:
    \begin{equation} \label{ppv}
        \text{PPV} = \frac{|\{ l : \textbf{c}^*(l) \neq 0 \ \text{and} \ \textbf{c}(l) \neq 0 \}|}{|\{ l : \textbf{c}(l) \neq 0 \}|}.
    \end{equation}
    
    This metric measures the proportion of correctly identified non-zero coefficients out of all coefficients that the method identifies as non-zero. It indicates the rate of false positives, where a high PPV signifies that most identified features are indeed relevant.
\end{itemize}

\subsection{General performances}
\begin{figure}[t!]
    \centering
    \begin{tabular}{cccc}
    \hline
    \multicolumn{4}{c}{Advection diffusion equation}\\\hline
            $\sigma_{\text{NSR}}=0\%$ & $3\%$ & $5\%$ & $10\%$ \\
    \hline
    \multicolumn{4}{c}{Given data (noisy observations)}\\
    {\includegraphics[width=0.2\linewidth]{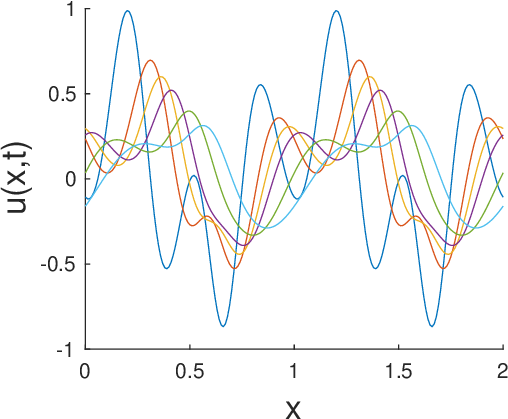}} &
        {\includegraphics[width=0.2\linewidth]{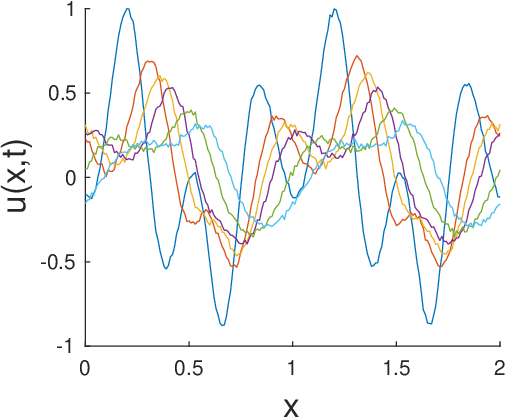}} &
        {\includegraphics[width=0.2\linewidth]{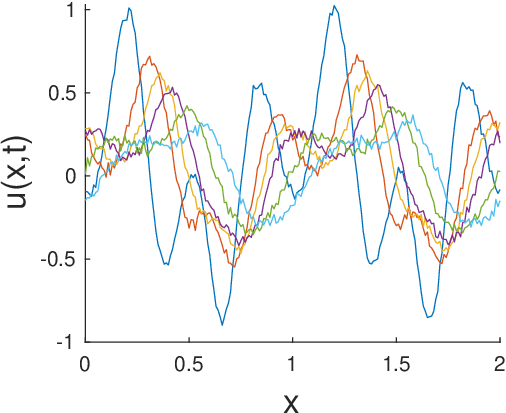}}&
        {\includegraphics[width=0.2\linewidth]{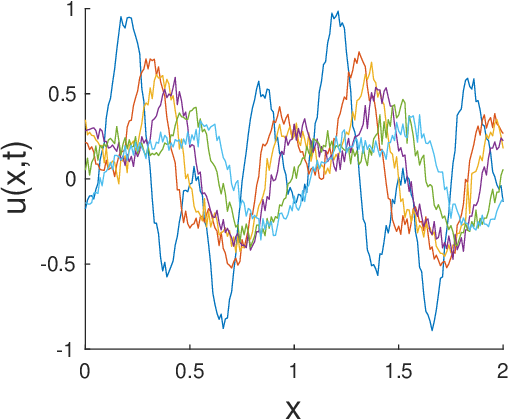}}\\
    \hline 
    \multicolumn{4}{c}{Simulated solution from the identified PDE}\\
        {\includegraphics[width=0.2\linewidth]
        {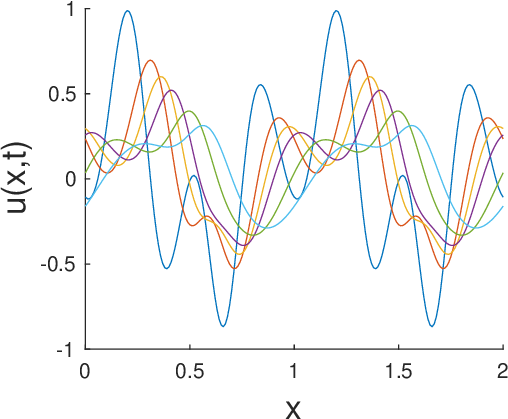}} &
        {\includegraphics[width=0.2\linewidth]{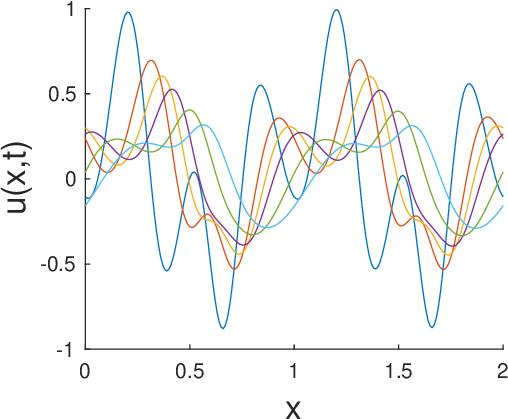}} &
        {\includegraphics[width=0.2\linewidth]{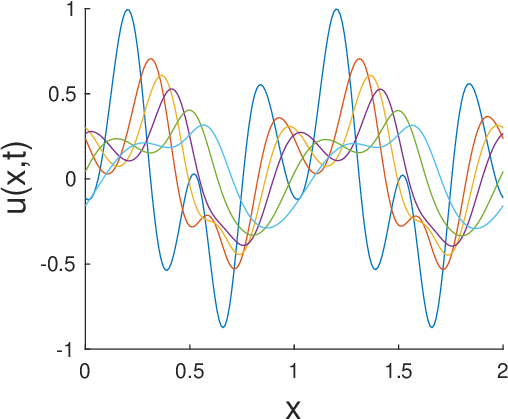}}&
        {\includegraphics[width=0.2\linewidth]{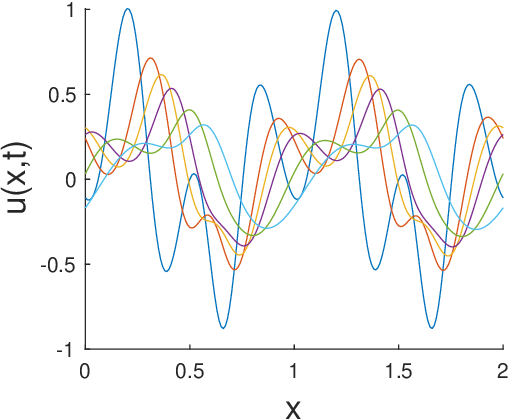}}\\
    \hline 
    \multicolumn{4}{c}{Spatial profile at $t=0.03$ (blue = true solution, red = identified solution)}\\
        {\includegraphics[width=0.2\linewidth]{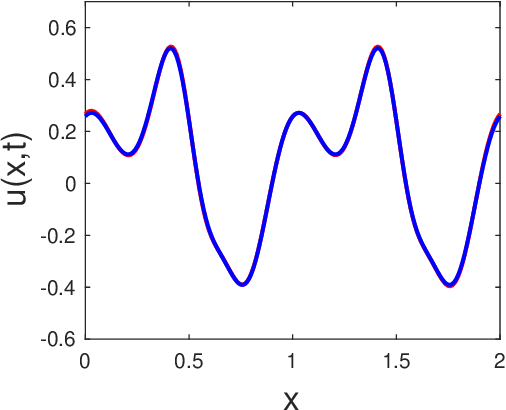}} &
        {\includegraphics[width=0.2\linewidth]{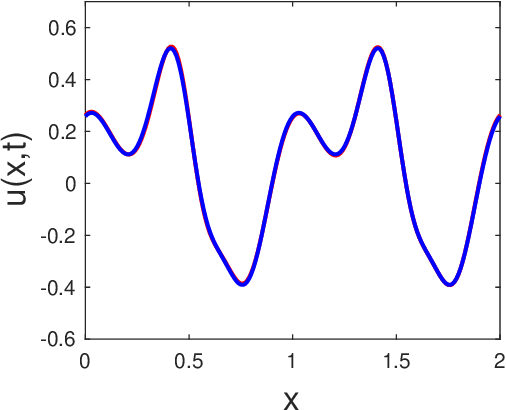}} &
        {\includegraphics[width=0.2\linewidth]{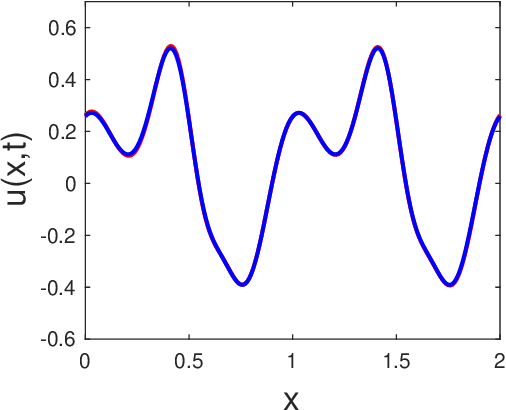}}&
        {\includegraphics[width=0.2\linewidth]{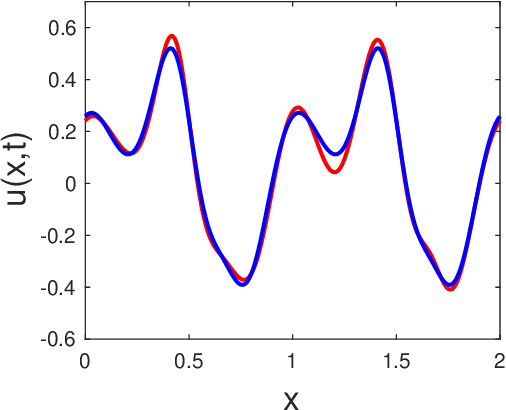}}\\
    \hline 
    \multicolumn{4}{c}{Absolute error}\\
        {\includegraphics[width=0.2\linewidth]{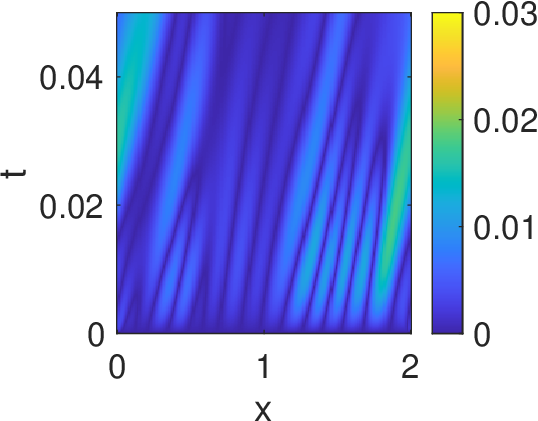}} &
        {\includegraphics[width=0.2\linewidth]{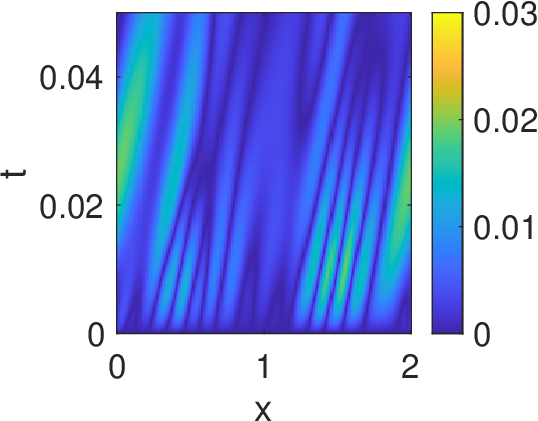}} &
        {\includegraphics[width=0.2\linewidth]{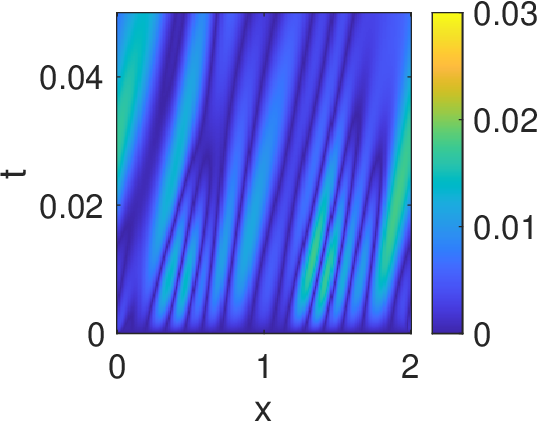}}&
        {\includegraphics[width=0.2\linewidth]{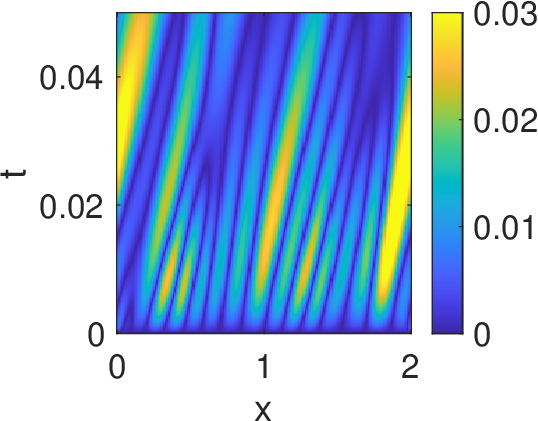}}\\
    \hline
    \end{tabular}
    \caption{{\textbf{Advection--diffusion equation under increasing observation noise.}
Columns correspond to $\sigma_{\text{NSR}}=0\%, 3\%, 5\%, 10\%$.
Row~1 shows the given (noisy) observations as spatial trajectories at time frames $t\in\{0.01,0.02,0.025,0.03,0.04,0.05\}$; 
Row~2 shows trajectories at the same time frames simulated from the PDE identified by WG-IDENT; 
Row~3 shows a 1D spatial profile at $t=0.03$ overlaying the true solution (blue) and identified curves (red); 
Row~4 presents the point-wise absolute errors between the simulated and true solutions across the computational domain $x\in[0,2]$ and $t\in[0,0.05]$.}}
    \label{fig:2d3d_transport}
\end{figure}

\begin{figure}[t!]
    \centering
    \begin{tabular}{cccc}
    \hline
    \multicolumn{4}{c}{Viscous Burgers' equation}\\\hline
        $\sigma_{\text{NSR}}=0\%$ & 3\% & 5\% & 10\% \\
    \hline
    \multicolumn{4}{c}{Given data}\\
    {\includegraphics[width=0.2\linewidth]{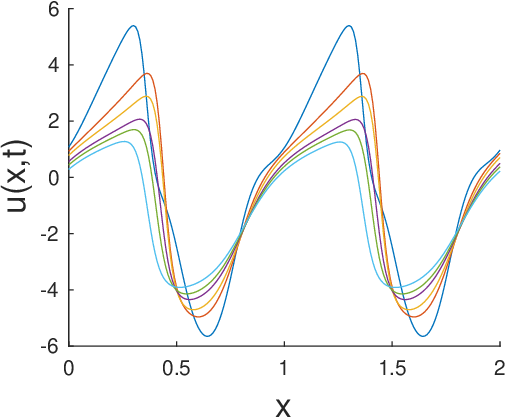}} &
        {\includegraphics[width=0.2\linewidth]{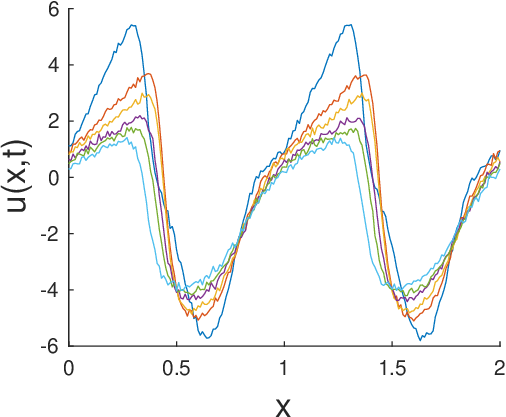}} &
        {\includegraphics[width=0.2\linewidth]{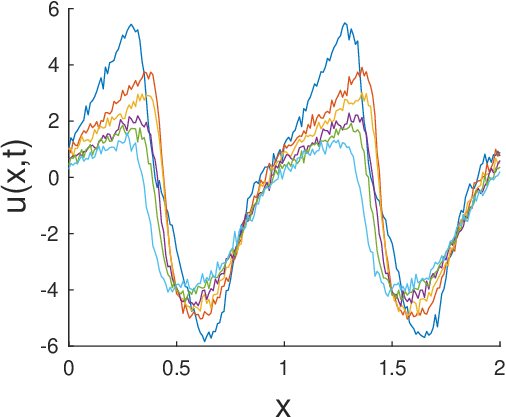}}&
        {\includegraphics[width=0.2\linewidth]{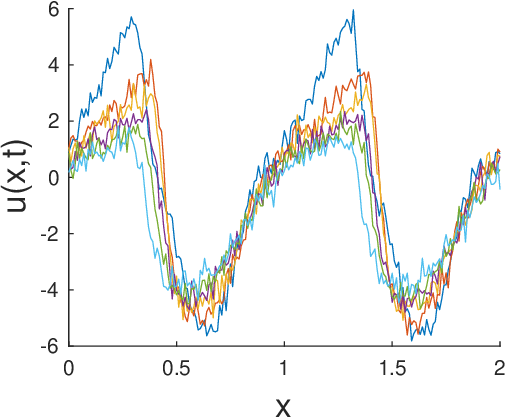}}\\
        \hline
       \multicolumn{4}{c}{Simulated solution from the identified PDE}\\
        {\includegraphics[width=0.2\linewidth]{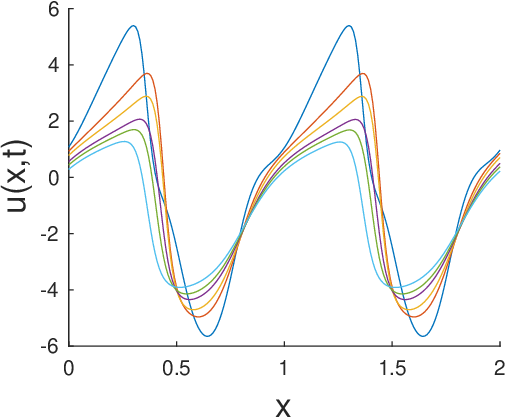}} &
        {\includegraphics[width=0.2\linewidth]{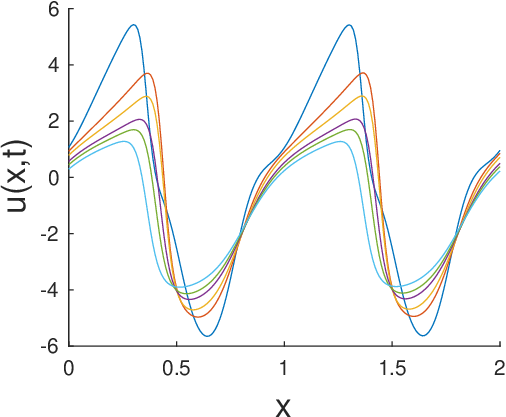}} &
        {\includegraphics[width=0.2\linewidth]{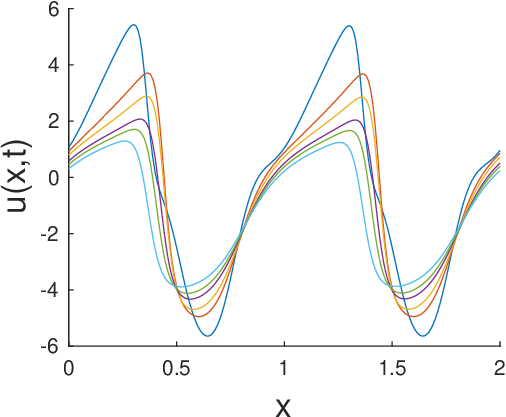}}&
        {\includegraphics[width=0.2\linewidth]{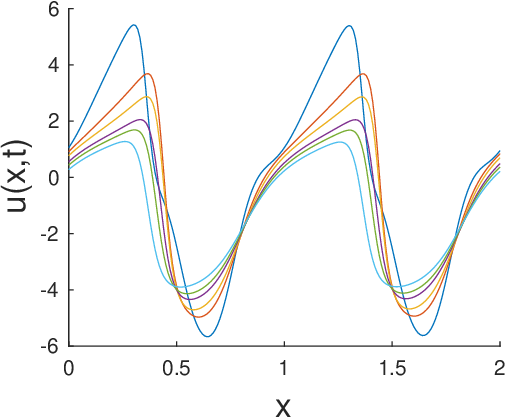}}\\
    \hline
    \multicolumn{4}{c}{Spatial profile at $t=0.10$ (blue = true solution, red = identified solution)}\\
    {\includegraphics[width=0.2\linewidth]{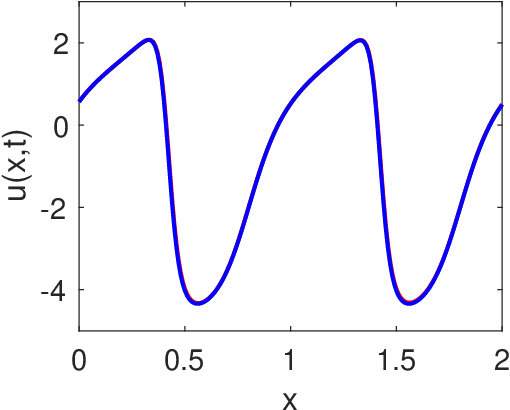}} &
    {\includegraphics[width=0.2\linewidth]{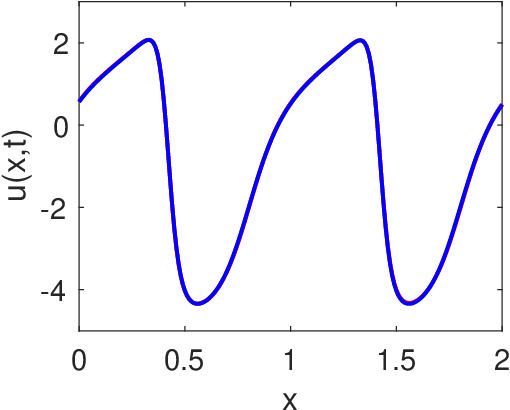}} &
    {\includegraphics[width=0.2\linewidth]{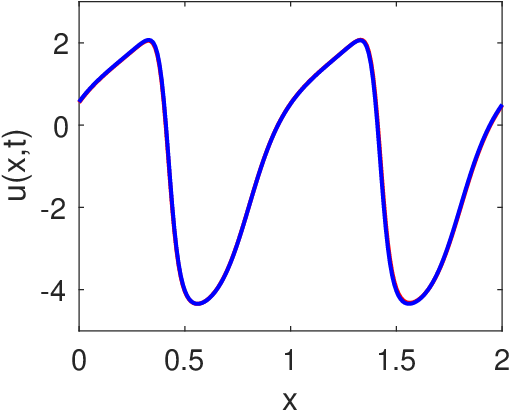}}&
    {\includegraphics[width=0.2\linewidth]{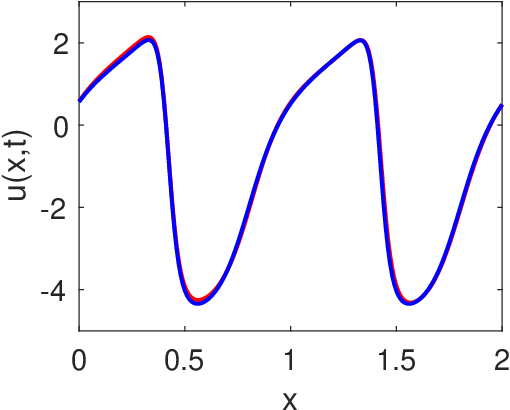}}\\
    \hline
    \multicolumn{4}{c}{Absolute error}\\
        {\includegraphics[width=0.2\linewidth]{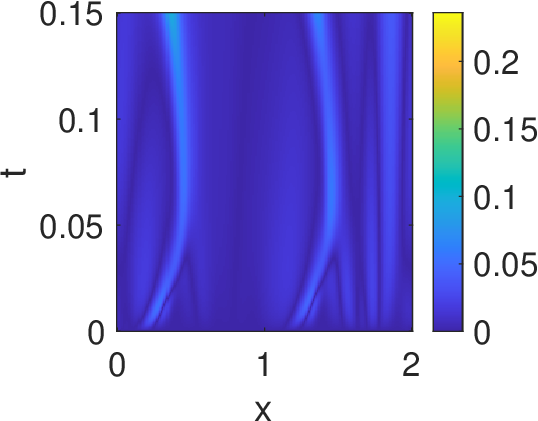}} &
        {\includegraphics[width=0.2\linewidth]{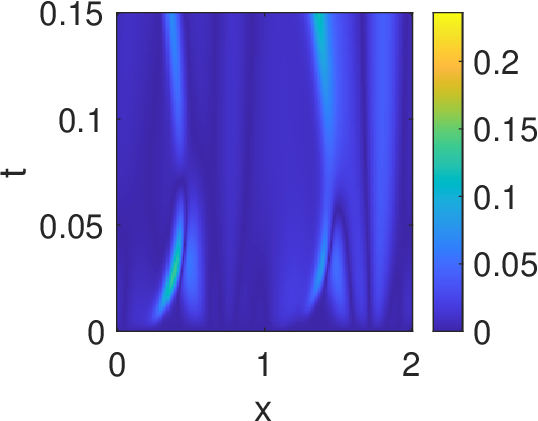}} &
        {\includegraphics[width=0.2\linewidth]{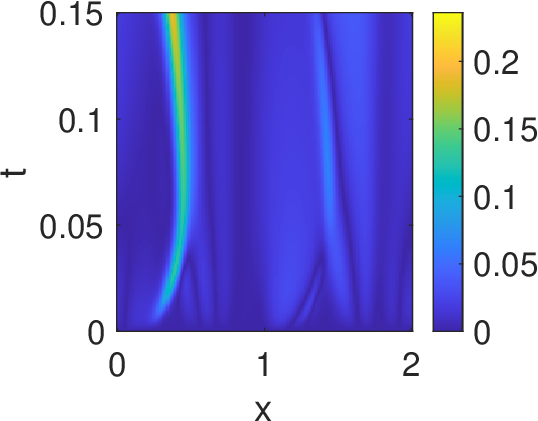}}&
        {\includegraphics[width=0.2\linewidth]{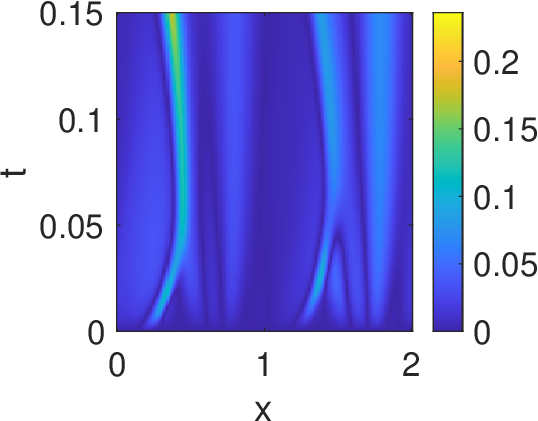}}\\
    \hline
    \end{tabular}
    \caption{\textbf{Viscous Burgers' equation under increasing observation noise.}
Columns correspond to $\sigma_{\text{NSR}}=0\%, 3\%, 5\%, 10\%$.
Row~1 shows the given (noisy) observations as spatial trajectories at time frames $t\in\{0.02, 0.05, 0.07, 0.1, 0.12, 0.15\}$; 
Row~2 shows trajectories at the same time frames simulated from the PDE identified by WG-IDENT; 
Row~3 shows a 1D spatial profile at $t=0.10$ overlaying the true (blue) and identified (red) trajectories; 
Row~4 shows the point-wise absolute errors between the simulated and true solutions across the computational domain $x\in[0,2]$ and $t\in[0,0.15]$.
    }
    \label{fig:2d3d_burgerdiff}
\end{figure}

 We demonstrate the effectiveness of WG-IDENT by identifying two PDEs with varying coefficients: the advection-diffusion equation~\eqref{eq:advection-diffusion} and the viscous Burgers' equation~\eqref{eq:viscous-burgers}.

\textbf{Advection-Diffusion Equation:} 
In the first row of Figure~\ref{fig:2d3d_transport}, we present the observed trajectories of the advection-diffusion equation, which have been contaminated with varying levels of noise. The figure shows results for noise-to-signal ratios (NSR) of 0\%, 3\%, 5\%, and 10\%. Despite the presence of noise, WG-IDENT is able to successfully identify the two true features from $16$ candidate features, even when the noise-to-signal ratio reaches 10\%. The second row of the figure displays the simulated trajectories generated from the identified PDEs under each noise condition. These simulated trajectories closely align with the true trajectories of the underlying PDE, confirming that WG-IDENT accurately captures the dynamics of the system. The third row shows the spatial profile at $t=0.03$, overlaying the true solution (blue) and identified curves (red); we can see that they closely overlap with each other under different noise levels. The fourth row shows the absolute errors between the simulated and true solutions, where the errors are small over the computational domain, indicating that WG-IDENT remains robust and precise even in the presence of substantial noise.

\textbf{Viscous Burgers' Equation:} 
In Figure~\ref{fig:2d3d_burgerdiff}, we demonstrate the performance of WG-IDENT on the viscous Burgers' equation under various noise levels. Again, using 16 candidate features, WG-IDENT accurately identifies the underlying PDE even when the noise-to-signal ratio is as high as 10\%. The first row of the figure presents the observed data trajectories, which are contaminated with different levels of noise. The second row illustrates the simulated trajectories derived from the identified PDE models corresponding to each noise condition. These simulated solutions are in good agreement with the true solutions of the system, confirming that the PDE identification method is capable of recovering the underlying dynamics despite the noise. The third row shows the spatial profile at $t=0.10$, overlaying the true solution (blue) and identified curves (red); we can see that they closely overlap with each other under different noise levels. The fourth row displays the absolute errors between the simulated solutions and the true solutions. These error maps indicate that WG-IDENT can produce solutions with small deviations from the true model, demonstrating its robustness and precision in noisy settings. 

In both case studies, advection-diffusion and viscous Burgers' equations, WG-IDENT demonstrates robust accuracy and stability, effectively recovering the correct PDE forms even under significant noise conditions. Beyond these two case studies, we also provide additional experiments on the inviscid Burgers' equation with a discontinuous solution and a more complicated form of varying coefficient (see~\ref{Appendixd4_inviscid}). These results further confirm that WG-IDENT can reliably identify the governing terms under diverse and challenging conditions. In the following section, we quantitatively investigate the performance of WG-IDENT.

\begin{figure}[t!]
    \centering
    \begin{tabular}{cccc}
    \hline
        \textbf{E\textsubscript{2}} & \textbf{E\textsubscript{res}} & \textbf{TPR} & \textbf{PPV} \\
    \hline
    \multicolumn{4}{c}{Viscous Burgers' equation}\\\hline
        \includegraphics[width=0.2\linewidth]{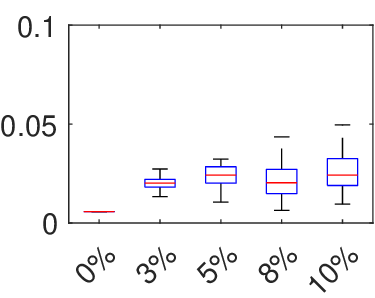} &
        \includegraphics[width=0.2\linewidth]{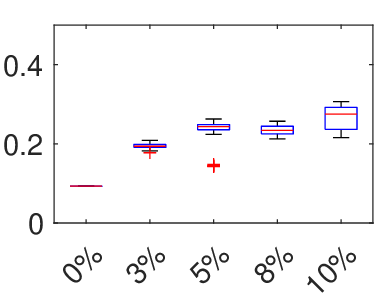}&
        \includegraphics[width=0.2\linewidth]{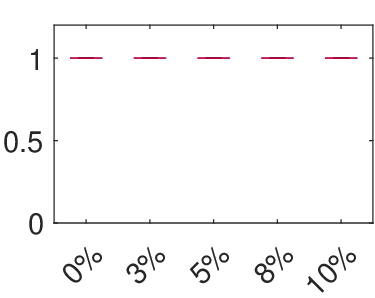}&
        \includegraphics[width=0.2\linewidth]{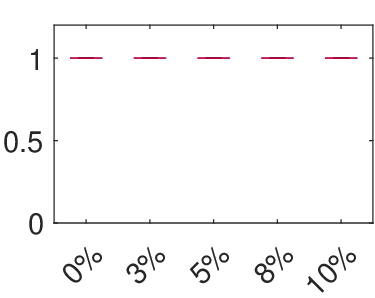} \\

    \hline 
            \multicolumn{4}{c}{Advection diffusion equation}\\\hline%
        \includegraphics[width=0.2\linewidth]{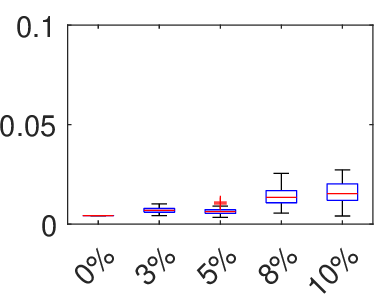} &
        \includegraphics[width=0.2\linewidth]{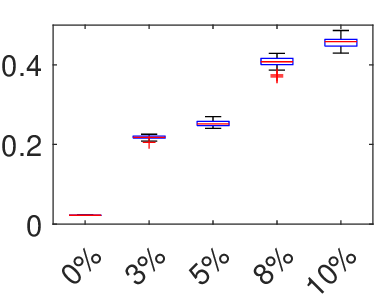}&
        \includegraphics[width=0.2\linewidth]{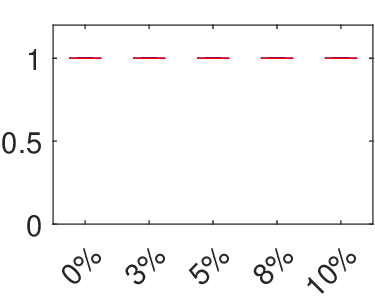}&
        \includegraphics[width=0.2\linewidth]{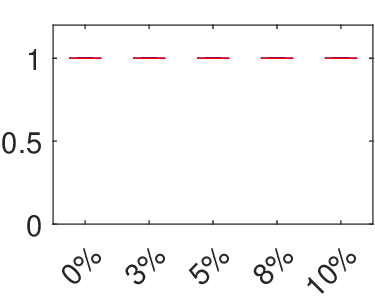}  \\

    \hline
    \multicolumn{4}{c}{KdV equation}\\\hline
        \includegraphics[width=0.2\linewidth]{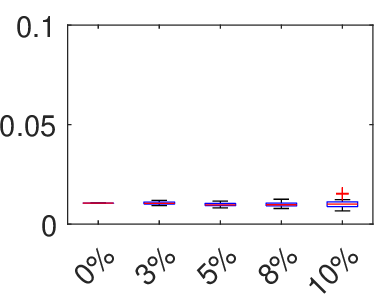} &
        \includegraphics[width=0.2\linewidth]{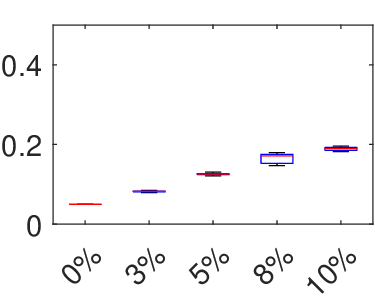}&
        \includegraphics[width=0.2\linewidth]{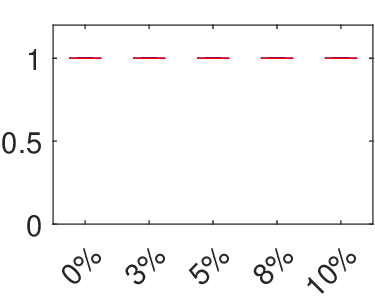}&
        \includegraphics[width=0.2\linewidth]{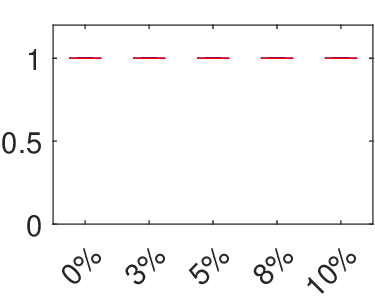}  \\

    \hline
    \multicolumn{4}{c}{KS equation}\\\hline
        \includegraphics[width=0.2\linewidth]{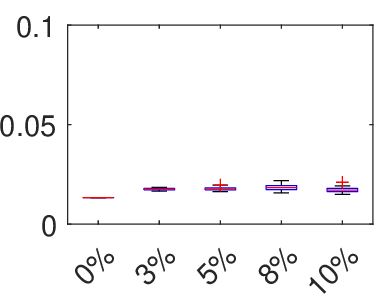} &
        \includegraphics[width=0.2\linewidth]{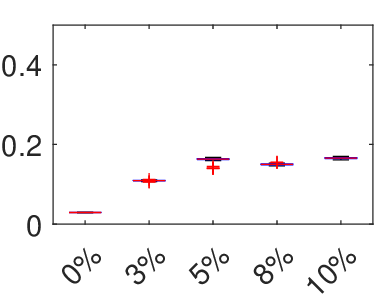}&
        \includegraphics[width=0.2\linewidth]{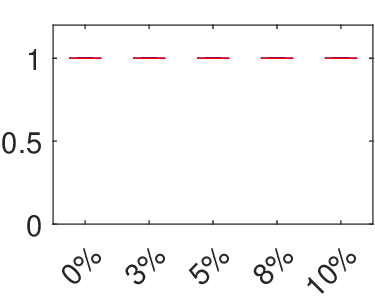}&
        \includegraphics[width=0.2\linewidth]{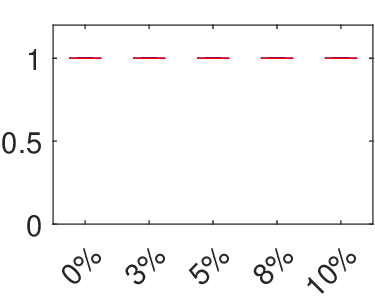}  \\

    \hline
    \end{tabular}
    \caption{$E_2$, $E_{\text{res}}$, TPR, PPV of viscous Burgers' equation (first row), advection diffusion equation (second row), KdV equation (third row), and KS equation (fourth row) with different noise levels, $0\%$, $3\%$, $5\%$, $8\%$, $10\%$. Each experiment is repeated for 50 times.}

    \label{fig:E2_Eres_TPR_PPV_burger_trans_KdV_KS}
\end{figure}

\begin{figure}[t!]
    \centering
    \begin{tabular}{cccc}
    \hline
        \textbf{E\textsubscript{2}} & \textbf{E\textsubscript{res}} & \textbf{TPR} & \textbf{PPV} \\
    \hline
    \multicolumn{4}{c}{Schr\"{o}dinger equation real part}\\\hline
        \includegraphics[width=0.2\linewidth]{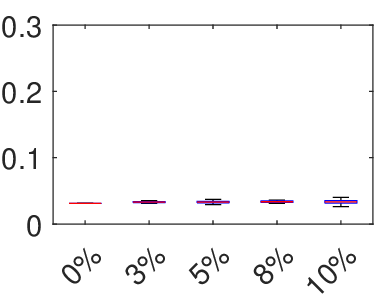} &
        \includegraphics[width=0.2\linewidth]{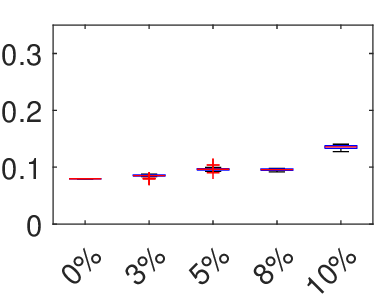}&
        \includegraphics[width=0.2\linewidth]{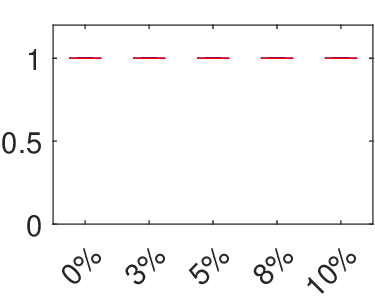}&
        \includegraphics[width=0.2\linewidth]{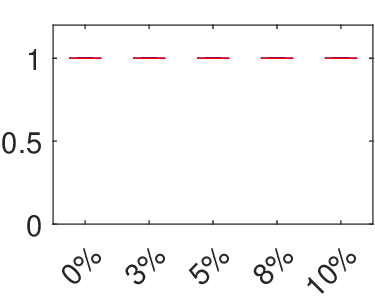} \\

    \hline 
       \multicolumn{4}{c}{Schr\"{o}dinger equation imaginary part}\\\hline
        \includegraphics[width=0.2\linewidth]{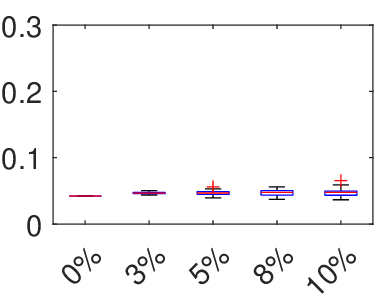} &
        \includegraphics[width=0.2\linewidth]{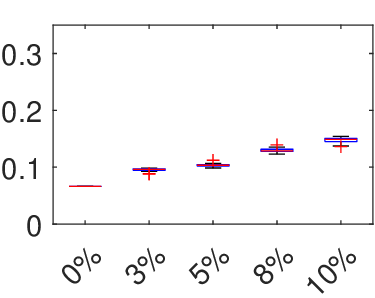}&
        \includegraphics[width=0.2\linewidth]{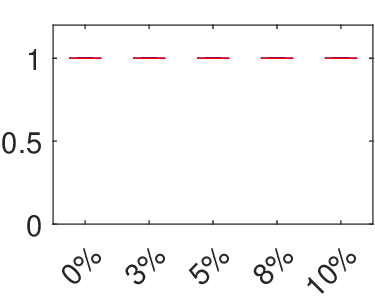}&
        \includegraphics[width=0.2\linewidth]{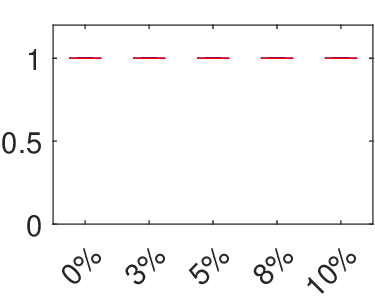}  \\

    \hline
        \multicolumn{4}{c}{Nonlinear Schr\"{o}dinger equation real part}\\\hline
        \includegraphics[width=0.2\linewidth]{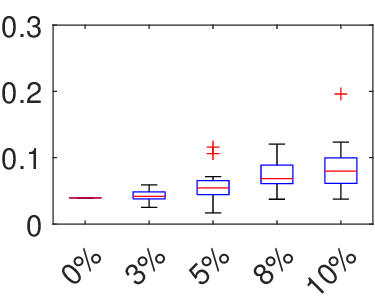} &
        \includegraphics[width=0.2\linewidth]{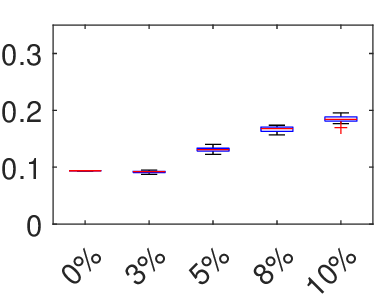}&
        \includegraphics[width=0.2\linewidth]{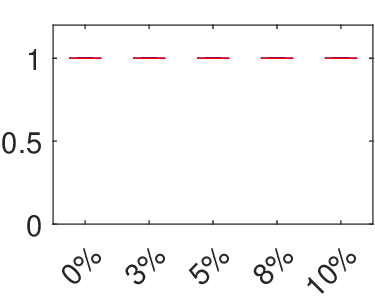}&
        \includegraphics[width=0.2\linewidth]{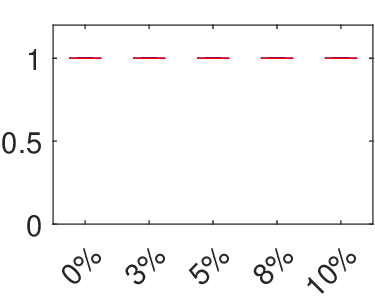}  \\

    \hline
        \multicolumn{4}{c}{Nonlinear Schr\"{o}dinger equation imaginary part}\\\hline
        \includegraphics[width=0.2\linewidth]{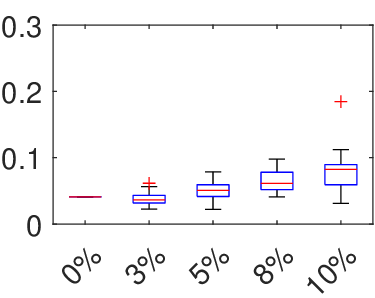} &
        \includegraphics[width=0.2\linewidth]{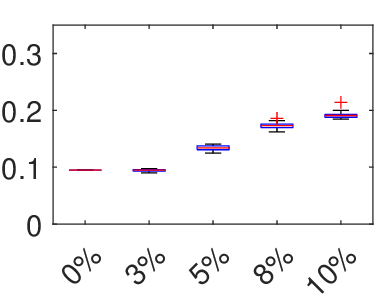}&
        \includegraphics[width=0.2\linewidth]{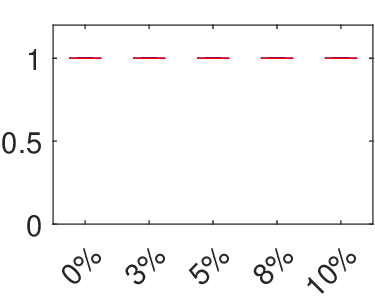}&
        \includegraphics[width=0.2\linewidth]{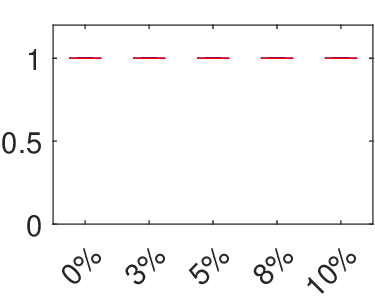}  \\

    \hline
    \end{tabular}
    \caption{$E_2$, $E_{\text{res}}$, TPR, PPV of Schr\"{o}dinger equation real part (first row), Schr\"{o}dinger equation imaginary part (second row), Nonlinear Schr\"{o}dinger equation real part (third row), and  Nonlinear Schr\"{o}dinger equation imaginary part (forth row) among different noise levels, 0\%, 3\%, 5\%, 8\%, 10\%. Each experiment is repeated 50 times.}
    \label{fig:example2}
    \label{fig:E2_Eres_TPR_PPV_Sch_NLS}
\end{figure}

\subsection{Quantitative Analysis}
We assess the identification performance of WG-IDENT on the PDEs listed in Table~\ref{tab:differential_equations} using four quantitative metrics: the relative error \( E_2 \) defined in~\eqref{relative_e2}, the residual error \( E_{\text{res}} \) defined in~\eqref{Residual_error}, the True Positive Rate (TPR) defined in~\eqref{tpr}, and the Positive Predictive Value (PPV) defined in~\eqref{ppv}. Each experiment is repeated 50 times under varying noise levels \( \sigma_{\text{NSR}} \in \{1\%, 3\%, 5\%, 8\%, 10\%\} \). Figures~\ref{fig:E2_Eres_TPR_PPV_burger_trans_KdV_KS} and~\ref{fig:E2_Eres_TPR_PPV_Sch_NLS} present box plots illustrating the distributions of these metrics. For all PDEs, \( E_2 \) and \( E_{\text{res}} \) remain consistently low across all noise levels, indicating high accuracy in coefficient recovery. Additionally, TPR and PPV values remain near 1, demonstrating WG-IDENT's ability to accurately identify true features with minimal false positives, even under significant noise.

Furthermore, we evaluate the recovery of varying coefficients for the Kuramoto-Sivashinsky (KS) equation, the Nonlinear Schrödinger (NLS) equation—which includes both real and imaginary components—and the Korteweg-de Vries (KdV) equation. Figures~\ref{fig:coef2_reconstruction_KS} through~\ref{fig:coef2_reconstruction_KdV} display the reconstructed coefficients under noise levels of 3\%, 5\%, and 10\%. Across all noise levels, the reconstructed coefficients closely match the true coefficients, confirming WG-IDENT's robustness and effectiveness in recovering the underlying dynamics even in the presence of substantial noise.

We note that in some cases (e.g., the coefficients of $w_{xx}$ in Fig.~\ref{fig:coef2_reconstruction_NLS_Ut} and $v_{xx}$ in Fig.~\ref{fig:coef2_reconstruction_NLS_Vt}), the true coefficients are constant but the reconstructions exhibit fluctuations due to noise.

 While this does not affect the correct identification of the active PDE terms, it highlights the need for future extensions that incorporate additional regularization directly on the coefficients to better distinguish constant from varying coefficients.

In addition to the results shown here, we include more experiments in the Appendix: robustness tests under higher noise levels (\ref{appendixd5:highernoiselevel}) and coefficient recovery results for additional benchmark equations, including the advection–diffusion, viscous Burgers’ and Schrödinger equations (\ref{appendixd7:coeffrecovery}, Fig.\ref{fig:coef2_reconstruction_advectiondiff}–\ref{fig:coef2_reconstruction_Sch_vt}). These supplementary results provide a more complete picture and further corroborate the accuracy and robustness of WG-IDENT.

Overall, these quantitative results validate WG-IDENT's capability to accurately identify the correct PDE structures and recover varying coefficients under noisy conditions.

\begin{figure}[t!]
    \centering
    \begin{tabular}{cccc}
    \hline
        \multicolumn{4}{c}{KS equation}\\\hline
             &  $\sigma_{\text{NSR}}=3\%$ & $5\%$ & $10\%$ \\
    \hline
        $uu_x$ & 
        \raisebox{-0.5\height}{\includegraphics[width=0.25\linewidth]{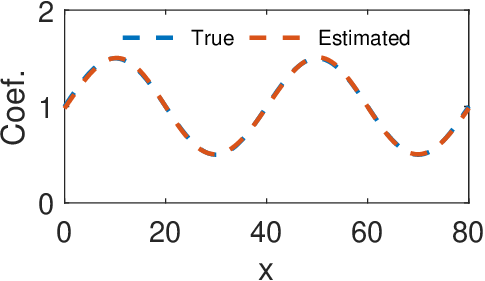}} &
        \raisebox{-0.5\height}{\includegraphics[width=0.25\linewidth]{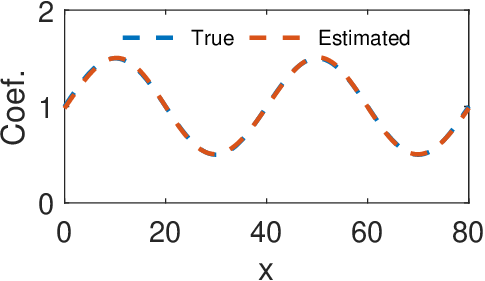}}&
        \raisebox{-0.5\height}{\includegraphics[width=0.25\linewidth]{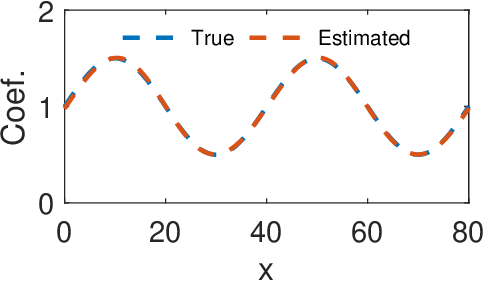}}\\
    \hline 
        $u_{xx}$ &
        \raisebox{-0.5\height}{\includegraphics[width=0.25\linewidth]{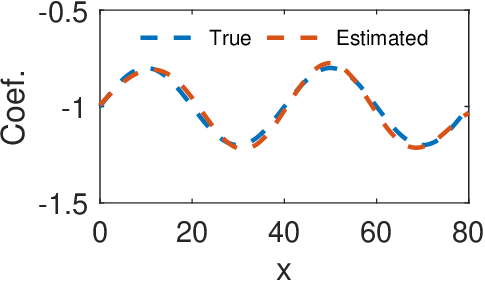}} &
        \raisebox{-0.5\height}{\includegraphics[width=0.25\linewidth]{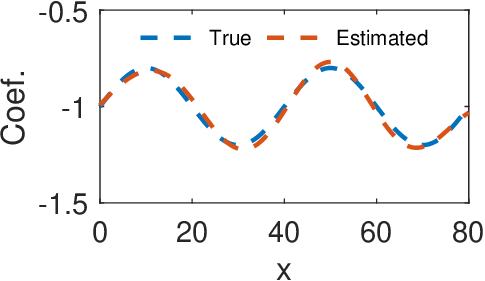}}&
        \raisebox{-0.5\height}{\includegraphics[width=0.25\linewidth]{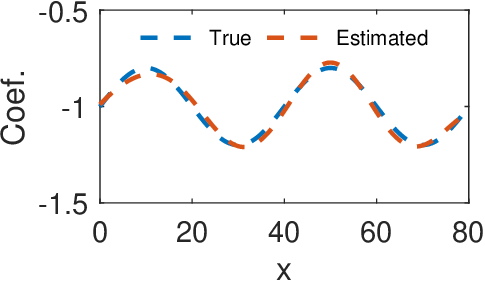}}\\
    \hline
        $u_{xxxx}$ &
        \raisebox{-0.5\height}{\includegraphics[width=0.25\linewidth]{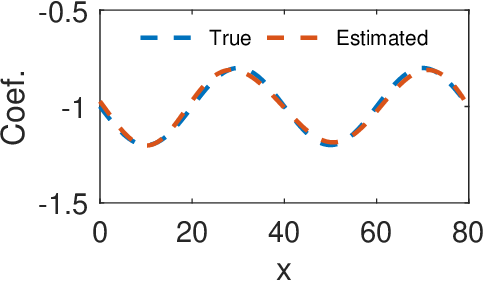}} &
        \raisebox{-0.5\height}{\includegraphics[width=0.25\linewidth]{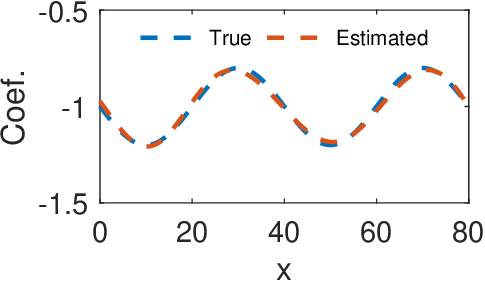}}&
        \raisebox{-0.5\height}{\includegraphics[width=0.25\linewidth]{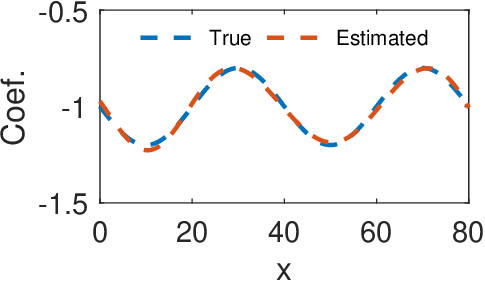}}\\
    \hline
    \end{tabular}
    \caption{Estimated coefficients of different terms in KS equation with $3\%$, $5\%$ and $10\%$ noise. The first row shows the estimated coefficients of $uu_x$. The second row shows the estimated coefficients of $u_{xx}$.  The third row shows the estimated coefficients of $u_{xxxx}$. }
    \label{fig:coef2_reconstruction_KS}
\end{figure}

\begin{figure}[t!]
    \centering
    \begin{tabular}{cccc}
    \hline
    \multicolumn{4}{c}{Nonlinear Schr\"{o}dinger equation real part}\\\hline
         &  $\sigma_{\text{NSR}}=3\%$ & $5\%$ & $10\%$ \\
    \hline
        $v^2w$ & 
        \raisebox{-0.5\height}{\includegraphics[width=0.25\linewidth]{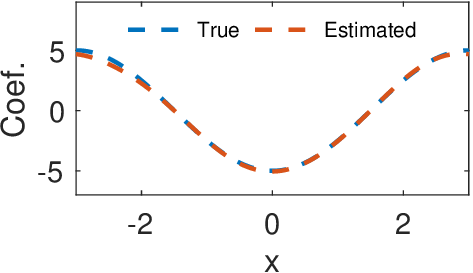}} &
        \raisebox{-0.5\height}{\includegraphics[width=0.25\linewidth]{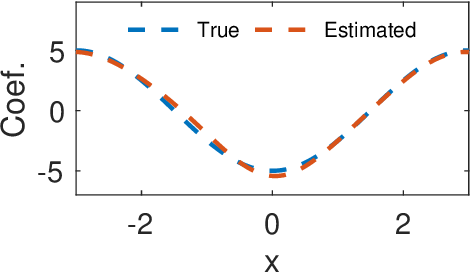}}&
        \raisebox{-0.5\height}{\includegraphics[width=0.25\linewidth]{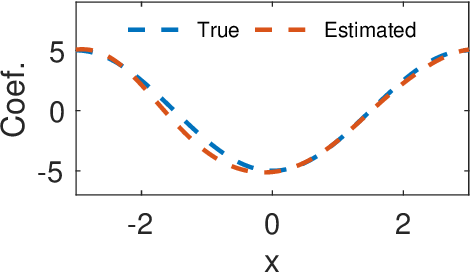}}\\
    \hline 
        $w^3$ &
        \raisebox{-0.5\height}{\includegraphics[width=0.25\linewidth]{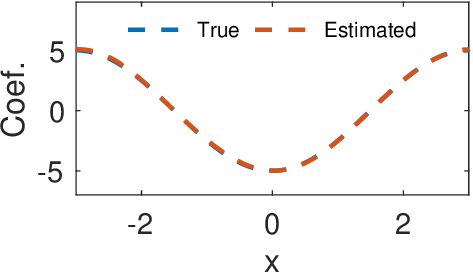}} &
        \raisebox{-0.5\height}{\includegraphics[width=0.25\linewidth]{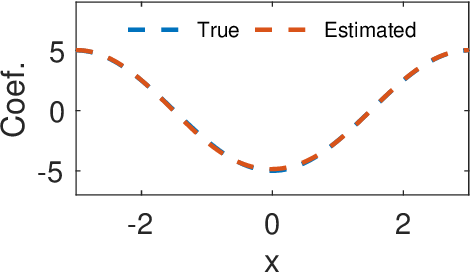}}&
        \raisebox{-0.5\height}{\includegraphics[width=0.25\linewidth]{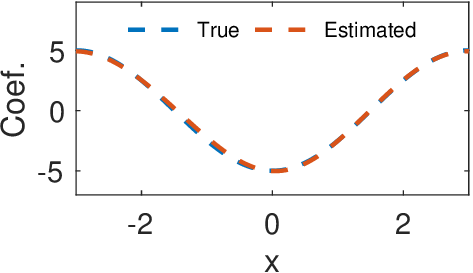}}\\
    \hline
        $w_{xx}$ &
        \raisebox{-0.5\height}{\includegraphics[width=0.25\linewidth]{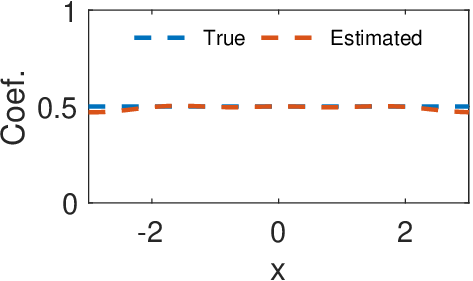}} &
        \raisebox{-0.5\height}{\includegraphics[width=0.25\linewidth]{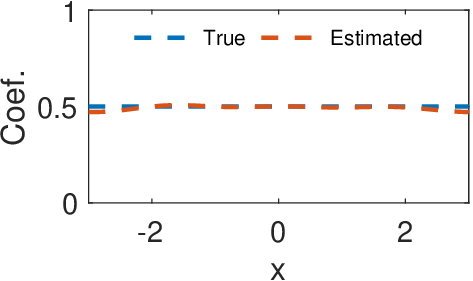}}&
        \raisebox{-0.5\height}{\includegraphics[width=0.25\linewidth]{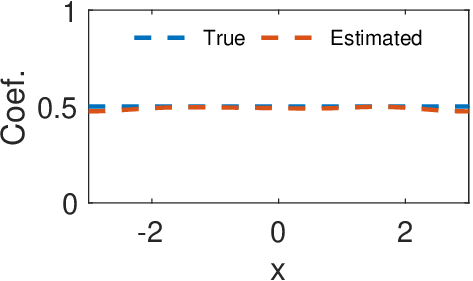}}\\
    \hline
    \end{tabular}
    \caption{Estimated coefficients of the real components of different terms in NLS equation with $3\%$, $5\%$ and $10\%$ noise. The first row shows the estimated coefficients of $v^2w$. The second row shows the estimated coefficients of $w^3$.  The third row shows the estimated coefficients of $w_{xx}$.}
    \label{fig:coef2_reconstruction_NLS_Ut}
\end{figure}

\begin{figure}[t!]
    \centering
    \begin{tabular}{cccc}
    \hline
    \multicolumn{4}{c}{Nonlinear Schr\"{o}dinger equation imaginary part}\\\hline
         & $\sigma_{\text{NSR}}=3\%$ & $5\%$ & $10\%$ \\
    \hline
        $w^2v$ & 
        \raisebox{-0.5\height}{\includegraphics[width=0.25\linewidth]{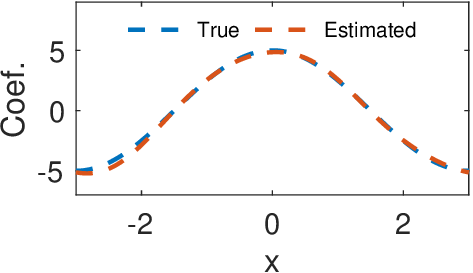}} &
        \raisebox{-0.5\height}{\includegraphics[width=0.25\linewidth]{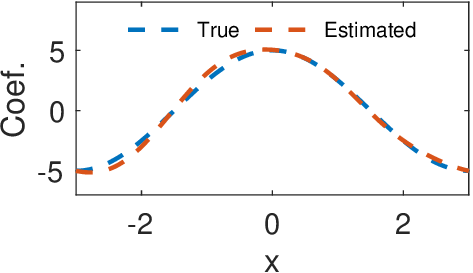}}&
        \raisebox{-0.5\height}{\includegraphics[width=0.25\linewidth]{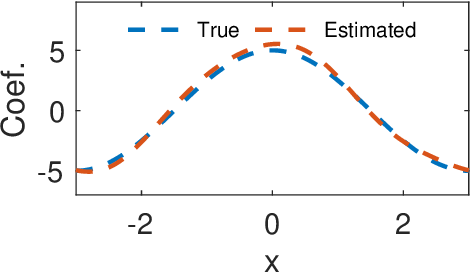}}\\
    \hline 
        $v^3$ &
        \raisebox{-0.5\height}{\includegraphics[width=0.25\linewidth]{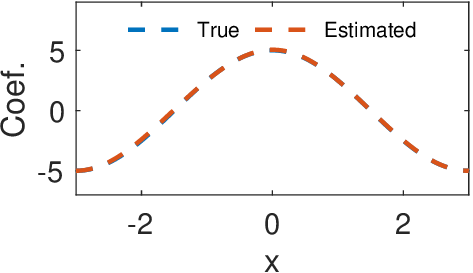}} &
        \raisebox{-0.5\height}{\includegraphics[width=0.25\linewidth]{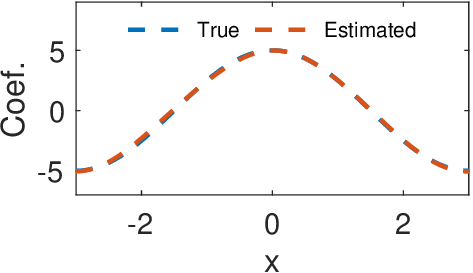}}&
        \raisebox{-0.5\height}{\includegraphics[width=0.25\linewidth]{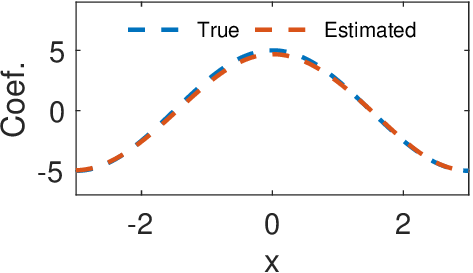}}\\
    \hline
        $v_{xx}$ &
        \raisebox{-0.5\height}{\includegraphics[width=0.25\linewidth]{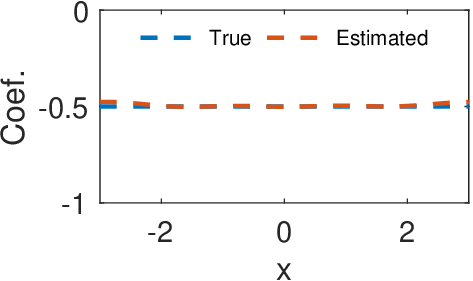}} &
        \raisebox{-0.5\height}{\includegraphics[width=0.25\linewidth]{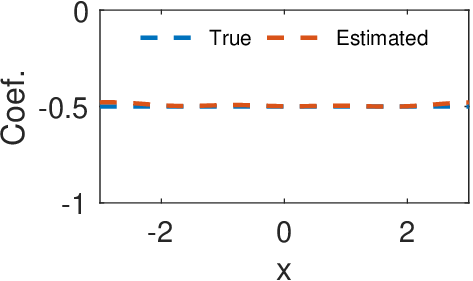}}&
        \raisebox{-0.5\height}{\includegraphics[width=0.25\linewidth]{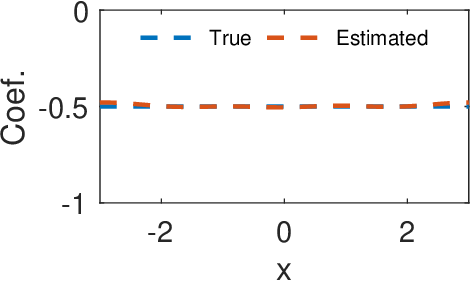}}\\
    \hline
    \end{tabular}
    \caption{Estimated coefficients of the imaginary components of different terms in NLS equation with $3\%$, $5\%$ and $10\%$ noise. The first row shows the estimated coefficients of $w^2v$. The second row shows the estimated coefficients of $v^3$.  The third row shows the estimated coefficients of $v_{xx}$.  
    }
    \label{fig:coef2_reconstruction_NLS_Vt}
\end{figure}

\begin{figure}[t!]
    \centering
    \begin{tabular}{cccc}
    \hline
    \multicolumn{4}{c}{KdV equation}\\\hline
            & $\sigma_{\text{NSR}}=3\%$ & $5\%$ & $10\%$ \\
    \hline
        $uu_x$ & 
        \raisebox{-0.5\height}{\includegraphics[width=0.25\linewidth]{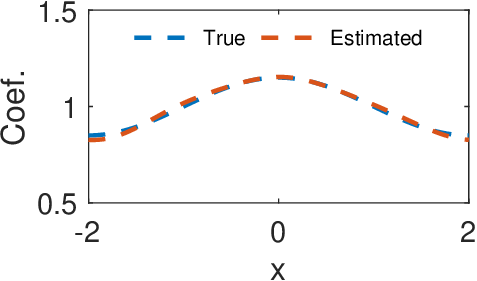}} &
        \raisebox{-0.5\height}{\includegraphics[width=0.25\linewidth]{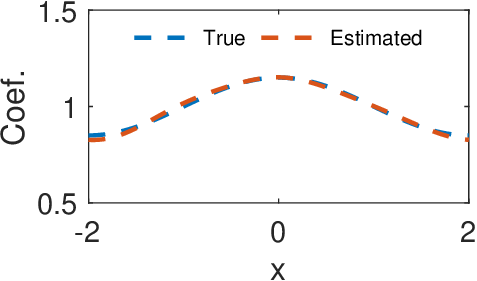}}&
        \raisebox{-0.5\height}{\includegraphics[width=0.25\linewidth]{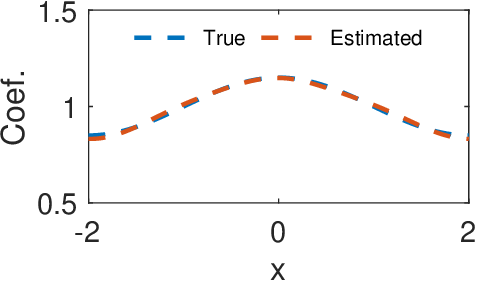}}\\
    \hline 
        $u_{xxx}$ &
        \raisebox{-0.5\height}{\includegraphics[width=0.25\linewidth]{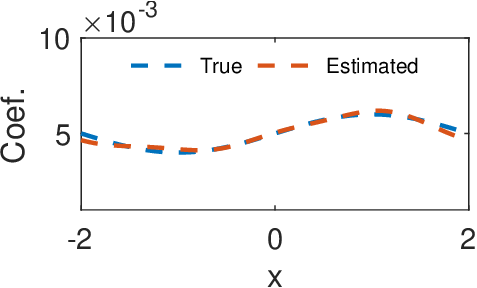}} &
        \raisebox{-0.5\height}{\includegraphics[width=0.25\linewidth]{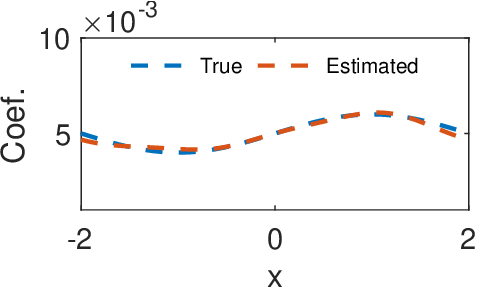}}&
        \raisebox{-0.5\height}{\includegraphics[width=0.25\linewidth]{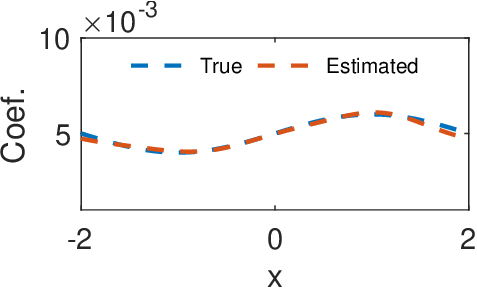}}\\
    \hline
    \end{tabular}
    \caption{Estimated coefficients of different terms in KdV equation with $3\%$, $5\%$ and $10\%$ noise. The first row shows the estimated coefficients of $uu_x$. The second row shows the estimated coefficients of $u_{xxx}$.}
    \label{fig:coef2_reconstruction_KdV}
\end{figure}

\subsection{Effects of GF-Trim} \label{sec:benefittrim}
We show that GF-Trim improves the robustness of the Reduction in Residual (RR) criterion by allowing a wider range of the threshold \(\rho^R\).

In this experiment, we use the viscous Burgers' equation with the following initial condition:
\[
u(x,0) = 2.5\left(\sin(2\pi x) + 2.5\cos\left(2\pi x + 0.25\right)\right)\;, x\in[0,2)
\]
to demonstrate the benefits of the GF-Trim process. We compare the performance of our method with and without trimming across different sparsity levels, as illustrated in Figure~\ref{fig:RRcomparison}. In the figure, the `$\times$' marker indicates incorrect PDE identifications, while the `$\circ$' marker signifies correct identifications.

\textbf{Without GF-Trim} as illustrated in Figure~\ref{fig:RRcomparison}(a): Only the candidate PDE with the correct sparsity level (\(\theta = 2\)), corresponding to a reduction in residual (RR) value of 0.0254, matches the true PDE. This implies that our method is effective only if the threshold \(\rho^R\) is set below 0.0254. Such a narrow valid range for \(\rho^R\) makes the method less robust, especially in scenarios with higher noise and perturbations.

\textbf{With GF-Trim} as illustrated in Figure~\ref{fig:RRcomparison}(b): The GF-Trim process effectively removes incorrect features in candidate PDEs with sparsity levels greater than 2. This refinement ensures that only the most significant features contributing to the true PDE are retained. Consequently, our method successfully identifies the correct PDE even when the sparsity level exceeds 2, as long as the threshold \(\rho^R\) is set below 0.3026. This substantially increases the acceptable range for \(\rho^R\), thereby enhancing the method's robustness and reducing sensitivity to the exact sparsity level.

This comparison demonstrates that GF-Trim significantly enhances the robustness and reliability of our method. By effectively eliminating spurious features and allowing for a wider selection of the threshold parameter \(\rho^R\) within RR, GF-Trim ensures accurate identification of the true PDE even in the presence of substantial noise.

\subsection{Group-wise feature contribution analysis}\label{eq_gp_trim_contribution}
In this experiment, we compare our proposed GF-Trim method, designed to remove entire low-contribution feature groups, against a more traditional ``column-wise'' trimming approach \cite{tang2022weakident}, which discards columns (features) based solely on their individual contributions. We stimulate the viscous Burger's equation with 16 candidate feature groups. In the ground-truth configuration, only two group features (labeled Group 4 and Group 8) are active.

We first apply the GPSP algorithm to identify the candidate features under the corresponding sparsity level specified. Here, with the specified sparsity level \(\theta=6\), the generated candidate group features are group 2, 4, 7, 8, 12 and 13. Then we apply the GF-Trim technique as introduced in Section \ref{sec:G_trim}. The proposed GF-Trim method correctly identified Group 4 and 8 (highlighted in blue) as the only active feature groups. By focusing on each group's aggregate contribution, GF-Trim effectively filters out spurious contributions, which might otherwise inflate the importance of individual features within low-importance groups. When trimming is performed purely on a column-by-column basis, some columns in Group 7 and Group 12 appear to have higher individual contributions than some columns in the true groups (Group 4 and 8). As shown in Figure~\ref{fig:GFtrimNTrim_comparison}, the lowest column contribution among the true feature groups is 0.0964, which is even lower than several columns in the wrong groups Group 7 and Group 12. This can mislead the column-wise method into retaining columns that largely fit noise rather than the true underlying dynamics. 

While one might expect the identified feature groups at successive sparsity levels $\theta$ to be nested (as in Group Lasso), our framework does not impose such monotonicity. As illustrated in \ref{appendixd6:gftrim}, the active groups can change across $\theta$, but the GF-Trim step can prune spurious terms and recover the correct set once $\theta$ reaches the true sparsity level.

The numerical experiment confirms that GF-Trim outperforms column-wise trimming in detecting correct features. By capturing feature importance at the group level rather than at the column level, GF-Trim effectively discards spurious columns with misleadingly high contributions. GF-Trim is a more robust and accurate method for identifying the underlying PDE structure, especially in problems where varying coefficients for inactive features give rise to columns that might appear significant but do not truly contribute to the model. 

\begin{figure}[t!]
    \centering
    \begin{tabular}{cc}
    \includegraphics[width=0.38\textwidth]{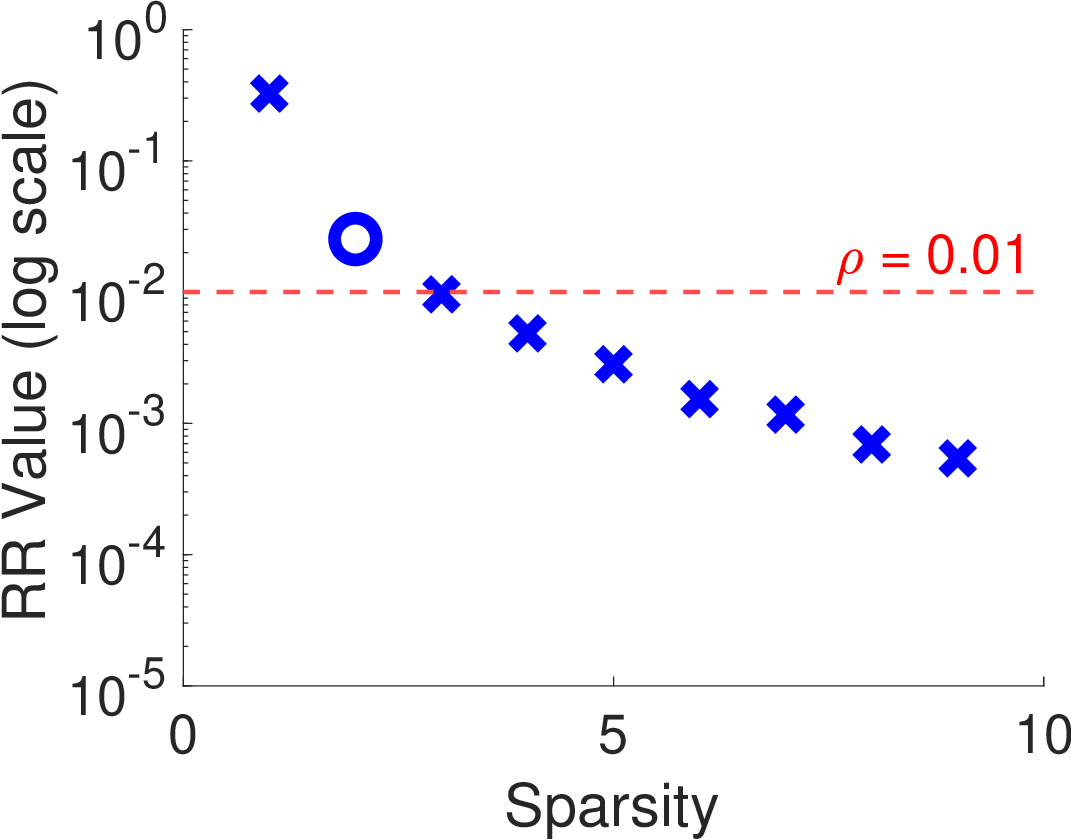}&
         \includegraphics[width=0.38\textwidth]{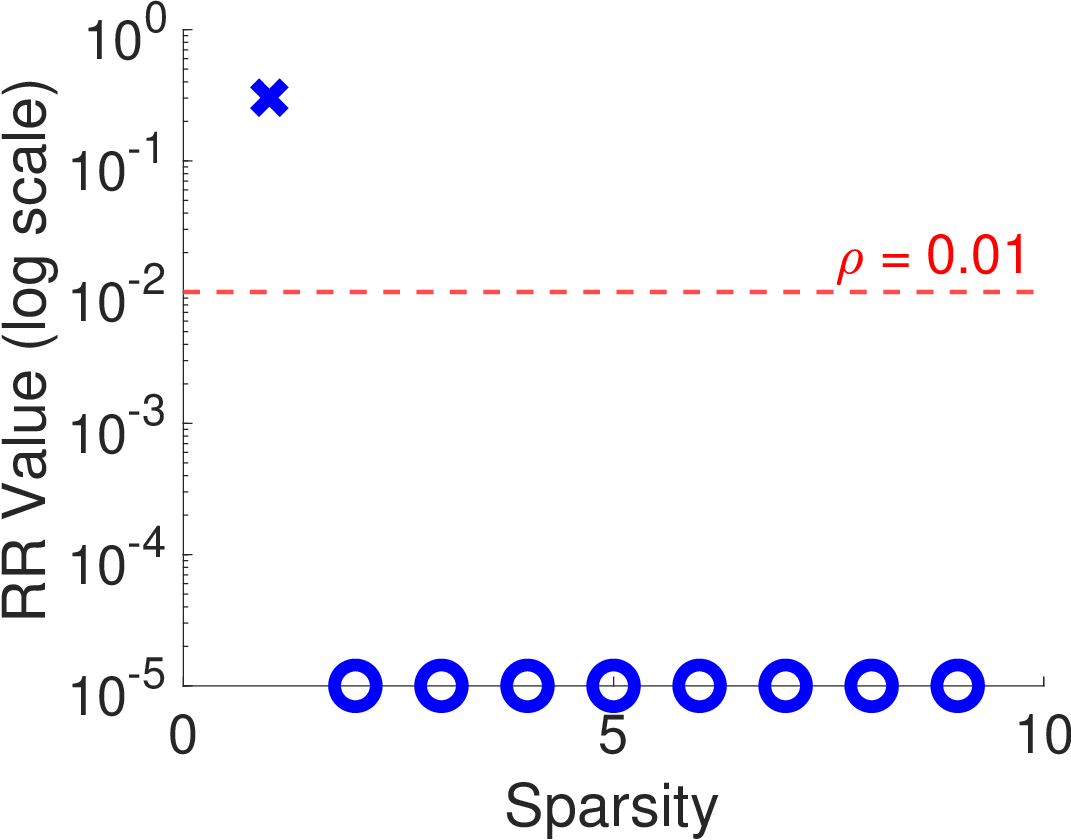} \\
         (a) RR without GF-Trim & (b) RR with GF-Trim
    \end{tabular}
    \caption{Comparison of Residual Ratio (RR) values with and without GF-Trim. (a) Without GF-Trim, (b) With GF-Trim. In both plots, an `$\times$' marker denotes an incorrect PDE candidate, while a `$\circ$' marker signifies a correct PDE identification. The y-axis is presented on a logarithmic scale to highlight differences in RR values, especially at lower magnitudes. For visualization purposes, RR values below $10^{-5}$ are represented as $10^{-5}$ to improve the clarity of the plot.}
    \label{fig:RRcomparison}
\end{figure}

\begin{figure}[t!] \centering \includegraphics[width=0.55\textwidth]{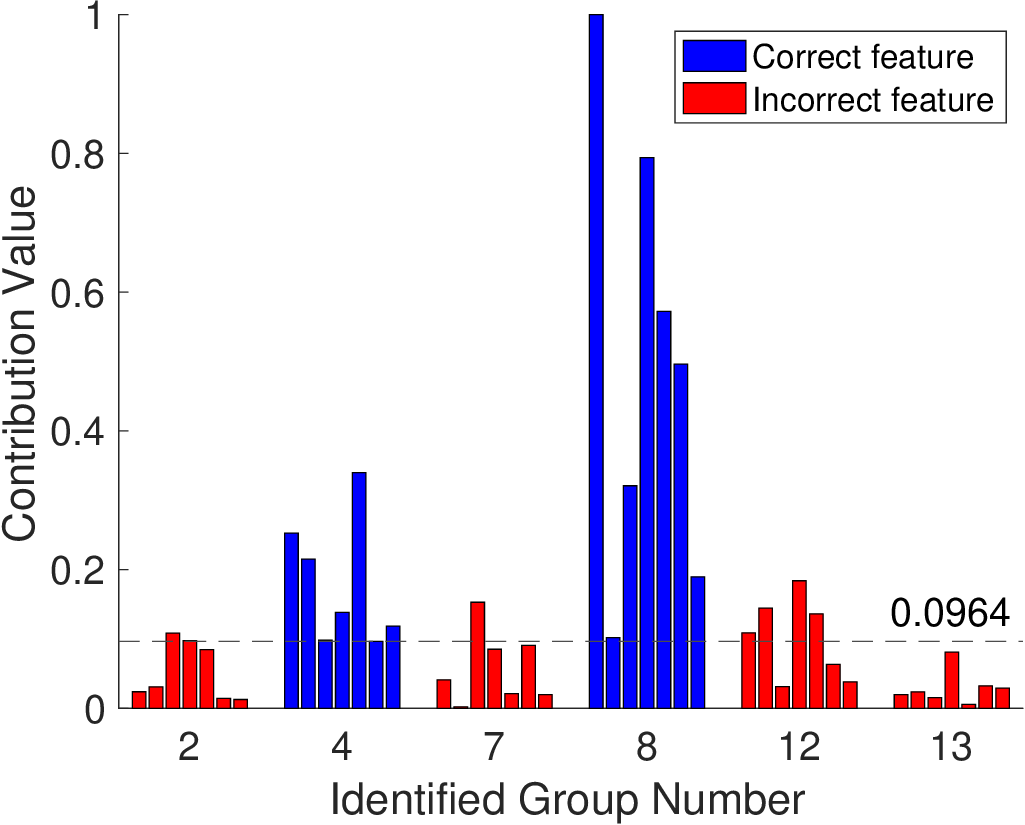} \caption{Group-wise feature contribution analysis. The plot illustrates the contribution of each feature within the identified groups (2, 4, 7, 8, 12, and 13). Correctly identified groups (4 and 8) are shown in blue, while incorrectly identified groups are shown in red. The value 0.0964 is the minimum contribution within the true feature groups, which is even lower than the contribution of certain columns within the incorrect group features. Whereas our proposed GF-Trim can still effectively identify the correct group features.} \label{fig:GFtrimNTrim_comparison} \end{figure}

\subsection{Robustness of B-spline Test Functions} \label{sec:robustness_of_bspline}

In our proposed method, B-splines are utilized as test functions, differing from the truncated polynomials employed in \cite{Messenger_2021,tang2022weakident} (see Remark~\ref{remark_test}). A key distinction between the two test functions is that B-splines form a partition of unity, ensuring that their combined influence sums exactly to one across the domain. This property provides a consistent weighting framework for feature importance throughout the domain. In contrast, truncated polynomial test functions do not satisfy the partition of unity property, potentially leading to inconsistent weighting across the domain. As a result, B-splines provide a more robust and stable framework for accurate PDE identification, especially in noisy environments.

To evaluate the robustness of B-spline test functions, we compare their performance against truncated polynomial test functions within our framework by identifying the viscous Burgers' equation under various noise levels. The results are presented in Figure~\ref{fig:comparing_WSINDy_testfnc}, where each experiment is repeated 20 times. Our findings demonstrate that B-spline test functions maintain a True Positive Rate (TPR) of 1 across all noise levels, outperforming truncated polynomial test functions. 
Furthermore, to further validate the stability of our weak formulation, we conduct an additional comparison between the space--time weak formulation and the conventional spatial-only weak formulation augmented with numerical differentiation (\ref{appendix_wfinspace}). The results show that the space--time formulation consistently yields smaller identification errors ($E_2$), confirming its superior noise resistance and numerical stability.

Overall, the comparison highlights that employing B-spline test functions significantly enhances the robustness and reliability of WG-IDENT, enabling accurate identification of the true PDE even under substantial noise conditions.

\begin{figure}[t!]
    \centering
    \begin{tabular}{cc}

        \hline
        \multicolumn{2}{c}{Viscous Burgers' equation} \\
        \hline
        \hspace{1.0cm} truncated polynomial  & B-spline \\
        \includegraphics[width=0.35\linewidth]{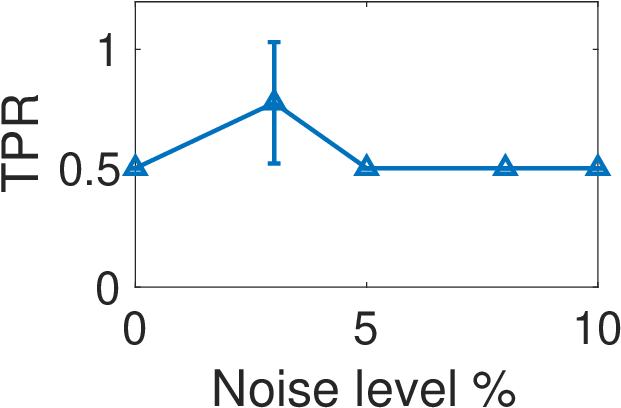} &
        \includegraphics[width=0.35\linewidth]{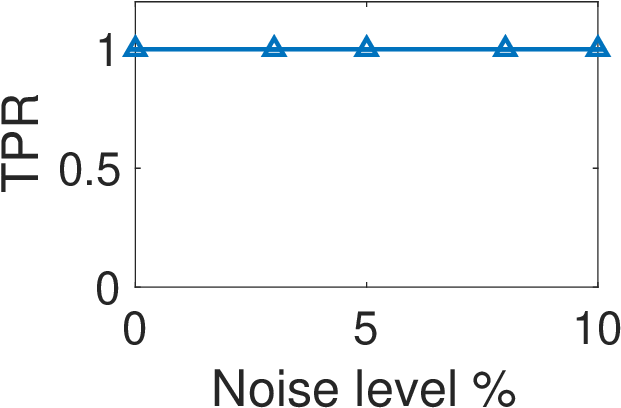} \\
        \hline
        \multicolumn{2}{c}{KS equation} \\
        \hline
        \hspace{1.0cm} truncated polynomial  & B-spline \\
        \includegraphics[width=0.35\linewidth]{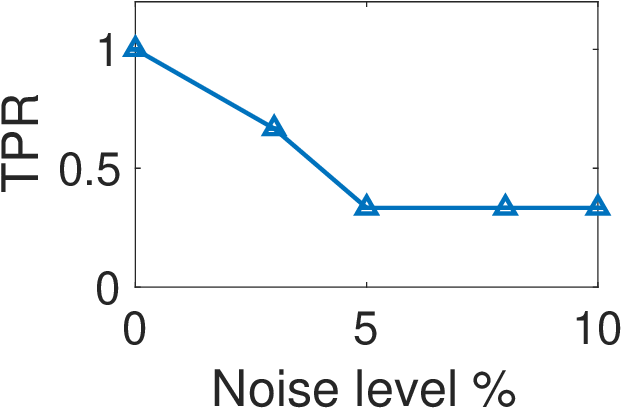} &
        \includegraphics[width=0.35\linewidth]{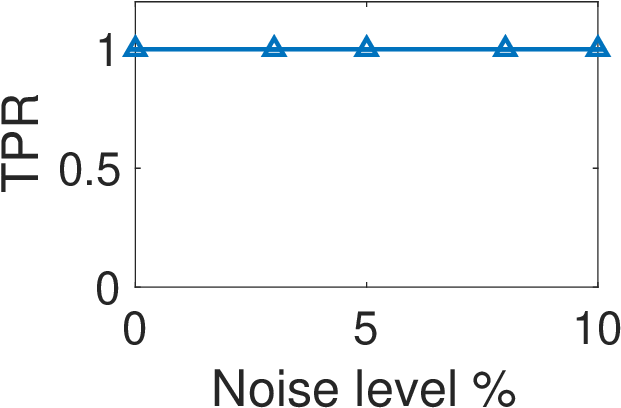}
        \\
        \hline
    \end{tabular}

    \caption{Comparison of test functions. The TPR of identifying viscous Burgers' equation and KS equation by our method with truncated polynomial test function \cite{Messenger_2021,tang2022weakident}  and B-spline test function (proposed in this paper) with various noise levels.}
    \label{fig:comparing_WSINDy_testfnc}
\end{figure}

\subsection{Comparison study with state-of-the-art methods} \label{sec:comparison_study}

We conduct a series of comprehensive comparison studies between WG-IDENT and other state-of-the-art methods for identifying PDEs with varying coefficients. Specifically, we investigate their effectiveness from the following two perspectives.
\\

\noindent\textbf{Metric evaluation:}~We compare the proposed method with some other  methods: GLASSO \cite{friedman2010note}, SGTR \cite{Rudy_sparse_reg} and rSGTR \cite{Li_Sun_Zhao_Lehman_2020}.
To ensure fairness, all methods utilize identical dictionaries: the default 16-feature dictionary for viscous Burgers' equation, and an extended 46-feature dictionary for the Schr\"{o}dinger equation and nonlinear Schr\"{o}dinger (NLS) equation, as detailed in Table~\ref{tab:differential_equations}. As evidenced in Table~\ref{tab:burgerSchNLS_complexsys}, WG-IDENT demonstrates  consistent feature identification accuracy across all noise levels and for all tested differential equations. While for other methods, GLASSO fails to identify the correct features across all noise levels. SGTR and rSGTR can identify the correct features at low noise levels and fail to identify under the high noise levels. This comparison demonstrates the robustness of the proposed method against noise. The comparison results for the other three PDEs, advection diffusion equation, KdV equation and KS equations, are shown in the Table \ref{tab:AdvectionKdVKS_eq} in the \ref{appendix_comparemethods}.\\[1pt]

\begin{table}[t!]
\footnotesize
\centering
{\centering \bfseries (a) Viscous Burgers' Equation \par}
\begin{tabular}{ccccc}
\toprule
Noise level & WG-IDENT & GLASSO & SGTR & rSGTR \\
\midrule
No Noise & $uu_x, u_{xx}$ & $\geq$ 4 terms & $uu_x, u_{xx}$ & $uu_x, u_{xx}$ \\
0.1\%  & $uu_x, u_{xx}$ & $\geq$ 4 terms & $uu_x, u_{xx}$ & $uu_x, u_{xx}$ \\
0.5\%  & $uu_x, u_{xx}$ & $\geq$ 4 terms & $uu_x, u_{xx}$ & $uu_x, u_{xx}$ \\
1\%    & $uu_x, u_{xx}$ & $\geq$ 4 terms & $uu_x, u_{xx}$ & $uu_x, u_{xx}$ \\
5\%    & $uu_x, u_{xx}$ & $\geq$ 4 terms & $\geq$ 4 terms & $\geq$ 4 terms  \\
8\%    & $uu_x, u_{xx}$ & $\geq$ 4 terms & $\geq$ 4 terms & $\geq$ 4 terms  \\
10\%   & $uu_x, u_{xx}$ & $\geq$ 4 terms & $\geq$ 4 terms & $\geq$ 4 terms  \\
\bottomrule
\end{tabular}
\vspace{0.5cm} 

{\centering \bfseries (b) Schr\"{o}dinger Equation (Real component) \par}
\begin{tabular}{ccccc}
\toprule
Noise level & WG-IDENT & GLASSO & SGTR & rSGTR \\
\midrule
No Noise & $w, w_{xx}$ & $\geq 4\ \text{terms}$ & $w, w_{xx}$ & $w, w_{xx}$ \\
0.1\%  & $w, w_{xx}$ & $\geq 4\ \text{terms}$ & $w, w_{xx}$ & $w, w_{xx}$ \\
0.5\%  & $w, w_{xx}$ & $\geq 4\ \text{terms}$ & $w, w_{xx}, w_{xxxx}$ & $w, w_{xx}, w_{xxxx}$ \\
1\%    & $w, w_{xx}$ & $\geq 4\ \text{terms}$ & $\geq 4\ \text{terms}$ & $\geq 4\ \text{terms}$ \\
5\%    & $w, w_{xx}$ & $\geq 4\ \text{terms}$ & $w, w_x, w^2, w^3$ & $w, w_x, w^2, w^3$  \\
8\%    & $w, w_{xx}$ & $\geq 4\ \text{terms}$ & $w, w^2, w^3$ & $w, w^2, w^3$  \\
10\%   & $w, w_{xx}$ & $\geq 4\ \text{terms}$ & $w$ & $w$  \\
\bottomrule
\end{tabular}
\vspace{0.5cm} 

{\centering \bfseries (c) Nonlinear Schr\"{o}dinger (NLS) Equation (Real component) \par}
\begin{tabular}{ccccc}
\toprule
Noise level & WG-IDENT & GLASSO & SGTR & rSGTR \\
\midrule
No Noise & $w_{xx}, w^3, v^2w$ & $\geq 5\ \text{terms}$ & $w_{xx}, w^3, v^2w$ & $w_{xx}, w^3, v^2w$ \\
0.1\%  & $w_{xx}, w^3, v^2w$ & $\geq 5\ \text{terms}$ & $w_{xx}, w^3, v^2w$ & $w_{xx}, w^3, v^2w$ \\
0.5\%  & $w_{xx}, w^3, v^2w$ & $\geq 5\ \text{terms}$ & $w_{xx}, w^3, v^2w$ & $w_{xx}, w^3, v^2w$ \\
1\%    & $w_{xx}, w^3, v^2w$ & $\geq 5\ \text{terms}$ & $w_{xx}, w^3, v^2w$ & $w_{xx}, w^3, v^2w$ \\
5\%    & $w_{xx}, w^3, v^2w$ & $\geq 5\ \text{terms}$ & $w_{xx}, w^3$ & $w_{xx}, w^3$  \\
8\%    & $w_{xx}, w^3, v^2w$ & $\geq 5\ \text{terms}$ & $w_{xx}, w^3$ & $w_{xx}, w^3$  \\
10\%   & $w_{xx}, w^3, v^2w$ & $\geq 5\ \text{terms}$ & $w_{xx}, w^3$ & $w_{xx}, w^3$  \\
\bottomrule
\end{tabular}
\caption{Comparison of different methods for identifying (a) viscous Burgers' equation, (b) Schr\"{o}dinger equation, and (c) Nonlinear Schr\"{o}dinger (NLS) equation. The table presents the identified features using WG-IDENT (the proposed method), GLASSO, SGTR, and rSGTR across various noise levels. WG-IDENT consistently identifies the correct features at all noise levels and exhibits superior robustness compared to the other methods. For the Schr\"{o}dinger equation and NLS equation, the table presents results for real component. The results for the imaginary component are similar.}
\label{tab:burgerSchNLS_complexsys}
\end{table}

\noindent\textbf{Dictionary size.} We investigate the influence of different dictionary sizes on the identification results of the proposed method, WG-IDENT, and compare our method with GLASSO \cite{friedman2010note}, SGTR \cite{Rudy_sparse_reg}, and rSGTR \cite{Li_Sun_Zhao_Lehman_2020} across varying noise levels.
We consider three dictionaries: Dictionary I comprises 7 features, including partial derivatives of $u$ and $u^2$ up to the second order. Dictionary II is our default dictionary, consisting of 16 features, incorporating partial derivatives of $u$, $u^2$ and $u^3$ up to the fourth order. Dictionary III contains 43 features, covering partial derivatives of $u$, $u^2$, $u^3$, $u^4$, $u^5$ and $u^6$ up to the sixth order.  The results by different methods on identifying viscous Burgers' equation are presented in Table \ref{tab:comparison1_dic}. WG-IDENT can accurately identify all correct features across different noise level among the three dictionaries. In Dictionary I. all the three methods can identify the correct features across different noise levels. GLASSO completely fails in Dictionaries II and III regardless of noise. Both SGTR and rSGTR successfully identify correct features across all three dictionaries under noise-free conditions. However, their performance degrades significantly with increasing noise levels—they maintain accuracy only in Dictionary II under 1\% noise level. Notably, our method does not require parameter adjustments for different noise levels or dictionary sizes. In contrast, other methods need adjusting hyper-parameters in order to successively identify the PDE. 

We apply the same procedures to identify advection diffusion equation, with results summarized in Table \ref{tab:comparison2_dic}. With clean data, all methods correctly identify the true PDE. Under 3\% noise level, GLASSO, SGTR, and rSGTR remain successful. At higher noise levels of 8\% and 10\%, they succeed only with Dictionary I. These methods also require adjustments to their differentiation techniques to handle varying noise levels and dictionary sizes.\\[1pt]

\begin{table}[t!]
\centering
\begin{tabular}{ c c c c c c}
\hline
\multicolumn{6}{c}{Viscous Burgers' equation}\\\hline
Noise level& & {WG-IDENT} & {GLASSO} & {SGTR} & {rSGTR} \\
\hline
\multirow{3}{*}{{No noise}} 
& Dict. I & $u_{xx}, uu_x$ & $u_{xx}, uu_x$ & $u_{xx}, uu_x$ & $u_{xx}, uu_x$  \\
& Dict. II & $u_{xx}, uu_x$ & $\geq$ 4 terms & $u_{xx}, uu_x$ & $u_{xx}, uu_x$  \\
& Dict. III & $u_{xx}, uu_x$ & $\geq$ 4 terms & $u_{xx}, uu_x$ & $u_{xx}, uu_x$\\
\hline
\multirow{3}{*}{{1\% noise}} 
& Dict. I & $u_{xx}, uu_x$ & $u_{xx}, uu_x$ & $u_{xx}, uu_x$ & $u_{xx}, uu_x$  \\
& Dict. II & $u_{xx}, uu_x$ & $\geq$ 4 terms & $u_{xx}, uu_x$ & $u_{xx}, uu_x$ \\
& Dict. III & $u_{xx}, uu_x$ & $\geq$ 4 terms & $\geq$ 4 terms & $\geq$ 4 terms  \\
\hline
\multirow{3}{*}{{5\% noise}} 
& Dict. I & $u_{xx}, uu_x$ & $u_{xx}, uu_x$ & $u_{xx}, uu_x$ & $u_{xx}, uu_x$  \\
& Dict. II & $u_{xx}, uu_x$ & $\geq$ 4 terms & $\geq$ 4 terms & $\geq$ 4 terms\\
& Dict. III & $u_{xx}, uu_x$ & $\geq$ 4 terms & $\geq$ 4 terms & $\geq$ 4 terms \\
\hline

\multirow{3}{*}{{10\% noise}} 
& Dict. I & $u_{xx}, uu_x$ & $u_{xx}, uu_x$ & $u_{xx}, uu_x$ & $u_{xx}, uu_x$  \\
& Dict. II & $u_{xx}, uu_x$ & $\geq$ 4 terms & $\geq$ 4 terms & $\geq$ 4 terms  \\
& Dict. III & $u_{xx}, uu_x$ & $\geq$ 4 terms & $\geq$ 4 terms & $\geq$ 4 terms \\
\hline
\end{tabular}
\caption{Experiments on identifying the viscous Burgers' equation with different dictionary sizes and various noise levels. 
Dictionary~I comprises 7 features, including partial derivatives of $u$ and $u^2$ up to the second order. 
Dictionary~II, which serves as our default setting, consists of 16 features incorporating partial derivatives of $u$, $u^2$, and $u^3$ up to the fourth order. 
Dictionary~III contains 42 features, covering partial derivatives of $u$, $u^2$, $u^3$, $u^4$, $u^5$, and $u^6$ up to the sixth order. 
The table presents identification results using WG-IDENT (the proposed method), GLASSO~\cite{friedman2010note}, SGTR~\cite{Rudy_sparse_reg}, and rSGTR~\cite{Li_Sun_Zhao_Lehman_2020}. 
WG-IDENT consistently identifies the correct terms across all dictionaries and noise levels without requiring parameter adjustments, while higher-order dictionaries demonstrate the increasing difficulty of estimating nonlinear and high-derivative features in the presence of noise.}
\label{tab:comparison1_dic}
\end{table}

\begin{table}[t!]
\centering
\begin{tabular}{ c c c c c c}
\hline
    \multicolumn{6}{c}{Advection diffusion equation}\\\hline
Noise level&  & {WG-IDENT} & {GLASSO} & {SGTR} & {rSGTR} \\
\hline
\multirow{3}{*}{{No noise}} 
& Dict. I & $u_{x}, u_{xx}$ & $u_{x}, u_{xx}$ & $u_{x}, u_{xx}$ & $u_{x}, u_{xx}$  \\
& Dict. II & $u_{x}, u_{xx}$ & $u_{x}, u_{xx}$ & $u_{x}, u_{xx}$ & $u_{x}, u_{xx}$\\
& Dict. III & $u_{x}, u_{xx}$ & $u_{x}, u_{xx}$ & $u_{x}, u_{xx}$ & $u_{x}, u_{xx}$\\
\hline
\multirow{3}{*}{{3\% noise}} 
& Dict. I & $u_{x}, u_{xx}$ & $u_{x}, u_{xx}$ & $u_{x}, u_{xx}$ & $u_{x}, u_{xx}$ \\
& Dict. II & $u_{x}, u_{xx}$ & $u_{x}, u_{xx}$ & $u_{x}, u_{xx}$ & $u_{x}, u_{xx}$ \\
& Dict. III & $u_{x}, u_{xx}$ & $u_{x}, u_{xx}$ & $u_{x}, u_{xx}$ & $u_{x}, u_{xx}$\\
\hline
\multirow{3}{*}{{8\% noise}} 
& Dict. I & $u_{x}, u_{xx}$ & $u_{x}, u_{xx}$ & $u_{x}, u_{xx}$ & $u_{x}, u_{xx}$   \\
& Dict. II & $u_{x}, u_{xx}$ & $\geq$ 4 terms & $u_{x}, u_{xxxx}$ & $u_{x}, u_{xxxx}$ \\
& Dict. III & $u_{x}, u_{xx}$ & $\geq$ 4 terms & $\geq$ 4 terms & $\geq$ 4 terms  \\
\hline
\multirow{3}{*}{{10\% noise}} 
& Dict. I & $u_{x}, u_{xx}$ & $u_{x}, u_{xx}$ & $u_{x}, u_{xx}$ & $u_{x}, u_{xx}$   \\
& Dict. II & $u_{x}, u_{xx}$ & $\geq$ 4 terms & $\geq$ 4 terms & $\geq$ 4 terms \\
& Dict. III & $u_{x}, u_{xx}$ & $\geq$ 4 terms & $\geq$ 4 terms & $\geq$ 4 terms  \\
\hline
\end{tabular}
\caption{Experiments on identifying the advection–diffusion equation with different dictionary sizes and various noise levels. 
Dictionary~I comprises 7 features, including partial derivatives of $u$ and $u^2$ up to the second order. 
Dictionary~II, which serves as our default setting, consists of 16 features incorporating partial derivatives of $u$, $u^2$, and $u^3$ up to the fourth order. 
Dictionary~III contains 42 features, covering partial derivatives of $u$, $u^2$, $u^3$, $u^4$, $u^5$, and $u^6$ up to the sixth order. The table presents identification results using WG-IDENT (the proposed method), GLASSO~\cite{friedman2010note}, SGTR~\cite{Rudy_sparse_reg}, and rSGTR~\cite{Li_Sun_Zhao_Lehman_2020}. 
WG-IDENT consistently identifies the correct terms across all dictionaries and noise levels without requiring parameter adjustments.}
\label{tab:comparison2_dic}
\end{table}

\section{Conclusion}
\label{sec:conclusion}
In this paper, we propose an effective and robust method for identifying  partial differential equations (PDEs) with varying coefficients using a weak form based group framework. 

In our formulation, B-spline bases are used to represent varying coefficients and as test functions. This choice of test functions improves the performances over other test functions in existing works based on the weak formulation. We utilize the GPSP algorithm to obtain a pool of candidate PDEs with different sparsity levels, which are further refined by GF-Trim. The optimal PDE is selected by the reduction in residual (RR) criterion.
The GF-Trim step eliminates features with low contribution scores, reducing the sensitivity of the RR with respect to hyper parameters selection.

Additionally, we demonstrate the effectiveness of our method through comprehensive experiments across various types of PDEs, and compare it with state-of-the-art methods. Our method consistently produces accurate and robust results. Ablation studies on the impact of different components of our method are also conducted to verify the robustness.  

\bibliographystyle{abbrv}
\bibliography{ref}
 
\newpage
\appendix
\section*{Appendix}

\section{B-splines with periodic boundary conditions} \label{Bspline_eqs}

To satisfy periodic boundary conditions, we extend the knot sequence by adding \( d \) knots to the left of the existing sequence, resulting in the new sequence \( z_{-d} < z_{-d+1} < \cdots < z_{-1} < z_0 < \cdots < z_G \). On these additional knots, we construct \( d \) additional B-spline functions \( \widetilde{B}_m^p \) for \( m = -d, -d+1, \ldots, -1 \), defined as follows:

\begin{equation}
\widetilde{B}_m^p(x) = 
\begin{cases}
    B_m^p(x) & \text{if } z_m \leq x < z_{m+1}, \\
    B_m^p\left(x - (z_G - z_0)\right) & \text{if } z_m + (z_G - z_0) \leq x < z_{m+1} + (z_G - z_0), \\
    0 & \text{otherwise}.
\end{cases}
\label{Periodic_extension_knots}
\end{equation}

This extension ensures that the B-spline basis functions seamlessly wrap around the domain, maintaining continuity and periodicity.

\section{More details on test function design} \label{splinespace_supplement}

In this section, we provide details on the design of the test functions \(\varphi\) used within the weak formulation. This includes computation of the critical frequencies $k_x^*,k_t^*$, and proofs of our theorems. 

\subsection{Test function construction and critical frequency computation}\label{Test_Fnc_cons}

Test function \(\varphi\) is constructed as the tensor product of univariate B-spline basis functions in the spatial and temporal domains:

\begin{equation}
\varphi_{r}(x, t) = B_{r_x}(x)B_{r_t}(t),
\end{equation}
where \( B_{r_x}(x) \) are the B-spline basis functions in the spatial domain, and \( B_{r_t}(t) \) are the B-spline basis functions in the temporal domain. This construction is illustrated in Figure~\ref{fig:testfnc2}(c). The spatial component \(B_{r_x}(x)\) satisfies periodic boundary conditions (Figure~\ref{fig:testfnc2}(a)). The temporal component \(\varphi^t(t)\) utilizes B-spline bases that vanish at endpoints (Figure~\ref{fig:testfnc2}(b)) in order to satisfy the zero Dirichlet boundary condition in time. The construction of \(B_{r_x}(x)\) and \(B_{r_t}(t)\) follows the same procedure. We focus on \(B_{r_x}(x)\) in the subsequent discussion. The same methodology applies to \(B_{r_t}(t)\).

Assume that the data \( U_i^n \) contains Gaussian noise, and the PDE solution is smooth. The Fourier transform of the noisy data along the spatial domain, \( \mathcal{F}_x(U_i^n) \), is expected to exhibit a distinct corner in its frequency spectrum. This corner represents the critical frequency \( k_x^* \) that separates the signal-dominant region from the noise-dominant region. To construct effective test functions, we identify this changepoint \( k_x^* \) by fitting a piecewise linear polynomial to the cumulative sum of the vectorized data \( |\mathcal{F}_x(U_i^n)| \). A similar analysis is conducted in the temporal domain to determine the corresponding changepoint \( k_t^* \). This method of identifying changepoints in the frequency domain to distinguish between signal and noise components has been successfully employed in other works, such as \cite{Messenger_2021, tang2022weakident}.

\subsection{Proof of Theorem \ref{prop:gaussian_bspline_moments}
}\label{proof_prop}

\begin{proof}[Proof of Theorem \ref{prop:gaussian_bspline_moments}]
For a Gaussian distribution $\rho(x)$, the zeroth moment is defined as:
$$
\int_{-\infty}^{\infty} \rho(x) dx = 1,
$$
which reflects the fact that the total area under the probability density function is 1. 
An equidistant knots B-spline $B^{p,h}(x)$ with support [$-\alpha_x$,$\alpha_x$] and knot spacing $h$ has the property that:
$$
\int_{-\alpha_x}^{\alpha_x}B^{p,h}(x) dx = \frac{2\alpha_x}{p}.
$$

To match the zeroth moment of the B-spline with the Gaussian function, we normalize this integral to 1 by scaling the B-spline. This normalization is $\frac{p}{2\alpha_x}$, making the zeroth moment of the B-spline equal to 1.

In the case of equidistant knots, we have the explicit formulas for the zero moment, first moment and second moment of the B-splines \cite{Neuman}
\begin{equation} \label{momentofBspline}
    \mu_0(p,\textbf{t})= 1, \ \ \  \mu_1(p,\textbf{t})= L_1+\frac{ph}{2}, \ \ \ \mu_2(p,\textbf{t})=\mu_1^2(p,\textbf{t})+\frac{ph^2}{12}.
\end{equation}
Here \(L_1=-\alpha_x\) and the space domain is defined in the range \([-\alpha_x,\alpha_x]\) which is symmetric around \(x=0\) and $\textbf{t} = (t_0,t_1,\ldots,t_p)$, where, $t_s = -\alpha_x+sh$, for arbitrary $s=1,2,\ldots, p$.

Next, consider the odd moments of both functions. Due to the symmetry of the Gaussian function and the equidistant knots B-spline around zero, all odd moments (first, third, fifth, etc.) are equal to zero.
It remains to show that both functions have the same second moment.

For the second moment (or variance) of the B-spline, it is given by:
$$
\mathrm{Var}_{\text{B-spline}} = \mu_2 = \int_{-\alpha_x}^{\alpha} x^2B^{p,h}(x) dx -\left( \int_{-\alpha_x}^{\alpha_x} xB^{p,h}(x) dx\right)^2.
$$
Since the first moment is zero, this simplifies to:
$$
\mathrm{Var}_{\text{B-spline}} = \int_{-\alpha_x}^{\alpha} x^2B^{p,h}(x) dx.
$$
According to (\ref{momentofBspline}), the second moment is:
$$
\mathrm{Var}_{\text{B-spline}} = \frac{ph^2}{12}.
$$
For the Gaussian function, the variance is $\sigma^2$. Given $\sigma = \frac{\sqrt{p}}{2\sqrt{3}}h$, the variance of the Gaussian function is:
$$
\sigma^2 = \left( \frac{\sqrt{p}}{2\sqrt{3}}h\right)^2 = \frac{ph^2}{12}.
$$

\end{proof}

\subsection{Proof of Theorem \ref{lemma:ft_difference_bound}} \label{proof_lemma1}
\begin{proof}[Proof of Theorem \ref{lemma:ft_difference_bound}]

We start with proving the statement when $p\geq 13$. Note that 
$$
B^{p,h}(x)=B^{p,1}(x/h).
$$
Let $g^{p,h}(x)$ be defined as in the theorem. Using the fact that 
$$
\widehat{B}^{p,1}(\omega)=\left(\frac{\sin  \omega}{ \omega}\right)^p,
$$
we have
\begin{align}
    \widehat{g}^{p,h}(\omega)=\left(\frac{\sin  h\omega}{ h\omega}\right)^p.
\end{align}
For the Gaussian function, according to Lemma \ref{thm.Gaussian_Fourier}, we have

\[
\widehat{\rho}(\omega) = \exp\left(-\frac{\omega^2 \sigma^2}{2}\right) = \exp\left(-\frac{p h^2 \omega^2}{24}\right).
\]

The distance between $\widehat{g}^{p,h}$ and $\widehat{\rho}$ can be measured by the following lemma:
\begin{lemma}[Lemma 2.1 of \cite{christensen2017bsplineapproximationsgaussiangabor}]\label{lemma.exp}
    Let $p\geq13$ be an integer. We have
    \begin{align}
        \left|\exp\left(-\frac{px^2}{6}\right)-\left(\frac{\sin x}{x}\right)^p\right|\leq
        \begin{cases}
            \frac{4}{5e^2p}\left(1+\frac{17\ln p}{7p}\right) = \frac{5}{5e^2p}(1+o(1)), & |x|<\sqrt{\frac{12\ln p}{p}},\\
            \frac{4(\ln p)^2}{5p^3}\left(1+\frac{17\ln p}{7p}\right)=\frac{4(\ln p)^2}{5p^3}(1+o(1)), & \sqrt{\frac{12\ln p}{p}}\leq|x|\leq\frac{\pi}{2}.
        \end{cases}
    \end{align}
\end{lemma}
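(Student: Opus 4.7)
The plan is to reduce the estimate to Lemma 2.1 of Christensen et al. via a change of variables, then close out the tail region $|\omega|>\pi/h$ with a triangle-inequality argument using the explicit formulas for $\widehat{\rho}$ and $\widehat{g}^{p,h}$. The final convergence statement follows by letting $p\to\infty$ in the first-region estimate, which dominates for any fixed $\omega$.

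First I would record the two explicit Fourier transforms. Using the scaling property together with the normalization $g^{p,h}=B^{p,h}/h$, one gets $\widehat{g}^{p,h}(\omega)=\bigl(\sin(h\omega/2)/(h\omega/2)\bigr)^p$ (this is the convention consistent with the tail bound $(2/(h\omega))^p$ stated in the theorem, reconciling the formula written in the ongoing proof). From Lemma \ref{thm.Gaussian_Fourier} with $\sigma=\sqrt{p}h/(2\sqrt{3})$ supplied by Theorem \ref{prop:gaussian_bspline_moments}, we have $\widehat{\rho}(\omega)=\exp(-ph^2\omega^2/24)$. Now substitute $x=h\omega/2$ in Lemma 2.1: the exponent matches since $px^2/6=ph^2\omega^2/24$, and the sinc factor matches $\widehat{g}^{p,h}(\omega)$ exactly. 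The ranges transform as $|x|<\sqrt{12\ln p/p}\Leftrightarrow |\omega|<(2/h)\sqrt{12\ln p/p}$ and $|x|\leq\pi/2\Leftrightarrow |\omega|\leq\pi/h$. Thus for $p\geq 13$, the two inner-region bounds follow directly from the two cases of Lemma 2.1.

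For the outer region $|\omega|>\pi/h$, the substitution $x=h\omega/2$ takes us outside the domain of validity of Lemma 2.1, so I would switch to the triangle inequality $|\widehat{\rho}(\omega)-\widehat{g}^{p,h}(\omega)|\leq |\widehat{\rho}(\omega)|+|\widehat{g}^{p,h}(\omega)|$. The first term is $\exp(-p(h\omega)^2/24)$. For the second, the universal bound $|\sin y|\leq 1$ gives $|\widehat{g}^{p,h}(\omega)|=|\sin(h\omega/2)/(h\omega/2)|^p\leq (2/(h|\omega|))^p$, producing the stated tail estimate, which is in fact uniform in $p\geq 1$.

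For the $p<13$ branch, the middle-region bound is not claimed — the theorem only gives the small-$\omega$ bound and the tail bound. The tail bound works for all $p$ by the same triangle-inequality argument. For the small-$\omega$ bound, I would extract the underlying Taylor-expansion argument from Lemma 2.1's proof (essentially comparing the logarithms $-px^2/6$ and $p\log\operatorname{sinc}(x)$ via a remainder-estimate on the Taylor series of $\log\operatorname{sinc}$) and verify it still gives the constant $\tfrac{4}{5e^2 p}(1+\tfrac{17\ln p}{7p})$ for $2\leq p<13$. Finally, the limit $\widehat{\rho}(\omega)-\widehat{g}^{p,h}(\omega)\to 0$ follows since, for any fixed $\omega$ and $h>0$, the threshold $(2/h)\sqrt{12\ln p/p}\to\infty$, so eventually $\omega$ lies in the inner region, and the small-$\omega$ bound is $O(1/p)$.

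The main obstacle is the $p<13$ case: since Christensen et al.'s Lemma 2.1 is stated only for $p\geq 13$, one must redo the small-$x$ estimate by hand (typically by bounding $\bigl|p\log\operatorname{sinc}(x)+px^2/6\bigr|$ using the alternating Taylor series of $\log\operatorname{sinc}$ and then combining with $|e^a-e^b|\leq e^{\max(a,b)}|a-b|$) and check that the same numerical constant survives — a calculation that is elementary but tedious. A minor secondary issue is reconciling the excerpt's formula $\widehat{B}^{p,1}(\omega)=(\sin\omega/\omega)^p$ (which would misalign the substitution by a factor of two) with the standard normalization that produces both the Gaussian exponent $p(h\omega)^2/24$ and the tail $(2/(h\omega))^p$; the consistency check forces the $h\omega/2$ substitution and clarifies the intended convention.
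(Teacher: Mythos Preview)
Your proposal does not prove the stated lemma; it proves Theorem~\ref{lemma:ft_difference_bound} instead. The quoted statement is Lemma~2.1 of Christensen et al.\ (labeled \texttt{lemma.exp}), which the paper does \emph{not} prove --- it is simply cited as an external result inside the proof of Theorem~\ref{lemma:ft_difference_bound}. Your write-up takes this lemma as a black box (``reduce the estimate to Lemma~2.1 via a change of variables'') and deduces bounds on $|\widehat{\rho}(\omega)-\widehat{g}^{p,h}(\omega)|$, a tail estimate for $|\omega|>\pi/h$, a $p<13$ branch, and the limit as $p\to\infty$. None of those objects appear in the lemma's statement: the lemma is purely about $\bigl|\exp(-px^2/6)-(\sin x/x)^p\bigr|$ for $p\geq 13$ and $|x|\leq\pi/2$, with no $h$, no $\omega$, no Fourier transforms, no $p<13$ case, and no limit claim.

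As a proof of Theorem~\ref{lemma:ft_difference_bound}, what you wrote is essentially identical to the paper's own argument in Appendix~B.3: the same substitution $x=h\omega/2$, the same triangle-inequality tail, and the same observation that for $p<13$ the threshold $(2/h)\sqrt{12\ln p/p}$ exceeds $\pi/h$ so only the inner and tail regions are needed. But to prove the lemma itself you would have to carry out the Taylor-remainder analysis of $\log\operatorname{sinc}(x)+x^2/6$ from Christensen et al.'s paper and track the explicit constants $\tfrac{4}{5e^2p}(1+\tfrac{17\ln p}{7p})$ and $\tfrac{4(\ln p)^2}{5p^3}(1+\tfrac{17\ln p}{7p})$ --- work you acknowledge only in passing as ``elementary but tedious'' without actually doing it.
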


Substituting $x$ by $\frac{h\omega}{2}$ in Lemma \ref{lemma.exp}, we deduce for $p\geq13$,

\begin{align}
    |\widehat{\rho}(\omega)-\widehat{g}^{p,h}(\omega)|=&\left|\exp\left(-\frac{p( h\omega)^2}{24}\right)-\left(\frac{\sin \frac{h\omega}{2}}{\frac{h\omega}{2}}\right)^p\right| \nonumber\\
    \leq&
        \begin{cases}
            \frac{4}{5e^2p}\left(1+\frac{17\ln p}{7p}\right), & |\omega| < \frac{2}{h}\sqrt{\frac{12 \ln p}{p}},\\
            \frac{4(\ln p)^2}{5p^3}\left(1+\frac{17\ln p}{7p}\right), & \frac{2}{h}\sqrt{\frac{12 \ln p}{p}} \leq |\omega| \leq \frac{\pi}{h}.
        \end{cases}
\end{align}
For $|\omega|>\frac{\pi}{h}$, clearly we have
\begin{align}
    |\widehat{\rho}(\omega)-\widehat{g}^{p,h}(\omega)|\leq \exp\left(-\frac{p(h\omega)^2}{24}\right)+\left(\frac{2}{ h\omega}\right)^p.
\end{align}

Property
\[
\lim_{p \to \infty} \left( \widehat{\rho}(\omega)-\widehat{g}^{p,h}(\omega)  \right) = 0 \quad \text{for all } \omega \in \mathbb{R}
\]
is a direct result of the upper bounds above.

When $p<13$, note that \(\frac{2}{h}\sqrt{\frac{12 \ln p}{p}}>\frac{\pi}{h}\), the upper bounds follow the result of $p\geq 13$.

\end{proof}

\subsection{Proof of Theorem \ref{thm.alphax}} \label{proof_thm}

To approximate a B-spline basis function by a Gaussian distribution with carefully chosen parameters, we properly select the standard deviation $\sigma$ of the Gaussian distribution and the support range of the B-spline basis function, denoted by $[-\alpha_x, \alpha_x]$ with $\alpha_x > 0$. By doing so, we can ensure that the B-spline and the Gaussian distribution share the same first four moments.

\begin{proof}[Proof of Theorem \ref{thm.alphax}]

We begin with the relationship between the changepoint in Fourier space and the standard deviation $\sigma$ of the Gaussian function in Fourier space. From Lemma \ref{thm.Gaussian_Fourier}, the Fourier transform of the Gaussian function $\rho(x)$ is given by:
\[
\widehat{\rho}(\omega) = \exp\left(-\frac{\omega^2 \sigma^2}{2}\right).
\]
The Fourier transform $\widehat{\rho}(\omega)$ is also a Gaussian function with standard deviation $\sigma_\omega = \frac{1}{\sigma}$ (up to scaling factors).

We aim to choose $\sigma$ such that the frequency content of $\widehat{\rho}(\omega)$ (and thus the approximated B-spline basis function) is concentrated within the signal-dominant region $(-k_x^*, k_x^*)$. Specifically, we want the changepoint $k_x^*$ to be approximately $\tau_x$ times standard deviations into the tail of $\widehat{\rho}(\omega)$:
\[
k_x^* = \tau_x \sigma_\omega = \tau_x \left(\frac{1}{\sigma}\right).
\]
Solving for $\sigma$, we get:
\[
\sigma = \frac{\tau_x}{k_x^*}.
\]
Using the relationship between $\sigma$ and $\alpha_x$ from Theorem \ref{prop:gaussian_bspline_moments}, $\sigma = \frac{\alpha_x}{\sqrt{3p}}$, we substitute:
\[
\frac{\alpha_x}{\sqrt{3p}} = \frac{\tau_x}{k_x^*}.
\]
Solving for $\alpha_x$, we obtain:
\[
\alpha_x = \frac{\sqrt{3p} \, \tau_x}{k_x^*}.
\]

\end{proof}

\section{Group Subspace Pursuit Algorithm} \label{appendixalg}
The Group Subspace Pursuit (GPSP) algorithm proposed in \cite{he2023group} is an iterative method designed to solve group-sparsity-constrained optimization problems by selecting the most significant feature groups. In each iteration, it updates the residuals and the set of selected indices to efficiently estimate the coefficients corresponding to the chosen features. Its convergence is analyzed in \cite{he2024groupprojectedsubspacepursuit}. The algorithm is summarized in Algorithm \ref{SPalgo}. For here, we use the tensor product between spatial and temporal test functions as the complete test functions. Therefore, the size of feature matrix becomes $\mathbf{F}\in\mathbb{R}^{S\times KM}$ and the size of the time derivatives becomes $\mathbf{b}\in\mathbb{R}^{S\times 1}$, where $S$ is the total number of test functions.

\begin{algorithm}[t!]
	\KwIn{$M$: number of basis function for varying coefficients.\\

		$\mathbf{F}\in\mathbb{R}^{S\times KM}$: the approximated feature matrix.\\
		$\mathbf{b}\in\mathbb{R}^{S}$: the approximated time derivatives. \\
		$\theta\in\mathbb{N}$, $\theta\geq 1$: the sparsity level.\\
	}
	
	\textbf{Initialization: } Denote $\mathbf{F}_{\{l\}}$ the column(s) of $\mathbf{F}$ corresponding to the $l^{th}$ feature, $l=1,...,K$.\\
	\For{$i=1:K$}{ $\mathbf{g}^{0,i}=\mathbf{F}_{\{i\}}\mathbf{F}_{\{i\}}^{\dagger}\mathbf{b}$.\\
		$\mathbf{g}^{0,i}=\mathbf{g}^{0,i}/\|\mathbf{g}^{0,i}\|_2.$
	}
	Set $\mathbf{G}^0=[\mathbf{g}^{0,1},\mathbf{g}^{0,2},...,\mathbf{g}^{0,K}]$.
	Let $j=0$, $\mathcal{I}^0=\{\theta$ indices corresponding to the largest magnitude entries in the vector $\mathbf{G}^*\mathbf{b}\}$, and $\mathbf{b}_{\text{res}}^{0}= \mathbf{b}-\mathbf{F}_{\{\mathcal{I}^0\}}\mathbf{F}_{\{\mathcal{I}^0\}}^\dagger\mathbf{b}$\;
	\textbf{Iteration: }\\
	\While{True}{
		\textbf{Step 1.}\\
		
		\For{$i=1:K$}{ $\mathbf{g}^{j+1,i}=\mathbf{F}_{\{i\}}\mathbf{F}_{\{i\}}^{\dagger}\mathbf{b}_{\text{res}}^j$.\\
			$\mathbf{g}^{j+1,i}=\mathbf{g}^{j+1,i}/\|\mathbf{g}^{j+1,i}\|_2.$
		}
		Set $\mathbf{G}^{j+1}=[\mathbf{g}^{j+1,1},\mathbf{g}^{j+1,2},...,\mathbf{g}^{j+1,K}]$. Construct $\widetilde{\mathcal{I}}^{j+1}=\mathcal{I}^{j}\cup\{\theta$ indices corresponding to the largest magnitude entries in the vector $(\mathbf{G}^{j+1})^*\mathbf{b}_{\text{res}}^{j}\}$.

		\textbf{Step 2.}\\
		$\mathbf{v}=\mathbf{0}\in\mathds{R}^K, \widetilde{\mathbf{c}}_{\{\mathcal{I}^{j+1}\}}=\mathbf{0}\in \mathds{R}^{K+(r-1)s}$. $\widetilde{\mathbf{c}}_{\{\mathcal{I}^{j+1}\}}=\mathbf{F}_{\{\mathcal{I}^{j+1}\}}^{\dagger}\mathbf{b}$.\\
		\For{$i$ in $\widetilde{\mathcal{I}}^{j+1}$}{ $\mathbf{v}^{j+1}_i=\|\mathbf{F}_{\{i\}}\widetilde{\mathbf{c}}_{\{i\}}\|_2$.\\
		}
		Let $\mathcal{I}^{j+1}=\{\theta$ indices corresponding to the largest elements of $\mathbf{v}\}$, and compute $\mathbf{b}_{\text{res}}^{j+1}=\mathbf{b}-\mathbf{F}_{\{\mathcal{I}^{j+1}\}}\mathbf{F}_{\{\mathcal{I}^{j+1}\}}^\dagger \mathbf{b}$\;
		
		\textbf{Step 3.} \\
		If $\|\mathbf{b}_{\text{res}}^{l+1}||_2>||\mathbf{b}_{\text{res}}^{l}||_2$, let $\widehat{\mathcal{I}} = \mathcal{I}^j$, output $\widehat{\mathbf{c}}$ such that $\widehat{\mathbf{c}}_{\{\mathcal{I}^{j}\}^{C}}=\mathbf{0}$ and $\widehat{\mathbf{c}}_{\{\mathcal{I}^j\}}=\mathbf{F}_{\{\mathcal{I}^j\}}^{\dagger}\mathbf{b}$;  and terminate the algorithm. Otherwise let $j\gets j+1$, and iterate.
	}
	
	\KwOut{A set of indices $\widehat{\mathcal{I}}$; $\widehat{\mathbf{c}}\in\mathbb{R}^{K\times 1}$ with $\|\widehat{\mathbf{c}}\|_0=\theta$ satisfying $\widehat{\mathbf{c}}_{\{\widehat{\mathcal{I}}\}}=\mathbf{F}_{\{\widehat{\mathcal{I}}\}}^\dagger \mathbf{b}$}
	\caption{{\bf Group Subspace Pursuit Algorithm($\text{GPSP}(\theta;M,\mathbf{F},\mathbf{b})$)} \label{SPalgo}}
\end{algorithm}

\section{Additional numerical studies} \label{appendix_comparisonmtd}
\subsection{More comparisons with the state-of-the-art methods}
We presents in Table \ref{tab:AdvectionKdVKS_eq} a detailed comparison of WG-IDENT (the proposed method) with GLASSO, SGTR, and rSGTR on the advection-diffusion equation, KdV equation, and KS equation, as a complement for the comparison in Section \ref{sec:comparison_study}. In this comparison, the dictionary consists of 16 features, incorporating partial derivatives of $u$, $u^2$ and $u^3$ up to the fourth order. The results demonstrate that our method consistently identifies the correct features across various noise levels, highlighting its robustness compared to other methods.

\begin{table}[t!]
\centering
{\centering \bfseries (a) Advection-Diffusion Equation \par}
\begin{tabular}{ccccc}
\toprule
Noise level & WG-IDENT & GLASSO & SGTR & rSGTR \\
\midrule
No Noise & $u_x, u_{xx}$ & $u_x, u_{xx}$ & $u_x, u_{xx}$ & $u_x, u_{xx}$ \\
0.1\%  & $u_x, u_{xx}$ & $u_x, u_{xx}$ & $u_x, u_{xx}$ & $u_x, u_{xx}$ \\
0.5\%  & $u_x, u_{xx}$ & $u_x, u_{xx}$ & $u_x, u_{xx}$ & $u_x, u_{xx}$ \\
1\%    & $u_x, u_{xx}$ & $u_x, u_{xx}$ & $u_x, u_{xx}$ & $u_x, u_{xx}$ \\
5\%    & $u_x, u_{xx}$ & $u_x, u_{xx}$ & $u_x, u_{xx}$ & $u_x, u_{xx}$  \\
8\%    & $u_x, u_{xx}$ & $\geq 4\ \text{terms}$ & $u_x, u_{xxxx}$ & $u_x, u_{xxxx}$  \\
10\%   & $u_x, u_{xx}$ & $\geq 4\ \text{terms}$ & $\geq 4\ \text{terms}$ & $\geq 4\ \text{terms}$  \\
\bottomrule
\end{tabular}
\vspace{0.5cm}

{\centering \bfseries (b) KdV Equation \par}
\begin{tabular}{ccccc}
\toprule
Noise level & WG-IDENT & GLASSO & SGTR & rSGTR \\
\midrule
No Noise & $uu_x, u_{xxx}$ & $3\ \text{terms}$ & $uu_x, u_{xxx}$ & $uu_x, u_{xxx}$ \\
0.1\%  & $uu_x, u_{xxx}$ & $3\ \text{terms}$ & $uu_x, u_{xxx}$ & $uu_x, u_{xxx}$ \\
0.5\%  & $uu_x, u_{xxx}$ & $3\ \text{terms}$ & $\geq 4\ \text{terms}$ & $\geq 4\ \text{terms}$ \\
1\%    & $uu_x, u_{xxx}$ & $3\ \text{terms}$ & $\geq 4\ \text{terms}$ & $\geq 4\ \text{terms}$ \\
5\%    & $uu_x, u_{xxx}$ & $3\ \text{terms}$ & $\geq 4\ \text{terms}$ & $\geq 4\ \text{terms}$  \\
8\%    & $uu_x, u_{xxx}$ & $3\ \text{terms}$ & $\geq 4\ \text{terms}$ & $\geq 4\ \text{terms}$  \\
10\%   & $uu_x, u_{xxx}$ & $4\ \text{terms}$ & $\geq 4\ \text{terms}$ & $\geq 4\ \text{terms}$  \\
\bottomrule
\end{tabular}
\vspace{0.5cm}

{\centering \bfseries (c) KS Equation \par}
\begin{tabular}{ccccc}
\toprule
Noise level & WG-IDENT & GLASSO & SGTR & rSGTR \\
\midrule
No Noise & $uu_x, u_{xx}, u_{xxxx}$ & $\geq 4\ \text{terms}$ & $uu_x, u_{xx}, u_{xxxx}$ & $uu_x, u_{xx}, u_{xxxx}$ \\
0.1\%  & $uu_x, u_{xx}, u_{xxxx}$ & $\geq 4\ \text{terms}$ & $\geq 4\ \text{terms}$ & $\geq 4\ \text{terms}$ \\
0.5\%  & $uu_x, u_{xx}, u_{xxxx}$ & $\geq 4\ \text{terms}$ & $\geq 4\ \text{terms}$ & $\geq 4\ \text{terms}$ \\
1\%    & $uu_x, u_{xx}, u_{xxxx}$ & $\geq 4\ \text{terms}$ & $\geq 4\ \text{terms}$ & $\geq 4\ \text{terms}$ \\
5\%    & $uu_x, u_{xx}, u_{xxxx}$ & $\geq 4\ \text{terms}$ & $\geq 4\ \text{terms}$ & $\geq 4\ \text{terms}$  \\
8\%    & $uu_x, u_{xx}, u_{xxxx}$ & $\geq 4\ \text{terms}$ & $\geq 4\ \text{terms}$ & $\geq 4\ \text{terms}$  \\
10\%   & $uu_x, u_{xx}, u_{xxxx}$ & $\geq 4\ \text{terms}$ & $\geq 4\ \text{terms}$ & $\geq 4\ \text{terms}$  \\
\bottomrule
\end{tabular}
\caption{Comparison of results for the rest of the PDEs: (a) advection-diffusion equation, (b) KdV equation, and (c) KS equation. The table presents the identified features by our method, GLASSO, SGTR, and rSGTR across various noise levels. Our method consistently identifies the correct features at all noise levels.}
\label{tab:AdvectionKdVKS_eq}
\end{table}

\subsection{Comparison of weak formulation in space-time domain and only in space}
In our formulation, the weak formulation is computed by integrating over the space-time domain, so that both spatial and temporal derivatives are transferred to test functions, see (\ref{eq_model_general_weak}). Another strategy is to only utilize the weak formulation in space, and use numerical differentiation to calculate time derivative with the the successively denoised differentiation (SDD) \cite{he2022robust} method. In this subsection, we compare these two strategies and demonstrate the advantages of using space-time weak formulation.

In the presence of noisy data, numerical differentiation can amplify noise, leading to a substantial amplification of errors in the time derivative. To address this issue in the second strategy, we adopt SDD, which computes the time derivative as
$$
\partial_t u_i^n \approx S_{(t)}D_tS_{(\textbf{x})}[U_i^n],
$$
where $S_{(t)},S_{(x)}$ are smoothing operators in time and space, and $D_t$ is a numerical differentiation scheme. By setting $S(t)$ as 
$$
S_{(t)}[U_\textbf{i}^n] = p_\textbf{i}^n(t^n), \text{ where } p_\textbf{i}^n = \argmin_{p\in P_2}\sum_{0\leq k\leq N}(p((t^k)-U_\textbf{i}^n))^2\exp\left (-\frac{||t^n-t^k||^2}{\gamma^2}\right ),
$$ 
where $P_2$ is the set of polynomials with a degree not exceeding 2, and $\gamma$ is a paramter controlling the weight. The operator $S_{(x)}$ can be defined similarly. For $D_t$, we employ a 4-point central difference scheme given by
$$
D_t U_i^n=\frac{-U_i^{n+2}+8U_i^{n+1}-8U_i^{n-1}+U_i^{n-2}}{12\Delta t}.
$$

We compare our method with the two strategies for identifying viscous Burgers' equation and KS equation. The $E_2$ values of the results are shown in Figure \ref{fig:SDD_phix_N_phixt}. For both PDE, the approach using SDD in the time domain and weak formulation in the spatial domain results in higher $E_2$ values. While using weak formulation in space-time domain gives constantly smaller $E_2$, demonstrating the benefits of this strategy.

\begin{figure}[t!]
    \centering
    \begin{tabular}{cc|cc}
    \hline
    \multicolumn{2}{c|}{viscous Burgers' equation} & \multicolumn{2}{c}{KS equation}\\
    \hline
    strategy 1 & strategy 2 & strategy 1 & strategy 2\\
    \includegraphics[width=0.2\linewidth]{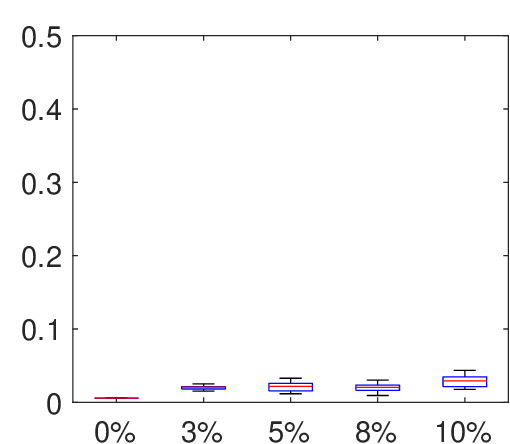}&
    \includegraphics[width=0.2\linewidth]{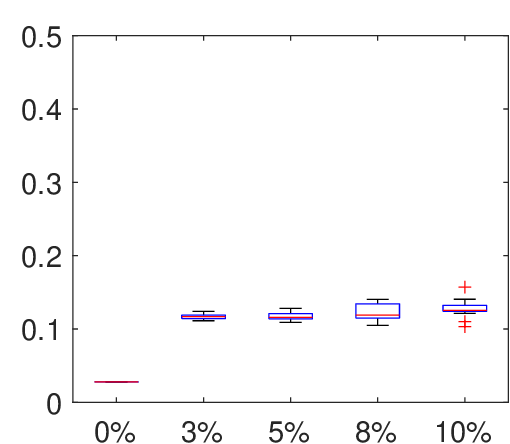}&
    \includegraphics[width=0.2\linewidth]{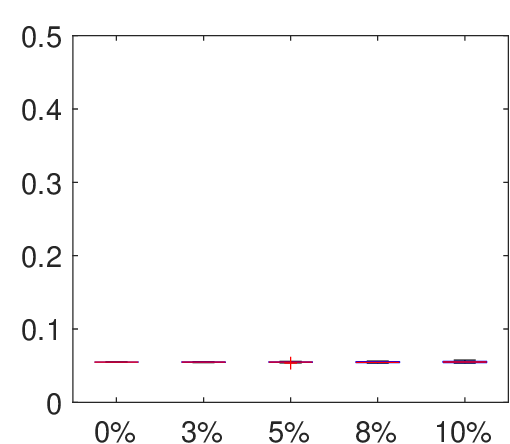}&
    \includegraphics[width=0.2\linewidth]{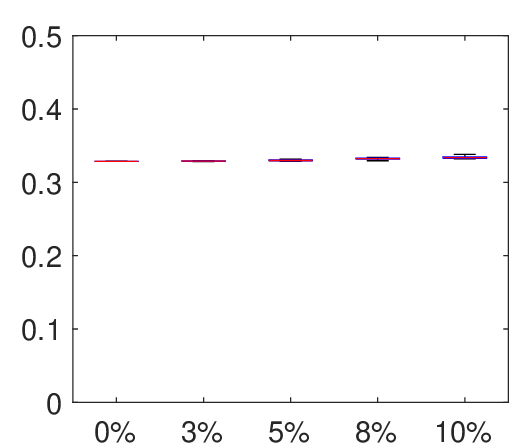}\\
    \hline 
    \end{tabular}
    \caption{Comparison of weak formulation in space-time domain 
    (strategy 1) and only in space (strategy 2). The plots show the $E_2$ value ($y$-axis) of different strategies in identifying viscous Burgers' equation and KS equation with various noise levels ($x$-axis). In strategy 1, weak formulation is used in space-time domain. In strategy 2, weak formulation is only used in space.}
    \label{fig:SDD_phix_N_phixt}
\end{figure}

\subsection{Sensitivity to the number of B-spline basis for approximating coefficients }\label{Appendix:numberofbasis}
In our proposed method, the varying coefficients are approximated via a linear combination of B-spline basis functions. We investigate how the number of basis functions affects identification accuracy. We apply this methodology to both the KS and KdV equations with varying numbers of basis functions. The true positive rate (TPR) for each experiment is reported in Figure \ref{fig:numb_basis_KS_NLS}. For the KS equation, when the number of basis function ranges from 2 to 46, the underlying PDE can be accurately identified by using 2-38 bases.

For KdV equation, our method using 3-12 basis functions successfully identifies the underlying PDE.
These results demonstrate that our method maintains robustness to the number of basis functions within a wide range.

\begin{figure}[t!]
    \centering
    \begin{tabular}{c c c}
    
    \hline
        \multicolumn{1}{c}{KS equation} & \multicolumn{1}{c}{KdV equation}\\
    \hline
        {\includegraphics[width=0.45\linewidth]{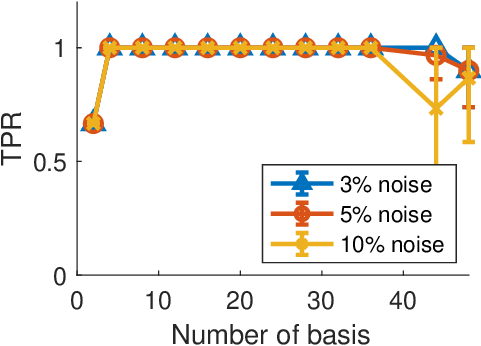}} & 
        {\includegraphics[width=0.45\linewidth]{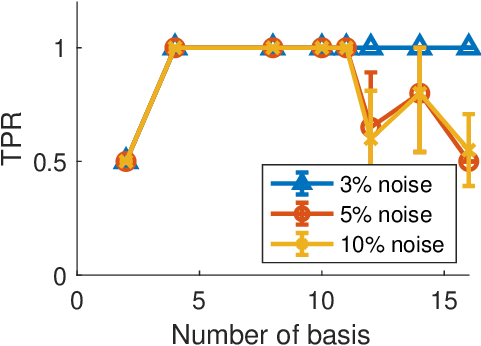}}\\
          (a)& (b) \\
    \hline 
    \end{tabular}
    \caption{With different numbers of bases functions for varying coefficients, identification accuracy of feature groups under varying noise levels (3\%, 5\%, 10\%) for Kuramoto-Sivashinsky (KS) and Korteweg-de Vries (KdV) equations, evaluated using True Positive Rate (TPR). The proposed method demonstrates robust performance across a wide range of basis function counts: (a) the KS equation consistently achieves 100\% TPR when using between 3 and 38 basis functions, and (b) the KdV equation consistently maintains 100\% TPR when using between 3 and 12 basis functions.}
\label{fig:numb_basis_KS_NLS}

\end{figure}

\subsection{Sensitivity of the Spatial Tuning Parameter $\tau_x$ and B-spline Order $p$ }
\label{app:tau_sensitivity}

 We investigate the effect of the spatial spectral parameter $\tau_x$,---which sets the spatial support of the B-spline test functions via the Gaussian–moment heuristic---affects identification performance for the advection–diffusion equation
\begin{equation}
  u_t \;=\; a(x)\,u_x \;+\; c\,u_{xx}, 
  \qquad a(x)=3\big(\sin(2\pi x)+3\big),\quad c=0.2,
\end{equation}
on $[0,2)\times[0,0.05]$ with periodic boundary conditions in space, the same setting as in Table~\ref{tab:differential_equations}. In our framework, the spatial test-function support $\alpha_x$ is chosen so that the Fourier cutoff $k_x^\ast$ lies $\tau_x$ standard deviations into the (approximately Gaussian) spectral tail of the B-spline kernel as shown in equation~\eqref{eq_alpha} (see Sec.~4.2).

For the following experiment, Fig.~\ref{fig:advectiondiff_tpr_taux}, the noise is added to the observations using the same noise-to-signal model as the main text at four levels $\sigma_{\mathrm{NSR}}\in\{0\%,3\%,5\%,10\%\}$. For each noise level and each $\tau_x \in \{3.0,3.3,3.5,3.7,4.0\}$, we perform 50 independent trials. Identification uses WG-IDENT with GPSP for candidate generation, GF-Trim for feature refinement, and RR for model selection. Performance is summarized by the \emph{group-level True Positive Rate (TPR)} for recovering the two ground-truth operators $\{u_x,\ u_{xx}\}$.

For each noise level, we report the distribution of TPR across 50 trials as a function of $\tau_x$. Varying $\tau_x$ adjusts the spatial support of the test functions (and thus the effective low-pass filtering). Smaller $\tau_x$ produces narrower supports that admit higher frequencies and can overfit high-frequency noise; larger $\tau_x$ increases support and suppresses noise more aggressively but risks attenuating informative mid-frequency content. This experiment quantifies the robustness window of $\tau_x$ under realistic noise levels while keeping all other design choices fixed. Across all tested values of $\tau_x \in \{3.0,3.3,3.5,3.7,4.0\}$ and noise levels (0\%, 3\%, 5\%, 10\%), the identification procedure consistently achieved a true positive rate (TPR) of 1. This consistent success indicates that the method is  robust to the choice of spectral support parameter $\tau_x$, even under nontrivial noise perturbations.

Similarly, we examine the effect of the spatial test-function order $p$ (B-spline order) on identification accuracy for the advection--diffusion equation with periodic boundaries. For each noise level $\{0\%, 3\%, 5\%, 10\%\}$, we vary $p \in \{4,5,6,7,8\}$. For every pair $(\text{noise},p)$, we perform 50 independent tests. The identification results are illustrated in Fig.~\ref{fig:advectiondiff_tpr_p}. Across all tested spline orders $p \in \{4,5,6,7,8\}$ and noise levels (0\%, 3\%, 5\%, 10\%), the identification procedure consistently achieved a true positive rate (TPR) of 1. This uniform recovery performance highlights the robustness of our weak formulation and model selection strategy, demonstrating its insensitivity to spline order choice even under substantial noise.

\begin{figure}[t!]
    \centering
    \begin{tabular}{cccc}
    \hline
        \textbf{0\%} & \textbf{3\%} & \textbf{5\%} & \textbf{10\%} \\
    \hline
    \multicolumn{4}{c}{Advection diffusion equation - TPR \& $\tau_x$}\\\hline
        \includegraphics[width=0.2\linewidth]{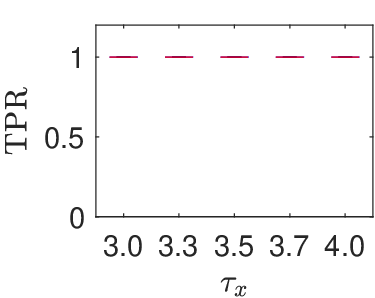} &
        \includegraphics[width=0.2\linewidth]{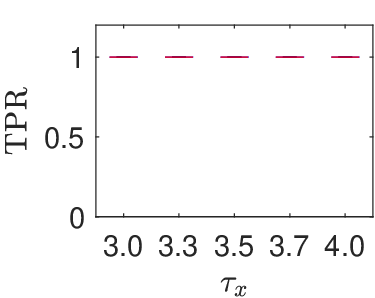}&
        \includegraphics[width=0.2\linewidth]{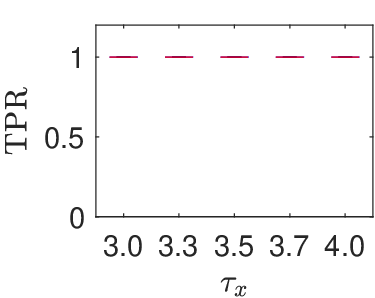}&
        \includegraphics[width=0.2\linewidth]{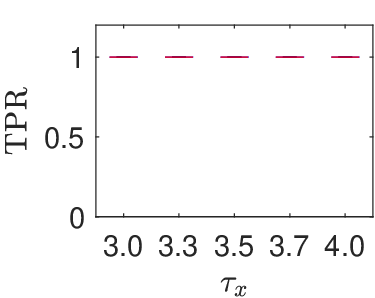}  \\

    \hline

    \end{tabular}
    \caption{True positive rate (TPR) of term identification across $\tau_x \in \{3.0,3.3,3.5,3.7,4.0\}$, shown as boxplots over 50 independent experiments per setting. Columns correspond to additive Gaussian noise levels of 0\%, 3\%, 5\%, and 10\%. In all cases, TPR = 1, confirming that our weak formulation and selection procedure recovers the correct PDE terms uniformly across spectral parameter choices and noise levels.}

    \label{fig:advectiondiff_tpr_taux}
\end{figure}

\begin{figure}[t!]
    \centering
    \begin{tabular}{cccc}
    \hline
        \textbf{0\%} & \textbf{3\%} & \textbf{5\%} & \textbf{10\%} \\
    \hline
    \multicolumn{4}{c}{Advection diffusion equation - TPR \& $p$}\\\hline
        \includegraphics[width=0.2\linewidth]{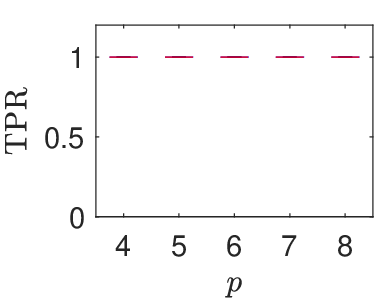} &
        \includegraphics[width=0.2\linewidth]{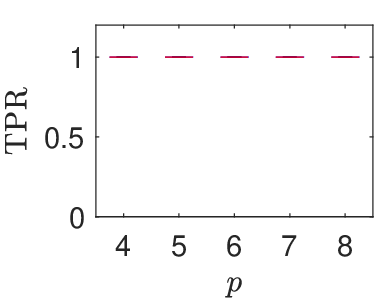}&
        \includegraphics[width=0.2\linewidth]{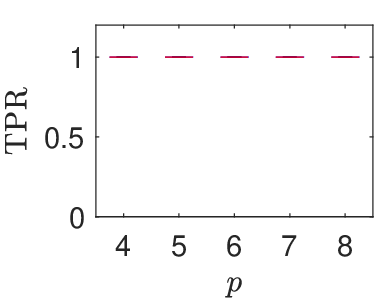}&
        \includegraphics[width=0.2\linewidth]{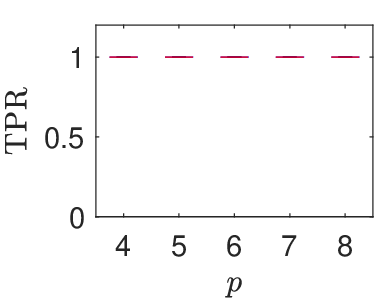}  \\

    \hline

    \end{tabular}
    \caption{True positive rate (TPR) of term identification across B-spline order $p \in \{4,5,6,7,8\}$, shown as boxplots over 50 independent experiments per setting. Columns correspond to additive Gaussian noise levels of 0\%, 3\%, 5\%, and 10\%. In all cases, TPR = 1, confirming that our weak formulation and selection procedure recovers the correct PDE terms uniformly across spline orders and noise levels.}

    \label{fig:advectiondiff_tpr_p}
\end{figure}

\subsection{Inviscid Burgers' equation with a discontinuity}\label{Appendixd4_inviscid}
    Here we provide further details on the additional experiment with the inviscid Burgers’ equation exhibiting shock formation. Specifically, we consider the one-dimensional inviscid Burgers’ equation with a spatially varying advective coefficient:
    \begin{equation}
        u_t = a(x)uu_x,
        \qquad x \in [0,2], \; t \in [0,0.1],
    \end{equation}
    subject to periodic boundary condition on space. 
    The varying coefficient is  chosen to go beyond the first-degree trigonometric functions used in earlier examples. In particular, we take 

    \begin{equation}
        a(x) = e^{\sin(\pi x)},
    \end{equation}
    which incorporates contributions from all harmonic orders in its Fourier expansion. 

    The initial condition is a smooth trigonometric combination with strong variations, which naturally generates shock formation during the temporal evolution:
    \begin{equation}
        u(x,0) = u_0(x) = \Big[\,0.2 + 1.2\sin(2\pi x) + 0.7\sin(6\pi x) + 0.35\sin(12\pi x)\,\Big] - \overline{u_0},
    \end{equation}
    where the spatial mean $\overline{u_0} = \frac{1}{|\Omega|} \int_{\Omega} u_0(x)\,dx$.

    For the numerical discretization, the spatial domain $[0,2]$ is partitioned into 
    \(
    N = 400 \quad \text{grid points}, \quad \Delta x = \tfrac{2}{N},
    \)
    and the temporal domain $[0,0.1]$ is discretized with 
    \(
    \Delta t = 10^{-4},  T = 0.1, \text{so that } N_t = 1000 \text{ steps}.
    \)

\begin{figure}[t!]
    \centering
    \begin{tabular}{cccc}
    \hline
        \textbf{E\textsubscript{2}} & \textbf{E\textsubscript{res}} & \textbf{TPR} & \textbf{PPV} \\
    \hline
    \multicolumn{4}{c}{Inviscid Burgers' equation}\\\hline
        \includegraphics[width=0.2\linewidth]{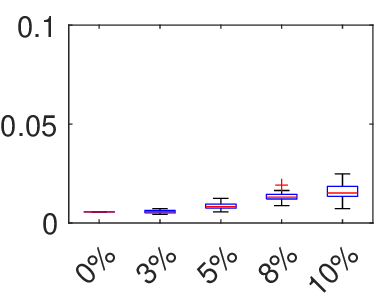} &
        \includegraphics[width=0.2\linewidth]{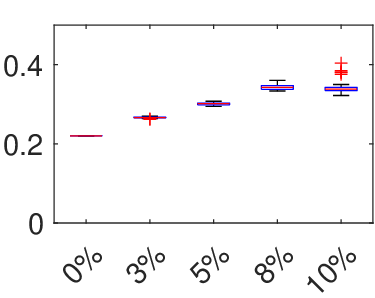}&
        \includegraphics[width=0.2\linewidth]{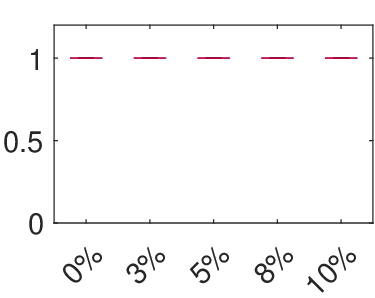}&
        \includegraphics[width=0.2\linewidth]{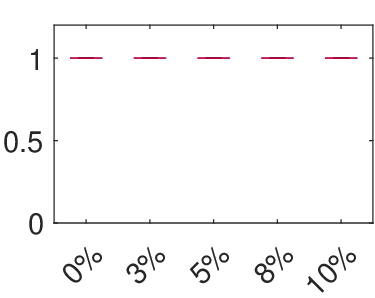} \\
    \hline
    \end{tabular}
    \caption{$E_2$, $E_{\text{res}}$, TPR, PPV of inviscid Burgers' equation with different noise levels, $0\%$, $3\%$, $5\%$, $8\%$, $10\%$. Each experiment is repeated for 50 times.}
    \label{fig:inviscid_Burgers_equation_with_a_discontinuity}
    \end{figure}

\begin{figure}[t!]
    \centering
    \begin{tabular}{cccc}
    \hline
    \multicolumn{4}{c}{Inviscid Burgers' equation}\\\hline
        $\sigma_{\text{NSR}}=0\%$ & 3\% & 5\% & 10\% \\
    \hline
    \multicolumn{4}{c}{Given data}\\
    {\includegraphics[width=0.18\linewidth]{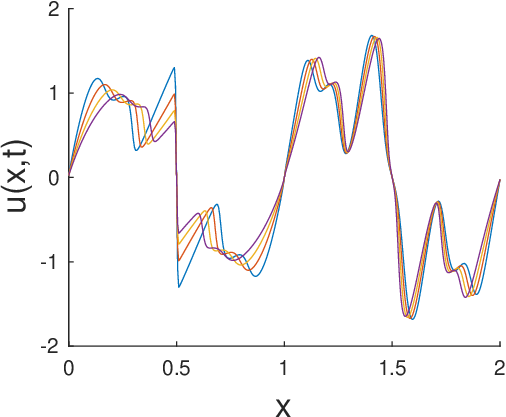}} &
        {\includegraphics[width=0.18\linewidth]{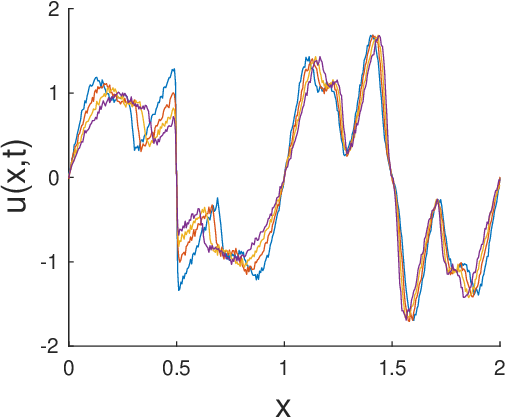}} &
        {\includegraphics[width=0.18\linewidth]{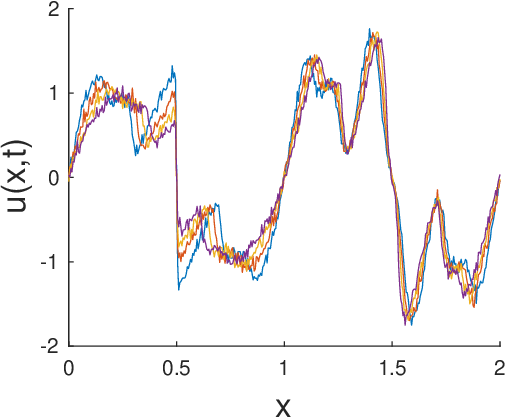}}&
        {\includegraphics[width=0.18\linewidth]{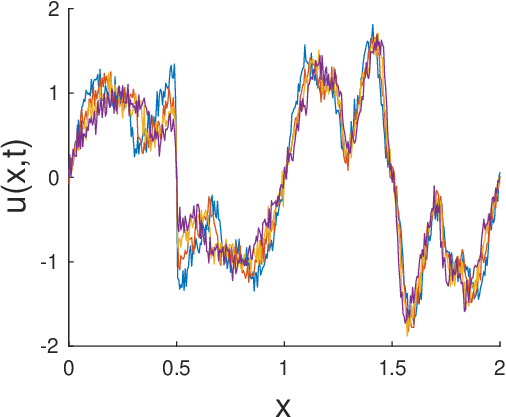}}\\
        \hline
        \multicolumn{4}{c}{Reconstructed varying coefficient (blue = true, red = identified)}\\
        {\includegraphics[width=0.18\linewidth]{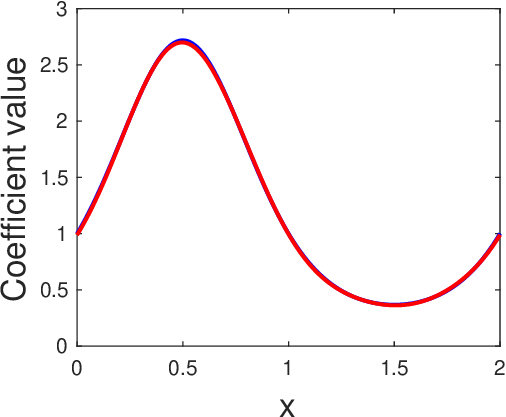}} &
        {\includegraphics[width=0.18\linewidth]{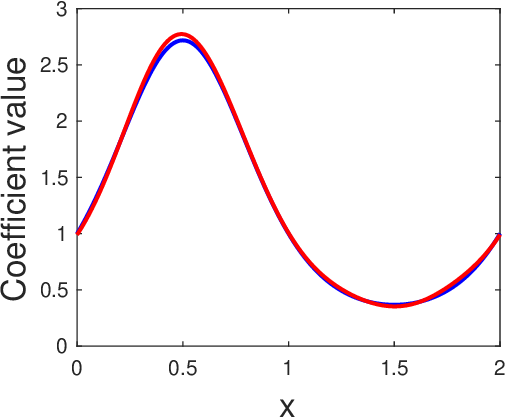}} &
        {\includegraphics[width=0.18\linewidth]{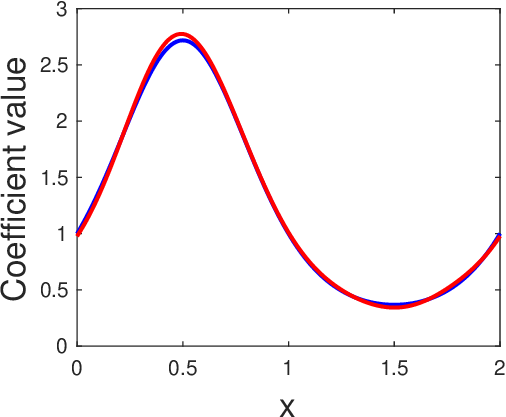}}&
        {\includegraphics[width=0.18\linewidth]{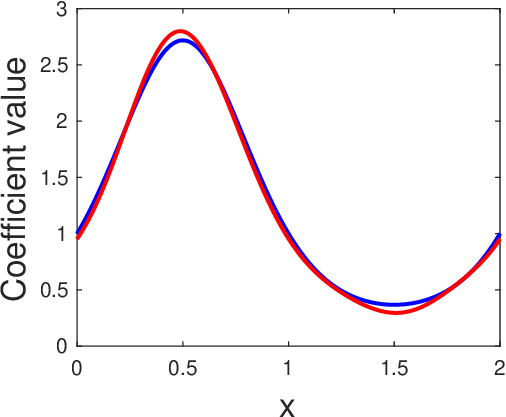}}\\
    \hline
       \multicolumn{4}{c}{Simulated solution from the identified PDE}\\
        {\includegraphics[width=0.18\linewidth]{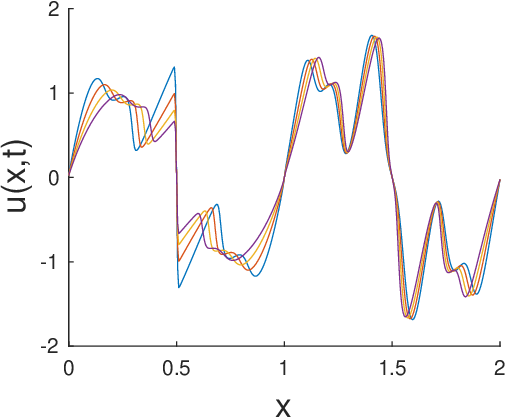}} &
        {\includegraphics[width=0.18\linewidth]{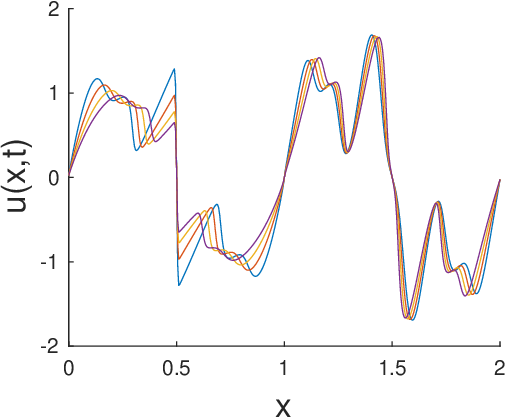}} &
        {\includegraphics[width=0.18\linewidth]{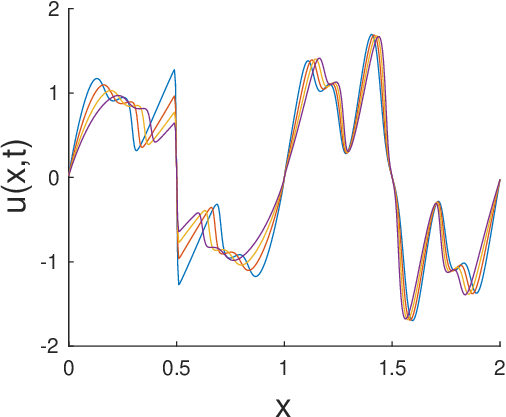}}&
        {\includegraphics[width=0.18\linewidth]{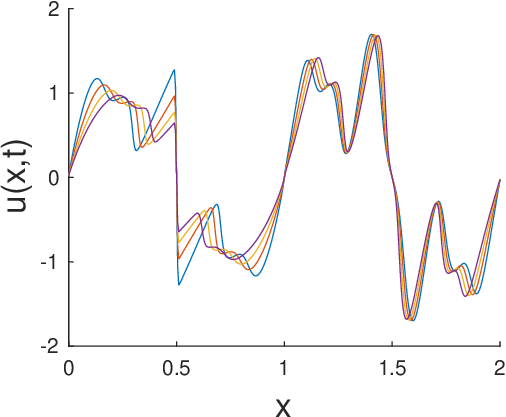}}\\
    \hline
    \multicolumn{4}{c}{Spatial profile at $t=0.06$ (blue = true, red = identified)}\\
    {\includegraphics[width=0.18\linewidth]{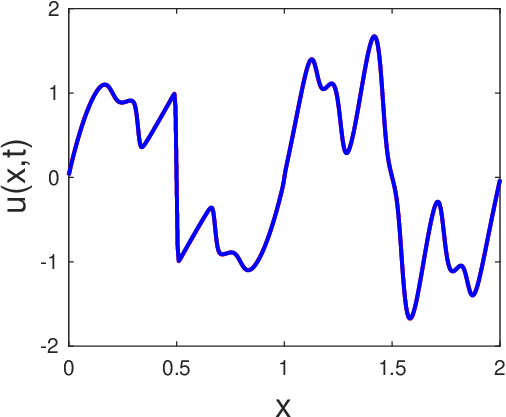}} &
    {\includegraphics[width=0.18\linewidth]{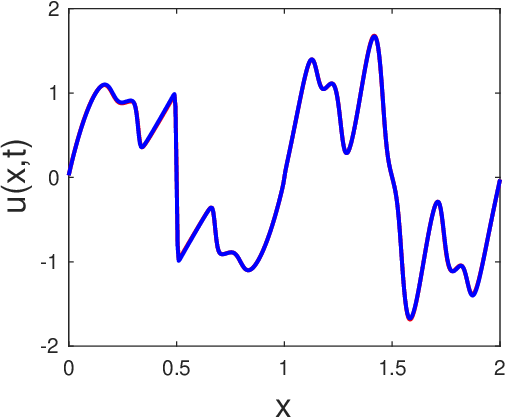}} &
    {\includegraphics[width=0.18\linewidth]{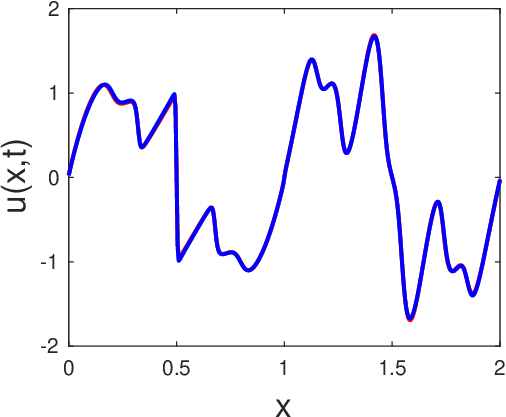}}&
    {\includegraphics[width=0.18\linewidth]{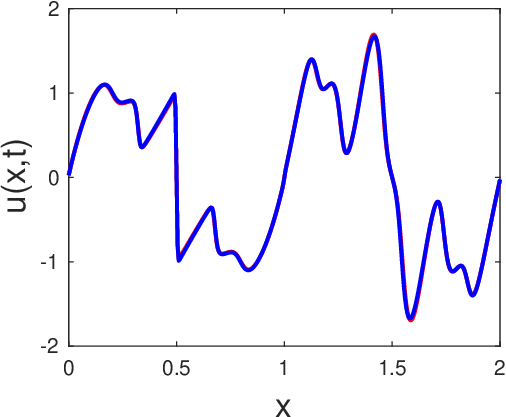}}\\
    \hline
    \multicolumn{4}{c}{Absolute error}\\
        {\includegraphics[width=0.18\linewidth]{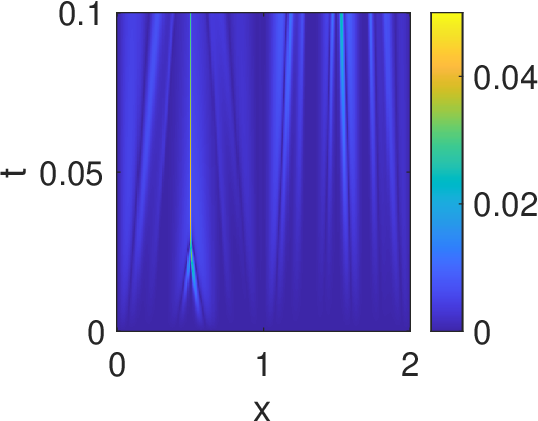}} &
        {\includegraphics[width=0.18\linewidth]{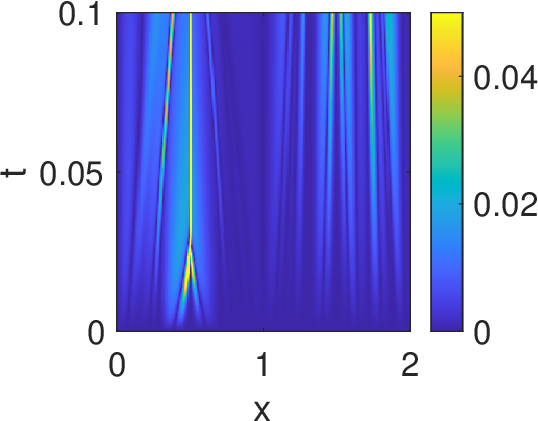}} &
        {\includegraphics[width=0.18\linewidth]{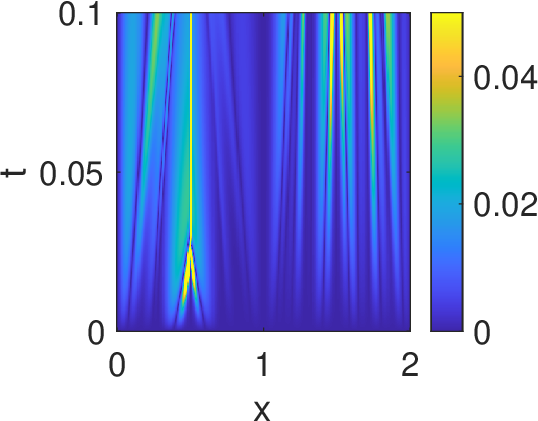}}&
        {\includegraphics[width=0.18\linewidth]{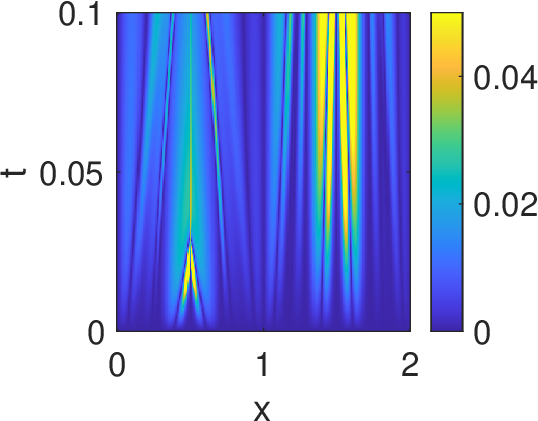}}\\
    \hline
    \end{tabular}
    \caption{Inviscid Burgers' equation under increasing observation noise.
Columns correspond to $\sigma_{\text{NSR}}=0\%, 3\%, 5\%, 10\%$.
Row~1 shows the given (noisy) observations as spatial trajectories at time frames $t\in\{0.04, 0.06, 0.08, 0.1\}$; 
Row~2 shows the reconstructed varying coefficient overlaying the true (blue) and identified (red) trajectories;
Row~3 shows trajectories at the same time frames simulated from the PDE identified by WG-IDENT; 
Row~4 shows a 1D spatial profile at $t=0.06$ overlaying the true (blue) and identified (red) trajectories; 
Row~5 shows the point-wise absolute errors between the simulated and true solutions across the computational domain, $x\in[0,2]$, $t\in[0,0.1]$.}
    \label{fig:2d3d_Inviscidburgerdiff}
\end{figure}

We then evaluate the identification performance of WG-IDENT on this more challenging setting using four quantitative metrics: the relative error \( E_2 \), the residual error \( E_{\text{res}} \), the True Positive Rate (TPR), and the Positive Predictive Value (PPV). Each experiment is repeated 50 times under varying noise levels \( \sigma_{\text{NSR}} \in \{1\%, 3\%, 5\%, 8\%, 10\%\} \). Figures~\ref{fig:inviscid_Burgers_equation_with_a_discontinuity} presents box plots illustrating the distributions of these metrics. For inviscid Burgers' equation, \( E_2 \) and \( E_{\text{res}} \) remain consistently low across noise levels, while TPR and PPV are close to unity. This demonstrates that WG-IDENT can robustly recover the true governing structure even in the presence of shocks, noise, and a coefficient function that is considerably more general than first-degree trigonometric. In particular, Figure~\ref{fig:2d3d_Inviscidburgerdiff} further illustrates that the reconstructed varying coefficients, the identified PDE, and the corresponding solutions remain highly consistent with the underlying truth across all tested noise levels, highlighting the robustness of WG-IDENT in accurately capturing both governing dynamics and spatially varying coefficients under noisy observations.

\subsection{Robustness tests under higher noise levels} \label{appendixd5:highernoiselevel}
To further examine the robustness of WG-IDENT, we conducted additional experiments on two representative PDEs: the viscous Burgers’ equation and the advection--diffusion equation. In these tests, we extended the noise-to-signal ratio (NSR) beyond the levels reported in the main text in order to probe the stability limits of the method.  

For the viscous Burgers’ equation, we increased the NSR up to $12\%$. Figure~\ref{fig:robustness_TPR_PPV} reports the True Positive Rate (TPR) and Positive Predictive Value (PPV) for different noise levels ($0\%$, $3\%$, $5\%$, $8\%$, $10\%$, and $12\%$). The results show that WG-IDENT achieves perfect identification (TPR and PPV close to one) for NSR up to $10\%$, consistently recovering the correct set of terms without spurious selections. When the noise level is further increased to $12\%$, however, the algorithm becomes less stable, occasionally leading to missed or redundant terms.  

For the advection--diffusion equation, we pushed the NSR much higher, up to $40\%$. Specifically, we evaluated TPR and PPV at noise levels $0\%$, $5\%$, $10\%$, $20\%$, $30\%$, and $40\%$. The results confirm that WG-IDENT remains remarkably robust: even with $20\%$ additive noise, both TPR and PPV remain equal to 1 across 50 repeated trials. Performance begins to degrade only under extreme noise contamination ($30\%$--$40\%$ NSR), where occasional missed or spurious terms appear.  

Overall, these findings demonstrate that WG-IDENT is highly reliable under practically relevant noise levels, consistently achieving perfect recovery up to $10\%$ NSR across both equations, and even tolerating significantly higher noise (up to $20\%$) for the advection--diffusion equation before noticeable degradation occurs.

\begin{figure}[t!]
    \centering
    \begin{tabular}{cc}
        \hline
        \multicolumn{2}{c}{Viscous Burgers' equation} \\
        \hline
        \hspace{1.0cm} TPR  & PPV \\
        \includegraphics[width=0.35\linewidth]{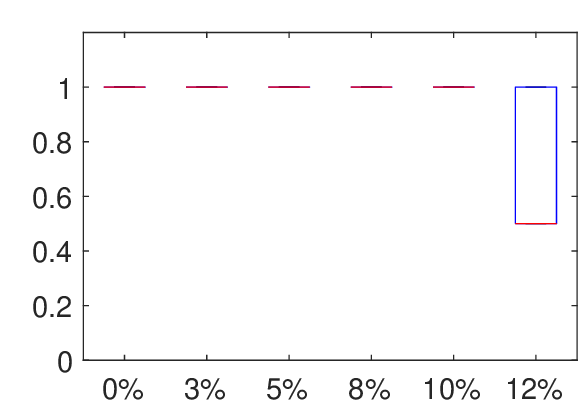} &
        \includegraphics[width=0.35\linewidth]{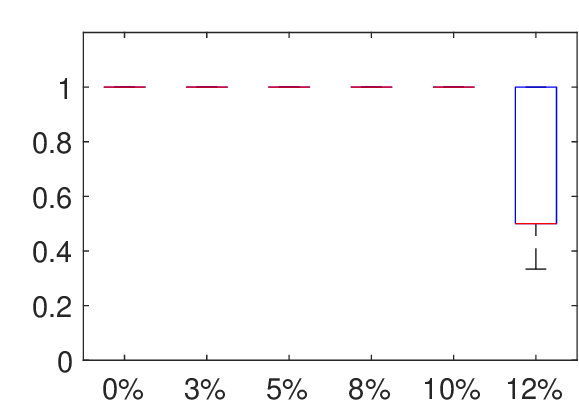} \\
        \hline
        \multicolumn{2}{c}{Advection diffusion equation} \\
        \hline
        \hspace{1.0cm} TPR  & PPV \\
        \includegraphics[width=0.35\linewidth]{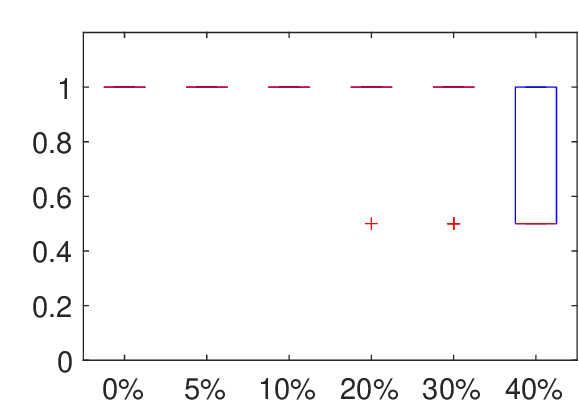} &
        \includegraphics[width=0.35\linewidth]{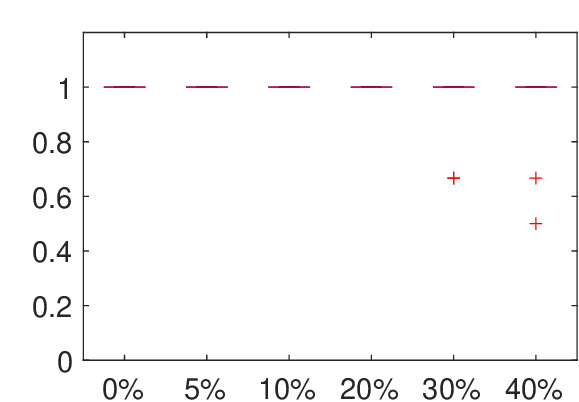}

        \\

        \hline
    \end{tabular}

    \caption{Robustness tests of WG-IDENT under higher noise levels. 
    (First row) Viscous Burgers’ equation: TPR and PPV are reported for NSR levels 
    $0\%$, $3\%$, $5\%$, $8\%$, $10\%$, and $12\%$. WG-IDENT consistently achieves perfect identification up to $10\%$ NSR, with instability observed only at $12\%$. 
    (Second row) Advection--diffusion equation: TPR and PPV are reported for NSR levels 
    $0\%$, $5\%$, $10\%$, $20\%$, $30\%$, and $40\%$. WG-IDENT maintains perfect recovery up to $20\%$ NSR, while performance degrades only at extreme noise levels ($30$--$40\%$).}
    
    \label{fig:robustness_TPR_PPV}
\end{figure}

\subsection{Additional coefficient recovery experiments} \label{appendixd7:coeffrecovery}
We include here more coefficient recovery experiments for additional validations. 
Figures~\ref{fig:coef2_reconstruction_advectiondiff}–\ref{fig:coef2_reconstruction_Sch_vt} present the reconstructed coefficients for the advection–diffusion equation, the viscous Burgers’ equation, and the Schr\"{o}dinger equation (both real and imaginary components) under noise levels of $3\%$, $5\%$, and $10\%$. 
In all cases, the estimated coefficients closely match the ground-truth values, which further demonstrates the robustness and effectiveness of WG-IDENT in recovering varying coefficients across a diverse set of PDEs.

\begin{figure}[t!]
    \centering
    \begin{tabular}{cccc}
    \hline
        \multicolumn{4}{c}{Advection diffusion equation}\\\hline
             &  $\sigma_{\text{NSR}}=3\%$ & $5\%$ & $10\%$ \\
    \hline
        $u_x$ & 
        \raisebox{-0.5\height}{\includegraphics[width=0.25\linewidth]{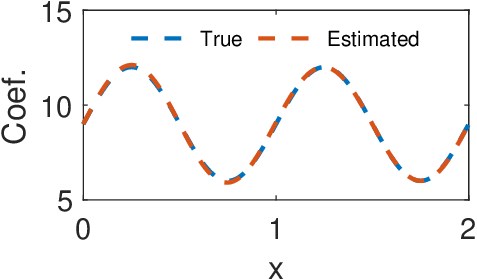}} &
        \raisebox{-0.5\height}{\includegraphics[width=0.25\linewidth]{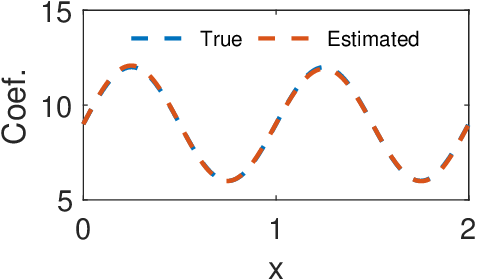}}&
        \raisebox{-0.5\height}{\includegraphics[width=0.25\linewidth]{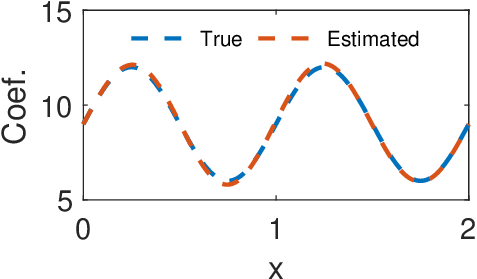}}\\
    \hline 
        $u_{xx}$ &
        \raisebox{-0.5\height}{\includegraphics[width=0.25\linewidth]{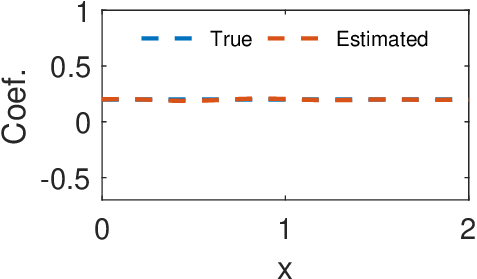}} &
        \raisebox{-0.5\height}{\includegraphics[width=0.25\linewidth]{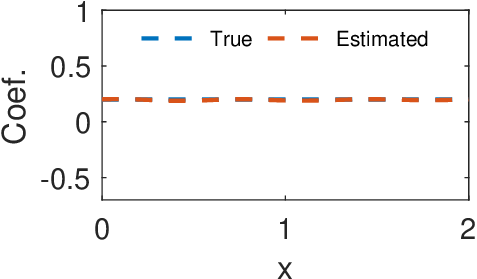}}&
        \raisebox{-0.5\height}{\includegraphics[width=0.25\linewidth]{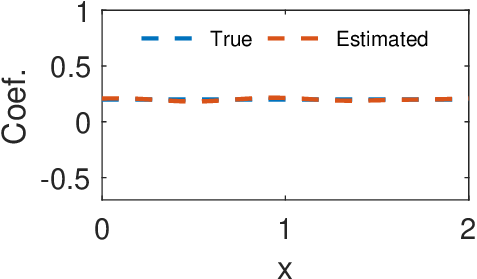}}\\
    \hline
    \end{tabular}
    \caption{Estimated coefficients of different terms in Advection diffusion equation with $3\%$, $5\%$ and $10\%$ noise. The first row shows the estimated coefficients of $u_x$. The second row shows the estimated coefficients of $u_{xx}$. }
    \label{fig:coef2_reconstruction_advectiondiff}
\end{figure}

\begin{figure}[t!]
    \centering
    \begin{tabular}{cccc}
    \hline
        \multicolumn{4}{c}{Viscous Burgers' equation}\\\hline
             &  $\sigma_{\text{NSR}}=3\%$ & $5\%$ & $10\%$ \\
    \hline
        $uu_x$ & 
        \raisebox{-0.5\height}{\includegraphics[width=0.25\linewidth]{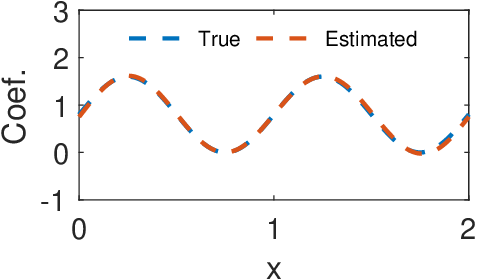}} &
        \raisebox{-0.5\height}{\includegraphics[width=0.25\linewidth]{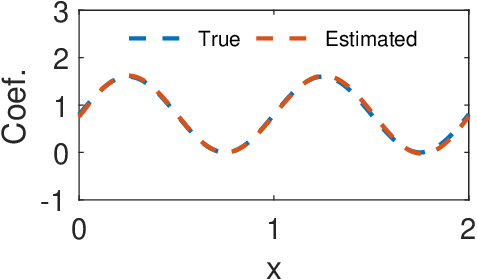}}&
        \raisebox{-0.5\height}{\includegraphics[width=0.25\linewidth]{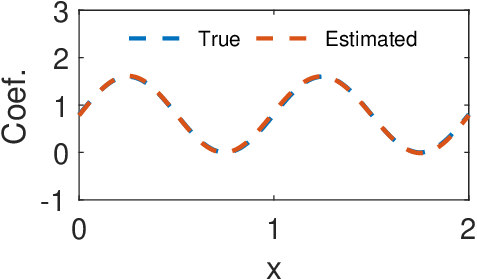}}\\
    \hline 
        $u_{xx}$ &
        \raisebox{-0.5\height}{\includegraphics[width=0.25\linewidth]{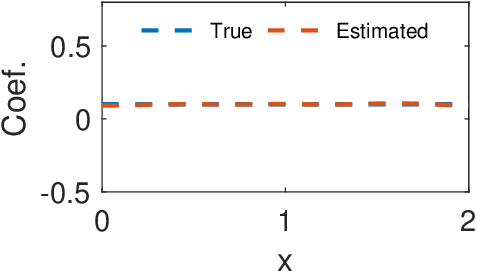}} &
        \raisebox{-0.5\height}{\includegraphics[width=0.25\linewidth]{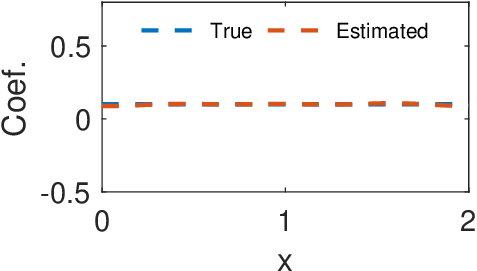}}&
        \raisebox{-0.5\height}{\includegraphics[width=0.25\linewidth]{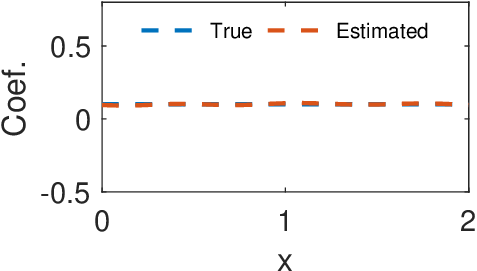}}\\
    \hline
    \end{tabular}
    \caption{Estimated coefficients of different terms in Viscous Burgers' equation with $3\%$, $5\%$ and $10\%$ noise. The first row shows the estimated coefficients of $uu_x$. The second row shows the estimated coefficients of $u_{xx}$. }
    \label{fig:coef2_reconstruction_visburgers}
\end{figure}

\begin{figure}[t!]
    \centering
    \begin{tabular}{cccc}
    \hline
    \multicolumn{4}{c}{Schr\"{o}dinger equation real part}\\\hline
         &  $\sigma_{\text{NSR}}=3\%$ & $5\%$ & $10\%$ \\
    \hline
        $w$ & 
        \raisebox{-0.5\height}{\includegraphics[width=0.25\linewidth]{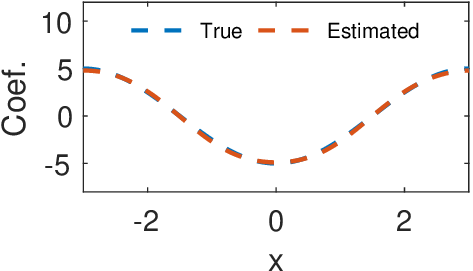}} &
        \raisebox{-0.5\height}{\includegraphics[width=0.25\linewidth]{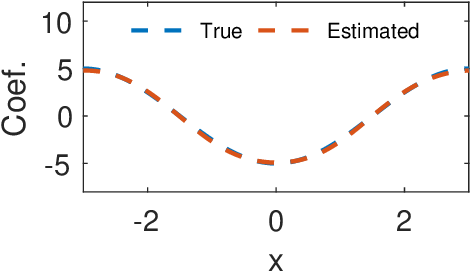}}&
        \raisebox{-0.5\height}{\includegraphics[width=0.25\linewidth]{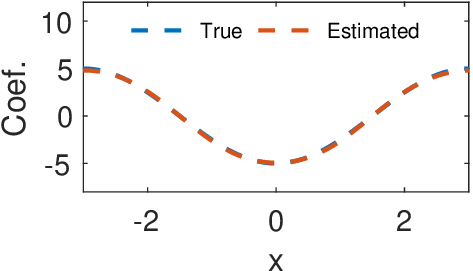}}\\
    \hline 
        $w_{xx}$ &
        \raisebox{-0.5\height}{\includegraphics[width=0.25\linewidth]{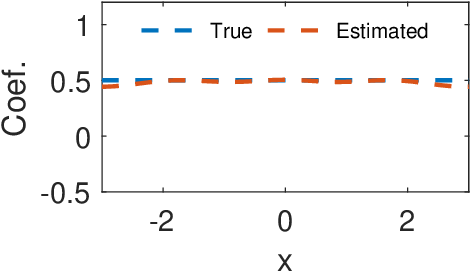}} &
        \raisebox{-0.5\height}{\includegraphics[width=0.25\linewidth]{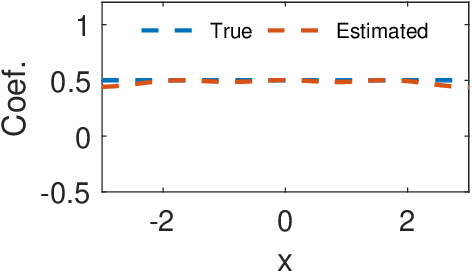}}&
        \raisebox{-0.5\height}{\includegraphics[width=0.25\linewidth]{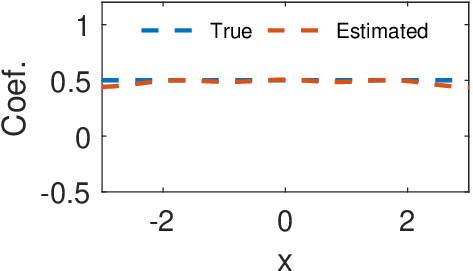}}\\
    \hline
    \end{tabular}
    \caption{Estimated coefficients of the real components of different terms in Sch equation with $3\%$, $5\%$ and $10\%$ noise. The first row shows the estimated coefficients of $w$. The second row shows the estimated coefficients of $w_{xx}$. }
    \label{fig:coef2_reconstruction_Sch_Ut}
\end{figure}

\begin{figure}[t!]
    \centering
    \begin{tabular}{cccc}
    \hline
    \multicolumn{4}{c}{Schr\"{o}dinger equation imaginary part}\\\hline
         &  $\sigma_{\text{NSR}}=3\%$ & $5\%$ & $10\%$ \\
    \hline
        $v$ & 
        \raisebox{-0.5\height}{\includegraphics[width=0.25\linewidth]{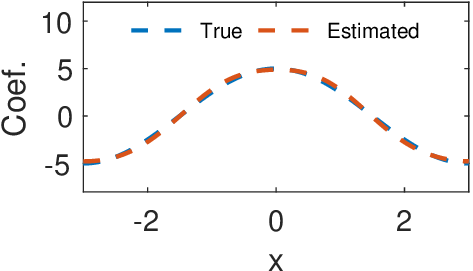}} &
        \raisebox{-0.5\height}{\includegraphics[width=0.25\linewidth]{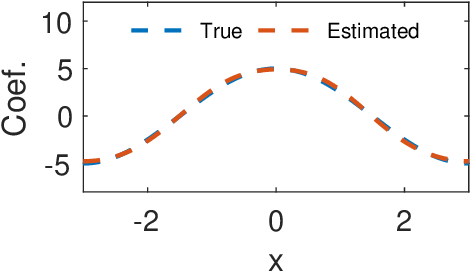}}&
        \raisebox{-0.5\height}{\includegraphics[width=0.25\linewidth]{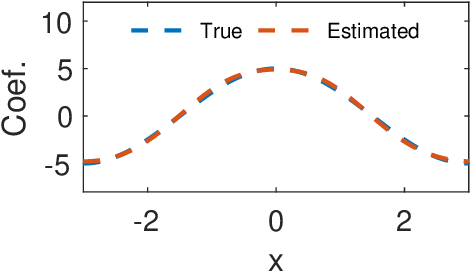}}\\
    \hline 
        $v_{xx}$ &
        \raisebox{-0.5\height}{\includegraphics[width=0.25\linewidth]{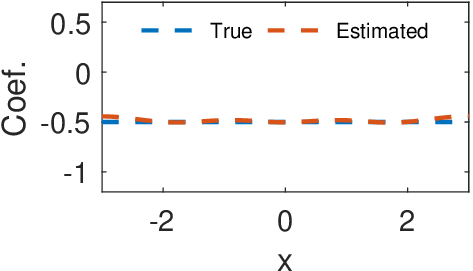}} &
        \raisebox{-0.5\height}{\includegraphics[width=0.25\linewidth]{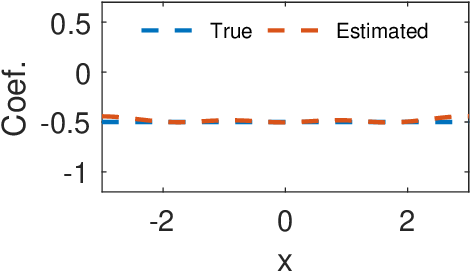}}&
        \raisebox{-0.5\height}{\includegraphics[width=0.25\linewidth]{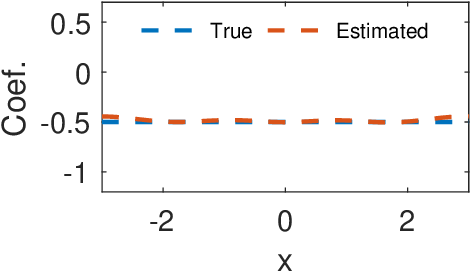}}\\
    \hline
    \end{tabular}
    \caption{Estimated coefficients of the imaginary components of different terms in Sch equation with $3\%$, $5\%$ and $10\%$ noise. The first row shows the estimated coefficients of $v$. The second row shows the estimated coefficients of $v_{xx}$. }
    \label{fig:coef2_reconstruction_Sch_vt}
\end{figure}

\subsection{Example of group feature selection across sparsity levels}\label{appendixd6:gftrim}

To provide further clarification on the behavior of WG-IDENT across different sparsity levels, we include here a representative example using the Kuramoto--Sivashinsky (KS) equation. In classical approaches such as Group Lasso, the selected feature sets are nested, meaning that the set identified at sparsity level $\theta+1$ always contains all groups from $\theta$. In contrast, our framework based on Group Projected Subspace Pursuit (GPSP) combined with Group Feature Trimming (GF-Trim) does not impose such monotonicity. As a result, the active feature groups identified at different $\theta$ may not form a strictly additive sequence.

Table~\ref{tab:KS_sparsity_example} presents the identified groups for the KS equation at sparsity levels $\theta=1$ through $\theta=6$, both before and after GF-Trim. The correct active indices for this equation are $[4,6,8]$. We observe that, while the candidate sets at certain $\theta$ include spurious features (e.g., indices [10,13,15] at $\theta=4$), these are consistently removed after GF-Trim, leading to recovery of the correct feature set once $\theta \geq 3$.

\begin{table}[t!]
\footnotesize
\centering
{\centering \bfseries KS equation \par}
\begin{tabular}{cll}
\toprule
Sparsity level & Before GF-Trim & After GF-Trim \\
\midrule
$\theta = 1$ & [8]              & [8] \\
$\theta = 2$ & [6,8]            & [6,8] \\
$\theta = 3$ & [4,6,8]          & [4,6,8] \\
$\theta = 4$ & [8,10,13,15]     & [8,10,15] \\
$\theta = 5$ & [4,6,8,9,10]     & [4,6,8] \\
$\theta = 6$ & [4,6,8,9,13,14]  & [4,6,8] \\
\bottomrule
\end{tabular}
\caption{Example of feature groups identified for the KS equation at different sparsity levels. The true indices are [4,6,8].}
\label{tab:KS_sparsity_example}
\end{table}

This example highlights two important aspects:  
(i) the identified groups across $\theta$ are not necessarily nested, unlike the behavior in group LASSO; and  
(ii) the GF-Trim procedure plays a crucial role in eliminating low-contribution groups, thereby stabilizing the recovery process. Together, these observations emphasize the importance of the refinement stage in WG-IDENT and clarify the interpretation of feature selection across different sparsity levels.

\subsection{Comparison of weak formulation in space-time domain and only in space} \label{appendix_wfinspace}
In our formulation, the weak formulation is computed by integrating over the space-time domain, so that both spatial and temporal derivatives are transferred to test functions, see (\ref{eq_model_general_weak}). Another strategy is to only utilize the weak formulation in space, and use numerical differentiation to calculate time derivative with the the successively denoised differentiation (SDD) \cite{he2022robust} method. In this subsection, we compare these two strategies and demonstrate the advantages of using space-time weak formulation.

In the presence of noisy data, numerical differentiation can amplify noise, leading to a substantial amplification of errors in the time derivative. To address this issue in the second strategy, we adopt SDD, which computes the time derivative as
$$
\partial_t u_i^n \approx S_{(t)}D_tS_{(\textbf{x})}[U_i^n],
$$
where $S_{(t)},S_{(x)}$ are smoothing operators in time and space, and $D_t$ is a numerical differentiation scheme. By setting $S(t)$ as 
$$
S_{(t)}[U_\textbf{i}^n] = p_\textbf{i}^n(t^n), \text{ where } p_\textbf{i}^n = \argmin_{p\in P_2}\sum_{0\leq k\leq N}(p((t^k)-U_\textbf{i}^n))^2\exp\left (-\frac{||t^n-t^k||^2}{\gamma^2}\right ),
$$ 
where $P_2$ is the set of polynomials with a degree not exceeding 2, and $\gamma$ is a paramter controlling the weight. The operator $S_{(x)}$ can be defined similarly. For $D_t$, we employ a 4-point central difference scheme given by
$$
D_t U_i^n=\frac{-U_i^{n+2}+8U_i^{n+1}-8U_i^{n-1}+U_i^{n-2}}{12\Delta t}.
$$

We compare our method with the two strategies for identifying viscous Burgers' equation and KS equation. The $E_2$ values of the results are shown in Figure \ref{fig:SDD_phix_N_phixt}. For both PDE, the approach using SDD in the time domain and weak formulation in the spatial domain results in higher $E_2$ values. While using weak formulation in space-time domain gives constantly smaller $E_2$, demonstrating the benefits of this strategy.

\begin{figure}[t!]
{\footnotesize
    \centering
    \begin{tabular}{cc|cc}
    \hline
    \multicolumn{2}{c|}{Viscous Burgers' equation} & \multicolumn{2}{c}{KS equation}\\
    \hline
    strategy 1 & strategy 2 & strategy 1 & strategy 2\\
    \includegraphics[width=0.22\linewidth]{JCP_version/Burgers_all_noise_phixt_E2.eps}&
    \includegraphics[width=0.22\linewidth]{JCP_version/Burgers_all_noise_SDD_Phix_E2.eps}&
    \includegraphics[width=0.22\linewidth]{JCP_version/KS_all_noise_phixt_E2.eps}&
    \includegraphics[width=0.22\linewidth]{JCP_version/KS_all_noise_SDD_Phix_E2.eps}\\
    \hline 
    \end{tabular}
    }
    \caption{Comparison of weak formulation in space-time domain 
    (strategy 1) and only in space (strategy 2). The plots show the $E_2$ value ($y$-axis) of different strategies in identifying viscous Burgers' equation and KS equation with various noise levels ($x$-axis). In strategy 1, weak formulation is used in space-time domain. In strategy 2, weak formulation is only used in space.
    }
    \label{fig:SDD_phix_N_phixt}
\end{figure}

\subsection{More comparisons with the state-of-the-art methods}\label{appendix_comparemethods}
We presents in Table \ref{tab:AdvectionKdVKS_eq} a detailed comparison of WG-IDENT (the proposed method) with GLASSO, SGTR, and rSGTR on the advection-diffusion equation, KdV equation, and KS equation, as a complement for the comparison in Section \ref{sec:comparison_study}. In this comparison, the dictionary consists of 16 features, incorporating partial derivatives of $u$, $u^2$ and $u^3$ up to the fourth order. The results demonstrate that our method consistently identifies the correct features across various noise levels, highlighting its robustness compared to other methods.

\begin{table}[t!]
\centering
\footnotesize
{\centering \bfseries (a) Advection-Diffusion Equation \par}
\begin{tabular}{ccccc}
\toprule
Noise level & WG-IDENT & GLASSO & SGTR & rSGTR \\
\midrule
No Noise & $u_x, u_{xx}$ & $u_x, u_{xx}$ & $u_x, u_{xx}$ & $u_x, u_{xx}$ \\
0.1\%  & $u_x, u_{xx}$ & $u_x, u_{xx}$ & $u_x, u_{xx}$ & $u_x, u_{xx}$ \\
0.5\%  & $u_x, u_{xx}$ & $u_x, u_{xx}$ & $u_x, u_{xx}$ & $u_x, u_{xx}$ \\
1\%    & $u_x, u_{xx}$ & $u_x, u_{xx}$ & $u_x, u_{xx}$ & $u_x, u_{xx}$ \\
5\%    & $u_x, u_{xx}$ & $u_x, u_{xx}$ & $u_x, u_{xx}$ & $u_x, u_{xx}$  \\
8\%    & $u_x, u_{xx}$ & $\geq 4\ \text{terms}$ & $u_x, u_{xxxx}$ & $u_x, u_{xxxx}$  \\
10\%   & $u_x, u_{xx}$ & $\geq 4\ \text{terms}$ & $\geq 4\ \text{terms}$ & $\geq 4\ \text{terms}$  \\
\bottomrule
\end{tabular}
\vspace{0.5cm}

{\centering \bfseries (b) KdV Equation \par}
\begin{tabular}{ccccc}
\toprule
Noise level & WG-IDENT & GLASSO & SGTR & rSGTR \\
\midrule
No Noise & $uu_x, u_{xxx}$ & $3\ \text{terms}$ & $uu_x, u_{xxx}$ & $uu_x, u_{xxx}$ \\
0.1\%  & $uu_x, u_{xxx}$ & $3\ \text{terms}$ & $uu_x, u_{xxx}$ & $uu_x, u_{xxx}$ \\
0.5\%  & $uu_x, u_{xxx}$ & $3\ \text{terms}$ & $\geq 4\ \text{terms}$ & $\geq 4\ \text{terms}$ \\
1\%    & $uu_x, u_{xxx}$ & $3\ \text{terms}$ & $\geq 4\ \text{terms}$ & $\geq 4\ \text{terms}$ \\
5\%    & $uu_x, u_{xxx}$ & $3\ \text{terms}$ & $\geq 4\ \text{terms}$ & $\geq 4\ \text{terms}$  \\
8\%    & $uu_x, u_{xxx}$ & $3\ \text{terms}$ & $\geq 4\ \text{terms}$ & $\geq 4\ \text{terms}$  \\
10\%   & $uu_x, u_{xxx}$ & $4\ \text{terms}$ & $\geq 4\ \text{terms}$ & $\geq 4\ \text{terms}$  \\
\bottomrule
\end{tabular}
\vspace{0.5cm}

{\centering \bfseries (c) KS Equation \par}
\begin{tabular}{ccccc}
\toprule
Noise level & WG-IDENT & GLASSO & SGTR & rSGTR \\
\midrule
No Noise & $uu_x, u_{xx}, u_{xxxx}$ & $\geq 4\ \text{terms}$ & $uu_x, u_{xx}, u_{xxxx}$ & $uu_x, u_{xx}, u_{xxxx}$ \\
0.1\%  & $uu_x, u_{xx}, u_{xxxx}$ & $\geq 4\ \text{terms}$ & $\geq 4\ \text{terms}$ & $\geq 4\ \text{terms}$ \\
0.5\%  & $uu_x, u_{xx}, u_{xxxx}$ & $\geq 4\ \text{terms}$ & $\geq 4\ \text{terms}$ & $\geq 4\ \text{terms}$ \\
1\%    & $uu_x, u_{xx}, u_{xxxx}$ & $\geq 4\ \text{terms}$ & $\geq 4\ \text{terms}$ & $\geq 4\ \text{terms}$ \\
5\%    & $uu_x, u_{xx}, u_{xxxx}$ & $\geq 4\ \text{terms}$ & $\geq 4\ \text{terms}$ & $\geq 4\ \text{terms}$  \\
8\%    & $uu_x, u_{xx}, u_{xxxx}$ & $\geq 4\ \text{terms}$ & $\geq 4\ \text{terms}$ & $\geq 4\ \text{terms}$  \\
10\%   & $uu_x, u_{xx}, u_{xxxx}$ & $\geq 4\ \text{terms}$ & $\geq 4\ \text{terms}$ & $\geq 4\ \text{terms}$  \\
\bottomrule
\end{tabular}
\caption{Comparison of results for the rest of the PDEs: (a) advection-diffusion equation, (b) KdV equation, and (c) KS equation. The table presents the identified features by our method, GLASSO, SGTR, and rSGTR across various noise levels. Our method consistently identifies the correct features at all noise levels.}
\label{tab:AdvectionKdVKS_eq}
\end{table}

\end{document}